\newbox\noforkbox \newdimen\forklinewidth
\noforkbox\hbox{\lower 2pt\box1\lower 2pt\box0\relax}
\def\unionstick{\mathop{\copy\noforkbox}\limits}
\def\nonfork_#1{\unionstick_{\textstyle #1}}
\newbox\doesforkbox
\doesforkbox\hbox{\lower 2pt\box1 \lower 2pt\box2\lower2pt\box0\relax}
\def\nunionstick{\mathop{\copy\doesforkbox}\limits}
\def\fork_#1{\nunionstick_{\textstyle #1}}
\newcommand{\dnf}{\unionstick}
\newcommand{\s}{\mathfrak{s}}
\newcommand{\nf}{\unionstick}
\newcommand{\dnfb}[4]{#2 \overset{#4}{\underset{#1}{\overline{\nf}}} #3}
\newtheorem{same}{This should never appear}[section]
\newtheorem{defin}[same]{Definition}
\newtheorem{remark}[same]{Remark}
\newtheorem{theorem}[same]{Theorem}
\newtheorem{example}[same]{Example}
\newtheorem{lemma}[same]{Lemma}
\newtheorem{fact}[same]{Fact}
\newtheorem{question}[same]{Question}
\newtheorem{cor}[same]{Corollary}
\newtheorem{prop}[same]{Proposition}
\newtheorem{hypothesis}[same]{Hypothesis}
\newtheorem{nota}[same]{Notation}
\newtheorem*{theorem1}{Theorem 4.8}
\newtheorem*{theorem2}{Theorem 5.11}
\newtheorem{defin*}{Definition}
\newtheorem*{theorem*}{Theorem}
\newcommand{\skipitems}[1]{%
  \addtocounter{\@enumctr}{#1}%
}
\newcommand{\rest}{\mathord{\upharpoonright}}
\newcommand{\id}{\textrm{id}}
\newcommand{\Kk}{\mathbf{K}}
\newcommand{\M}{\mathcal{M}}
\newcommand{\K}{\mathcal{K}}
\newcommand{\km}{\mathcal{K}_\mathcal{M}}
\newcommand{\LS}{\operatorname{LS}}
\newcommand{\leap}[1]{\le_{#1}}
\newcommand{\lea}{\leap{\Kk}}
\newcommand\cm{\mathcal {M}}
\title{Relative injective modules, superstability and noetherian categories}
\author{Marcos Mazari-Armida}
\thanks{The first author's research was partially supported by an AMS-Simons Travel Grant 2022--2024 and by an NSF grant DMS-2348881.}
\address{\newline Marcos Mazari-Armida \newline Department of Mathematics \newline Baylor University \newline Waco, Texas, USA}
\urladdr{https://sites.baylor.edu/marcos\_mazari/}
\email{marcos\_mazari@baylor.edu}
\author[ Ji\v{r}\'{\i} Rosick\'{y}]
{ Ji\v{r}\'{\i} Rosick\'{y}}
\thanks{The second author's research was supported by the Grant Agency of the Czech Republic under the grant 22-02964S} 
\address{
\newline  Ji\v{r}\'{\i} Rosick\'{y}\newline
Department of Mathematics and Statistics,\newline
Masaryk University, Faculty of Sciences,\newline
Kotl\'{a}\v{r}sk\'{a} 2, 611 37 Brno,\newline
Czech Republic}
\email{rosicky@math.muni.cz}
\begin{document}

\begin{abstract} 
We study classes of modules closed under direct sums, $\M$-submodules and $\M$-epimorphic images where $\M$ is either the class of embeddings, $RD$-embeddings or pure embeddings.

 We show that the $\M$-injective modules of theses classes satisfy a Baer-like criterion. 
In particular, injective modules, $RD$-injective modules, pure injective modules, flat cotorsion modules and $\s$-torsion pure injective modules satisfy this criterion. 
The argument presented is a model theoretic one. We use in an essential way stable independence relations which generalize Shelah's non-forking  to abstract elementary classes.

We show that the classical model theoretic notion of superstability is equivalent to the algebraic notion of a noetherian category for these classes. We use this equivalence to characterize noetherian rings, pure semisimple rings, perfect rings and finite products of finite rings and artinian valuation rings via superstability.

\end{abstract}


\maketitle

{\let\thefootnote\relax\footnote{{AMS 2020 Subject Classification:
Primary:  03C48, 13L05. Secondary: 03C45, 03C60, 16D50, 16B70, 16D10, 18G05.

Key words and phrases. Relative Injective Modules; Baer Criterion; Stable independence relation; Superstability; Noetherian categories; Abstract Elementary
Classes.}}}

\tableofcontents

\section{Introduction}

Injective modules and their generalizations play a key role in homological algebra, commutative algebra, ring theory, module theory and the model theory of modules. 
Recall that a module $E$ is injective if for every $f: A \to B$ an embedding and $g: A \to E$ a homomorphism, there is $h: B \to E$ a homomorphism such that $h \circ f = g$. 

A fundamental result regarding injective modules is \emph{Baer's Criterion} \cite{baer} which states that to test for injectivity it is enough to consider embeddings $f: I \to R$ where $I$ is a (left) ideal of the ring $R$ instead of considering \emph{all} embeddings $f: A \to B$.

The main result of the first half of the paper is that there is a Baer-like criterion for relative injective modules.   More precisely, we will study relative injective modules for pairs  $(K, \M)$ such that $K$ is a class of $R$-modules for a fixed ring $R$, $\M$ is either the class of embeddings, $RD$-embeddings or pure embeddings and $K$ is closed under: direct sums, $\M$-submodules and $\M$-epimorphic images. For the rest of the introduction we will refer to this framework as the \emph{main assumption}.

We say that  $E$ is a $\M$-injective module of $K$ if and only if $E \in K$ and for every  $f: A \to B$ a $\M$-embedding with $A, B \in K$ and $g: A \to E$ a homomorphism there is $h : B \to E$ a homomorphism such that $h \circ f = g$. We obtain the following Baer-like criterion  for pairs  $(K, \M)$ satisfying the main assumption. 

\begin{theorem1}  $E$ is a $\M$-injective module of $K$ if and only if for every  $f: A \to B$ a  $\M$-embedding with $A, B \in K$ and $\|A\|, \| B \| \leq  \text{card}(R) + \aleph_0$, and $g: A \to E$ a $R$-homomorphism,  there is $h : B \to E$ a $R$-homomorphism such that $h \circ f = g$. 
\end{theorem1}

The framework of the theorem is very general (see Example \ref{ex1}). In particular it applies to  injective modules, $RD$-injective modules, pure injective modules, flat cotorsion modules and $\s$-torsion pure injective modules. Theorem \ref{s-inj} had already been obtained for injective modules \cite{baer}, pure injective modules \cite[1.3]{satr} and flat cotorsion modules \cite[1.1]{satr}, but it is new for $RD$-injective modules and $\s$-torsion pure injective modules. Moreover, the methods used to obtain the previously known results are different for each class of modules.  For instance, the standard proof for flat cotorsion modules uses that the class of flat modules is $(\text{card}(R) + \aleph_0)^+$-deconstructible which is a deep result in module theory (see for example \cite[6.17]{gotr}).


The argument presented to show Theorem \ref{s-inj} is a model theoretic one. We use in an essential way stable independence relations which generalize the central model theoretic notion of Shelah's non-forking which in turn generalizes linear and algebraic independence. More precisely,  we show that if a pair $(K, \M)$ satisfies the main assumption, then the abstract elementary class obtained by  taking $K$ with $\M$-embeddings  has a stable independence relation with the $(<\aleph_0)$-witness property (see Theorem \ref{s-ind}).  We use the stable independence relation to decompose any $\M$-embeddings into smaller \emph{independent pieces} (see Lemma \ref{deco}). Then we use the independence of the smaller pieces together with the closure of the class under pushouts to extend the original embedding. This last argument is similar to \cite[3.1]{lrvcell}.

The second part of the paper characterizes the classical model theoretic notion of superstability via noetherian categories.  An abstract elementary class is \emph{superstable} if it has a unique limit model up to isomorphisms in a tail of cardinals. Intuitively a \emph{limit model} is a universal model with some level of homogeneity.  We say that a pair $(K, \M) $ is \emph{noetherian} if every direct sum of $\M$-injective modules of $K$  is a $\M$-injective module of $K$. This generalizes the notion of a noetherian ring by a classical result of Bass-Papp (see for example \cite[3.46]{lam2}) and was already considered in the seventies \cite[p. 123]{ste}.

The main result of the second part of the paper is that the following result holds for pairs  $(K, \M)$ satisfying the main assumption. 

\begin{theorem2} The following are equivalent.
\begin{enumerate}
\item $(K, \M)$ is noetherian.
\item The abstract elementary class obtained by  taking $K$ with $\M$-embeddings is superstable.
\end{enumerate}
\end{theorem2} 

Similar results have been obtained for certain classes of modules with embeddings \cite[\S 3]{maz1} and with pure embeddings \cite[\S 4]{maz1}, \cite[\S 3]{m3} \cite[\S 3]{maz2}, \cite[\S 4]{m-tor}, but the result is new in this generality even when $\M$ is the class of pure embeddings and for the specific case of $R$-modules with $RD$-embeddings.

The proof of the previous theorem is similar to that of \cite[\S 3]{m3} and \cite[\S 3]{maz2}, but there were two crucial difficulties that we needed to overcome to generalize the argument. The first was to show that \emph{long} limit models are relative injective modules without a syntactic characterization of the relative injective modules (see Lemma \ref{sat} and Proposition \ref{lim-inj}). We were able to overcome this obstacle using the Baer-like criterion of the first part of the paper. The second was to show that relative injective modules satisfy a Schr\"{o}der-Bernstein type property without the existence of relative injective envelopes (see Lemma \ref{csb}) . We were able to overcome this obstacle by assuming that $(K, \M)$ is noetherian. It is open whether the result is true without this assumption (see Question \ref{q3}).

We use Theorem \ref{equiv} to characterize classical classes of rings such as noetherian rings, pure semisimple rings, perfect rings and finite products of finite rings and artinian valuation rings. Most of these characterizations had already been obtained in \cite{maz1}, \cite{m3}, \cite{maz2} but the case of finite products of finite rings and artinian valuation rings is new (see Corollary \ref{rd-ring}).

Another result of the paper is that $RD$-embeddings are cofibrantly generated in the class of $R$-modules, i.e., they are generated from a set of morphisms by pushouts, transfinite composition and retracts (see Corollary \ref{rd-co}). The result follows from the existence of a stable independence relation and \cite[3.1]{lrvcell}. The result for pure embeddings was just recently obtained in \cite{lprv}.

The paper is organized as follows. Section 2 presents the main framework of the paper together with some basic results and background material on abstract elementary classes. Section 3  shows the existence of a stable independence relation for classes satisfying the main assumption of the paper. Section 4 uses the stable independence relation of Section 3 to show a Baer-like criterion for relative injective modules. Section 5 presents the equivalence between superstable AECs and noetherian categories for classes satisfying our main assumption and uses this equivalence to characterize some classical rings. Furthermore in Section 5, we provide a couple of extensions to the equivalence between superstable AECs and noetherian categories. 

We would like to thank Ivo Herzog for pointing out Example \ref{ex1}.(6) and Wentao Yang for comments that helped improved the paper. We are grateful to the referee for many comments that significantly improved the presentation of the paper.

\section{Basic results and preliminaries}

We present the main framework of the paper. We recall some notions concerning abstract elementary classes that are used in this paper. The proof of the Baer-like criterion for relative injective modules only uses  basic notions (up to Proposition \ref{ap}). 

\subsection{Main framework} All rings considered in this paper are associative with unity and all modules are left $R$-modules unless specified otherwise. We denote the class of all embeddings by $Emb$ and we write $A \leq_{Emb} B$ if $A$ is a submodule of $B$. 

An exact sequence of modules 

\[
\xymatrix{
  0 \ar[r] & A \ar[r]^-{f} & B \ar[r]^-{g} & C \ar[r] & 0
}
\]
is \emph{pure-exact} if  every system of linear equations with parameters in $f[A]$ which has a solution in $B$ has a solution in $f[A]$. In this case we say that $f$ is a \emph{pure embedding} and $g$ is a \emph{pure epimorphism}. We will denote the class of all pure embeddings by \textit{Pure}.  If $f$ is the inclusion, we say that $A$ is a \emph{pure submodule} of $B$ and denote it by $A \leq_{Pure} B$.

An exact  sequence of modules is \emph{RD-exact} if $f[A] \cap rB = rf[A]$ for every $r \in R$. In that case we say that $f$ is a \emph{$RD$-embedding} and $g$ is a \emph{$RD$-epimorphism}. We will denote the class of $RD$-embeddings by \textit{RD}.  If $f$ is the inclusion, we say that $A$ is a \emph{$RD$-submodule} of $B$ and denote it by $A \leq_{RD} B$.

 It is clear that pure embeddings are $RD$-embedding, but the other inclusion might fail. The rings where both notions coincide are called $RD$-rings \cite{ppr}.
 
 We introduce the framework of the paper.

\begin{hypothesis}\label{hyp2} Let $(K, \M)$ be a pair such that:
\begin{enumerate}
\item $K$ is a class of $R$-modules for a fixed ring $R$, 
\item $\M$ is either the class of embeddings, $RD$-embeddings or pure embeddings, and 
\item $K$ is closed under:
\begin{enumerate}
\item direct sums, 
\item $\M$-submodules, i.e., if $A \in K$ and $B \leq_\M A$, then $B \in K$, and 
\item $\M$-epimorphic images, i.e., if $f: A \to B$ is a $\M$-epimorphism and $A \in K$, then $B \in K$. 
\end{enumerate} 
\end{enumerate} 
\end{hypothesis}

\begin{remark}
Most of the main results of the paper assume the above hypothesis, but we will explicitly mention when it is assumed.
\end{remark}

It is clear that if $(K, Emb)$ satisfies Hypothesis \ref{hyp2}, then $(K, RD)$ satisfies Hypothesis \ref{hyp2}, and if  $(K, RD)$ satisfies Hypothesis \ref{hyp2}, then $(K, Pure)$ satisfies Hypothesis \ref{hyp2}. The other inclusions might fail. An easy example is that torsion-free abelian groups with $RD$-embeddings (pure embeddings) satisfy  Hypothesis \ref{hyp2}, but  torsion-free abelian groups with embeddings do not satisfy it.

We give some examples of classes satisfying Hypothesis \ref{hyp2}.

\begin{example}\label{ex1}\
\begin{enumerate}
\item The class of all modules with embeddings, $RD$-embeddings or pure embeddings. 
\item The class of torsion abelian groups with embeddings, $RD$-embeddings or pure embeddings. 
\item An $F$-class (in the sense of \cite[\S 2]{hero}) with pure embeddings. Some examples of $F$-classes are: the class of all $R$-modules,  flat $R$-modules and $\s$-torsion modules. Additional examples are given in \cite[\S 2]{hero}. 
\item $Ker(Tor_1(\mathcal{B}, -))$ with pure embeddings for  $\mathcal{B}$ a class of right $R$-modules \cite[4.3]{hojo}.
\item Let $R$ be a left coherent ring.  $Ker(Ext^1(\mathcal{B}, -))$  with pure embeddings  for  $\mathcal{B}$ a class of finitely presented left $R$-modules \cite[4.3]{hojo}. 
\item  $\sigma[A]$  with embeddings where $A$ is a module and $\sigma[A]$ is the full subcategory of the category of modules whose objects are all modules subgenerated by $A$ (see \cite[\S 15]{wis}).  
\end{enumerate}
\end{example}

Throughout the paper we will use the following basic notions from the  model theory of modules.  A formula $\varphi(\bar{x})$ is a positive primitive formula ($pp$-formula for short) provided it is equivalent,  relative to the theory of (left) $R$-modules, to a formula of the form:

 \[ \exists w_1,  \ldots, \exists w_l \; \bigwedge_{j=1}^{m} \; \left(
\sum_{i=1}^{l} r_{j,i} w_i + \sum_{k=1}^{n} s_{j,k}x_{k} = 0 \right)\] for  $r_{j,i}, s_{j,k} \in R$ for every $i \in \{ 1,..., l\}, j \in \{1,...,m\}, k\in \{1,...,n\}$. It is known, see for example \cite[2.1.6]{prest09}, that $A \leq_{Pure} B$ if and only if $A$ is a submodule of $B$ and for every $a \in A$ and $\varphi(x)$ a $pp$-formula, if $B \vDash \varphi(a)$, then $A \vDash \varphi(a)$. 

A result we will often use is that if $f: A \to B$ is a $R$-homomorphism, $a \in A$ and $\varphi(x)$ is a $pp$-formula such that $A \vDash \varphi(a)$, then $B \vDash \varphi(f(a))$ (see for example \cite[2.7]{prest}). 

 A formula $\varphi(x)$ is a $RD$-formula if it is of the form $\exists w ( rw =x)$ for $r \in R$. It is clear that $A \leq_{RD} B$ if and only if $A$ is a submodule of $B$ and for every $a \in A$ and $\varphi(x)$ a $RD$-formula, if $B \vDash \varphi(a)$, then $A \models \varphi(a)$. 
 
 Recall that an embedding  $f: A \to B$ \emph{splits} if there is $\pi: B \to A$ such that $\pi \circ f = \id_A$. In our setting,  an embedding  $f: A \to B$ splits if and only if  $f[A]$ is a direct summand of $B$, i.e., there is a module $C$ such that $B =  f[A] \oplus C$.

\begin{prop}\label{v-basic}  Assume $(K, \M)$ satisfy Hypothesis \ref{hyp2}. 
\begin{enumerate}
\item  $K$ is closed under direct summands.
\item Split embeddings are contained in $\M$, i.e.,  if $A \in K$ is a direct summand of $B\in K$, then $A \leq_\M B$.
\item 
If $A \leq_\M C$ and $B \leq_\M D$, then $A \oplus B \leq_\M C \oplus D$.
\end{enumerate}
\end{prop}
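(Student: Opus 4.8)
The plan is to reduce all three parts to a single observation about retractions, combined with the preservation of positive primitive formulas under homomorphisms recorded just before the statement, and to treat the case $\M = Emb$ as trivial throughout.

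First I would record the key fact: if $A$ is a direct summand of $B$, say $B = A \oplus C$, then the inclusion $A \hookrightarrow B$ is a pure embedding. Let $\pi : B \to A$ be the projection; it is an $R$-homomorphism with $\pi \rest A = \id_A$. Given $a \in A$ and a $pp$-formula $\varphi(x)$ with $B \vDash \varphi(a)$, preservation of $pp$-formulas under $\pi$ yields $A \vDash \varphi(\pi(a)) = \varphi(a)$, which is exactly purity of the inclusion. Since pure embeddings are $RD$-embeddings and every inclusion is an embedding, the inclusion $A \hookrightarrow B$ lies in $\M$ for each of the three admissible choices of $\M$. This immediately gives (2): if $A \in K$ is a direct summand of $B \in K$, then $A \leq_\M B$. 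It also gives (1): for a direct summand $A$ of $B \in K$, the inclusion shows $A \leq_\M B$, so closure of $K$ under $\M$-submodules (Hypothesis \ref{hyp2}(3)(b)) forces $A \in K$; note this uses only $B \in K$. Alternatively, the complementary projection $B \to A$ is a split, hence pure, epimorphism, so $A$ is an $\M$-epimorphic image of $B$.

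For (3), the case $\M = Emb$ is clear. For $\M = Pure$, I would use that $pp$-definable subgroups commute with finite direct sums, i.e.\ for a $pp$-formula $\varphi(\bar x)$ one has $C \oplus D \vDash \varphi(\bar c, \bar d)$ if and only if $C \vDash \varphi(\bar c)$ and $D \vDash \varphi(\bar d)$. Thus if $C \oplus D \vDash \varphi(\bar a, \bar b)$ with $(\bar a, \bar b) \in (A \oplus B)^n$, then $C \vDash \varphi(\bar a)$ and $D \vDash \varphi(\bar b)$; purity of $A \leq_{Pure} C$ and $B \leq_{Pure} D$ gives $A \vDash \varphi(\bar a)$ and $B \vDash \varphi(\bar b)$, and reassembling yields $A \oplus B \vDash \varphi(\bar a, \bar b)$, so $A \oplus B \leq_{Pure} C \oplus D$. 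For $\M = RD$ the same argument runs explicitly with $\varphi(x) = \exists w(rw = x)$: a witness $(c,d)$ in $C \oplus D$ for $r(c,d) = (a,b)$ splits into witnesses $c \in C$ and $d \in D$, $RD$-purity of the two factors supplies witnesses $a' \in A$ and $b' \in B$, and $(a',b')$ witnesses the formula in $A \oplus B$.

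The arguments are elementary, and I expect no genuine obstacle: the only real content is the behaviour of $pp$- and $RD$-formulas under the projection $\pi$ and under finite direct sums, which is precisely where the restriction of $\M$ to $Emb$, $RD$ or $Pure$ is used. The main point is simply to organize the three cases uniformly so that the single preservation fact does all the work.
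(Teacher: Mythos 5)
Your proof is correct and takes essentially the same approach as the paper: the paper likewise dismisses (1) and (2) as clear (your retraction argument is the standard justification) and proves (3) by projecting onto the factors, applying purity or $RD$-purity in each factor, and reassembling, with the $RD$ case handled by exactly your explicit-witness computation. The only cosmetic difference is in the $Pure$ case of (3), where the paper reassembles via the fact that $\varphi(A \oplus B)$ is a subgroup of $A\oplus B$ (deducing $\varphi((a,0))$ and $\varphi((0,b))$ separately and adding), whereas you invoke the equally standard fact that $pp$-formulas commute with finite direct sums; both close the argument in one line.
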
 
\begin{proof}
(1) and (2) are clear, so we only show (3). We show first the case when $\M= RD$. It is clear that $A \oplus B$ is a submodule of   $C \oplus D$. Let $r \in R$, $(a, b)  \in A \oplus B$ and $(c, d) \in C \oplus D$ such that $(a, b) = r(c,d)$. Projecting onto $C$  we get that $a = rc$ and since $A \leq_{RD} C$ there is $a' \in A$ such that $a = ra'$. Similarly there is $b' \in B$ such that $b = rb'$. Hence $(a, b) = r(a', b') \in r(A \oplus B)$. Therefore, $A \oplus B \leq_{RD} C \oplus D$.

We turn to the case when $\M=Pure$. The proof is similar to the one above but we provide some of the details. Let $\varphi(x)$ be a $pp$-formula and $(a, b)  \in A \oplus B$ such that $C \oplus D \vDash \varphi((a,b))$. Projecting onto $C$, we get that $C \vDash \varphi(a)$. As  $A \leq_{Pure} C$, $A \vDash \varphi(a)$. Hence $A \oplus B \vDash \varphi( (a, 0))$. Similarly, one can show that $A \oplus B \vDash \varphi( (0, b))$. Since $\varphi(A \oplus B )$ is a subgroup of $A \oplus B$ (see for example \cite[2.2]{prest}), $A \oplus B \vDash \varphi((a,b))$. Therefore, $A \oplus B \leq_{Pure} C \oplus D$. \end{proof}

\begin{remark} Recall that the pushout of a pair of morphisms $(f_1: M \to N_1, f_2: M \to N_2)$ in the category of $R$-modules with $R$-homomorphisms is given by: 

 $$
  \xymatrix{
    N_1 \ar[r]^{\bar{f_2}} & P  \\
    M  \ar [u]^{f_1} \ar[r]_{f_2}  & N_2\ar[u]_{\bar{f_1}}
  }
$$where  $P= (N_1 \oplus N_2)/ \{ (f_1(m), -f_2(m)) : m \in M\} $, $ \bar{f_1} : n_2 \mapsto [(0, n_2)]_P$ and $\bar{f_2}: n_1 \mapsto [(n_1, 0)]_P$.

Moreover,  for every $(Q, h_1, h_2)$ such that $h_1 \circ f_1 = h_2 \circ f_2$, the unique $t: P \to Q$ such that $t \circ \bar{f_2} = h_1$ and $t \circ \bar{f_1} = h_2$ is given by $t([(n_1, n_2)]_P)= h_1(n_1) + h_2(n_2)$.   


\end{remark}

\begin{lemma}\label{push}
Assume $(K, \M)$ satisfy Hypothesis \ref{hyp2}. 
If $M,N_1, N_2 \in K$, $f_1: M \to N_1$ is a $\M$-embedding, $f_2: M \to N_2$ is a $R$-homomorphism and $(P, \bar{f_1}: N_2 \to P, \bar{f_2}: N_1 \to P)$ is the pushout of $(f_1, f_2)$ in the category of $R$-modules with $R$-homomorphisms, then $P \in K$ and $\bar{f_1}$ is a $\M$-embedding. Moreover if $f_2$ is also a $\M$-embedding, then  $\bar{f_2}$ is a $\M$-embedding.
\end{lemma}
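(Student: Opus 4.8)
The plan is to exploit the explicit description of the pushout $P = (N_1 \oplus N_2)/H$ with $H = \{(f_1(m), -f_2(m)) : m \in M\}$ recorded in the Remark preceding the statement, and to reduce everything to the two facts already in play: $pp$-formulas and $RD$-formulas are preserved by arbitrary $R$-homomorphisms, and they are reflected by $\M$-embeddings.

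First I would establish that $P \in K$. Consider the short exact sequence
\[
0 \to M \xrightarrow{\iota} N_1 \oplus N_2 \xrightarrow{q} P \to 0,
\]
where $\iota(m) = (f_1(m), -f_2(m))$, which is injective since $f_1$ is, and $q$ is the canonical quotient with kernel $H = \iota[M]$. I claim $\iota$ is a $\M$-embedding. Writing $\pi_1 : N_1 \oplus N_2 \to N_1$ for the projection, we have $\pi_1 \circ \iota = f_1$; given a relevant formula $\varphi$ ($pp$ if $\M = Pure$, an $RD$-formula if $\M = RD$, and nothing to check if $\M = Emb$) and a tuple $\bar m$ from $M$ with $N_1 \oplus N_2 \models \varphi(\iota(\bar m))$, applying the homomorphism $\pi_1$ gives $N_1 \models \varphi(f_1(\bar m))$, and reflecting through the $\M$-embedding $f_1$ yields $M \models \varphi(\bar m)$. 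Hence $\iota$ is a $\M$-embedding, so its image $H$ satisfies $H \leq_\M N_1 \oplus N_2$ and $q$ is a $\M$-epimorphism. Since $N_1 \oplus N_2 \in K$ by closure under direct sums (Hypothesis \ref{hyp2}), closure under $\M$-epimorphic images gives $P \in K$.

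Next I would show $\bar{f_1} : N_2 \to P$, $n_2 \mapsto [(0, n_2)]_P$, is a $\M$-embedding. Injectivity is immediate: $[(0, n_2)]_P = 0$ means $(0, n_2) \in H$, forcing $m = 0$ because $f_1$ is injective, and hence $n_2 = 0$. For the reflection of formulas, suppose $P \models \varphi(\bar{f_1}(\bar n_2))$ for a relevant formula $\varphi(\bar x) = \exists \bar w \bigwedge_j(\sum_i r_{ij}w_i + \sum_k s_{kj}x_k = 0)$ and a tuple $\bar n_2$ from $N_2$. Unwinding the definition of $P$, there are witnesses $(\bar a_1, \bar a_2)$ in $N_1 \oplus N_2$ and elements $m_j \in M$ such that, for each $j$, one has $\sum_i r_{ij}a_{1,i} = f_1(m_j)$ in $N_1$ and $\sum_i r_{ij}a_{2,i} + \sum_k s_{kj}n_{2,k} = -f_2(m_j)$ in $N_2$. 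The first family says $N_1 \models \theta(f_1(\bar m))$, where $\theta(\bar y) = \exists \bar w \bigwedge_j (\sum_i r_{ij}w_i - y_j = 0)$ is again a relevant formula; reflecting through $f_1$ produces a tuple $\bar c$ in $M$ with $\sum_i r_{ij}c_i = m_j$ for all $j$. Setting $b_{2,i} := a_{2,i} + f_2(c_i)$ and using $R$-linearity of $f_2$, the second family collapses to $\sum_i r_{ij}b_{2,i} + \sum_k s_{kj}n_{2,k} = -f_2(m_j) + f_2(\sum_i r_{ij}c_i) = 0$, so $\bar b_2$ witnesses $N_2 \models \varphi(\bar n_2)$. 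For $\M = RD$ this is the special case of a single $RD$-formula, and for $\M = Emb$ only injectivity is required.

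Finally, the \emph{moreover} clause follows by symmetry of the pushout: the triple $(P, \bar{f_1}, \bar{f_2})$ is equally the pushout of the pair $(f_2, f_1)$ with the two legs interchanged, so applying the part already proven to this presentation, in which $f_2$ now plays the role of the distinguished $\M$-embedding and $N_1, N_2$ switch roles, shows $\bar{f_2}$ is a $\M$-embedding whenever $f_2$ is. I expect the only genuine work to be the formula-reflection step for $\bar{f_1}$; the crucial device there is the witness correction $\bar b_2 = \bar a_2 + f_2(\bar c)$, which absorbs the error terms $-f_2(m_j)$ using the preimages $\bar c$ extracted from the $\M$-embedding property of $f_1$.
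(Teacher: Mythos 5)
Your proof is correct and takes essentially the same route as the paper's: you work with the explicit pushout $P=(N_1\oplus N_2)/H$, show $H=\{(f_1(m),-f_2(m)): m\in M\}\leq_\M N_1\oplus N_2$ by projecting onto $N_1$, reflecting through the $\M$-embedding $f_1$, and transporting back along the isomorphism $M\cong H$ (so $P\in K$ by closure under direct sums and $\M$-epimorphic images), and you verify reflection of formulas along $\bar{f_1}$ by the same witness-correction device ($b_{2,i}=a_{2,i}+f_2(c_i)$) that the paper's $RD$-computation uses with $m'$. The only cosmetic differences are that you carry out the Pure case for $\bar{f_1}$ explicitly where the paper cites \cite[2.1.13]{prest09}, and you derive the \emph{moreover} clause from the symmetry of the pushout where the paper says the argument is ``similar.''
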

\begin{proof}  The key to show that $P \in K$ is to show that the canonical epimorphism  $\pi: N_1 \oplus N_2 \to (N_1 \oplus N_2)/ \{ (f_1(m), -f_2(m)) : m \in M\}$ is a $\M$-epimorphism. We show the case when $\M = Pure$; the case when $\M = RD$ is similar. We show that  $\{ (f_1(m), -f_2(m)) : m \in M\} \leq_{Pure} N_1 \oplus N_2$. Let $\varphi(x)$ be a $pp$-formula such that $N_1 \oplus N_2 \vDash  \varphi(  (f_1(m_0), -f_2(m_0))$ for $m_0 \in M$. Projecting onto $N_1$ we get that $N_1 \vDash \varphi(f_1(m_0))$. Since $f_1: M \to N_1$ is a pure embedding, $M \vDash \varphi(m_0)$. Since $f_1, f_2$ are $R$-homomorphisms, $h: M \to  \{ (f_1(m), -f_2(m)) : m \in M\}$ given by $h(m)= (f_1(m), -f_2(m))$ is a $R$-homomorphism. Hence $\{ (f_1(m), -f_2(m)) : m \in M\} \vDash \varphi(  (f_1(m_0), -f_2(m_0))$. Therefore, $\pi$ is a pure epimorphism.

It is well-known that $\bar{f_1} \in \M$ for $\M = Emb$ and $\M =Pure$ (see for example \cite[2.1.13]{prest09}); so we only present the proof for $\M = RD$. Let  $(P= (N_1 \oplus N_2)/ \{ (f_1(m), -f_2(m)) : m \in M\} ,\; \bar{f_1} : n_2 \mapsto [(0, n_2)]_P  , \; \bar{f_2}: n_1 \mapsto [(n_1, 0)]_P )$ be the pushout of $(f_1, f_2)$ such that $f_1$ is a $RD$-embedding. We show that $\bar{f_1}$ is a $RD$-embedding.

 Let $r \in R$, $[(0, n_2)]_P \in \bar{f_1}[N_2]$ and $[(\ell_1, \ell_2)]_P \in P$ such that $[(0, n_2)]_P = r[(\ell_1, \ell_2)]_P$. Then there is an $m \in M$ such that $r \ell_1 = f_1(m)$ and $r\ell_2  - n_2 = -f_2(m)$. Since $r \ell_1 = f_1(m)$ and $f_1: M \to N_1$ is a $RD$-embedding there is $m' \in M$ such that $r m' = m$. It is easy to show that $r[(0, \ell_2 + f_2(m'))]_P = [(0, n_2)]_P$. Hence $[(0, n_2)]_P \in r \bar{f_1}[N_2]$. Therefore, $\bar{f_1}$ is a $RD$-embedding. The proof that  $\bar{f_2}$ is a $RD$-embedding if $f_2$ is a $RD$-embedding is similar. \end{proof}

\subsection{Abstract elementary classes} Abstract elementary classes (AECs for short) were introduced by Shelah \cite{sh88}. An AEC is a pair $\Kk=(K \lea)$ where $K$ is a class of structures in a fixed language\footnote{In this paper, the language will always be $\{0, +,-\} \cup \{ r\cdot   : r \in R \}$ where $R$ is a fixed ring and $r \cdot$ is interpreted as multiplication by $r$ for each $r \in R$.} and $\lea$ is a partial order on $K$ extending the substructure relation such that $\Kk$ is closed under isomorphisms and satisfies the  coherence property, the L\"{o}wenheim-Skolem-Tarski axiom and the Tarski-Vaught axioms. The L\"{o}wenheim-Skolem-Tarski axiom is an an instance of the Downward L\"{o}wenheim-Skolem theorem and the Tarski-Vaught axioms assure us that the class is closed under directed colimits. The reader can consult the definition in \cite[4.1]{baldwinbook09}.

\begin{lemma}\label{aec} 
If $(K, \M)$ satisfies Hypothesis \ref{hyp2}, then $\km:=(K, \leq_\M)$ is an abstract elementary class with $\LS(\km) = \text{card}(R)  +\aleph_0$. 
\end{lemma}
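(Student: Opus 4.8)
The plan is to verify the defining axioms of an abstract elementary class for $\km=(K,\leq_\M)$ one by one, throughout exploiting the syntactic characterizations of $\leq_{Pure}$ and $\leq_{RD}$ by $pp$-formulas and $RD$-formulas recalled above, and then to pin down the L\"{o}wenheim--Skolem--Tarski number by a closure construction. Since $R$-modules are structures in the fixed language $\{0,+,-\}\cup\{r\cdot:r\in R\}$, the framework requirement that $K$ consist of structures in a fixed language is automatic.

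First I would dispatch the routine axioms. Closure of $K$ under isomorphism holds because an isomorphism is simultaneously an $\M$-embedding and a surjection, so every isomorphic copy of a member of $K$ is an $\M$-epimorphic image of it. That $\leq_\M$ is a partial order extending the substructure relation reduces, for $\M=Pure,RD$, to transitivity of the pure (resp.\ $RD$) submodule relation, which is immediate from the formula characterizations; the case $\M=Emb$ is trivial. For coherence, suppose $A\leq_\M C$, $B\leq_\M C$ and $A\subseteq B\subseteq C$ as substructures: a $pp$- (resp.\ $RD$-) formula satisfied by a tuple from $A$ inside $B$ is satisfied inside $C$ because homomorphisms preserve such formulas, hence it is satisfied inside $A$ since $A\leq_\M C$, giving $A\leq_\M B$.

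The heart of the argument is the Tarski--Vaught part, i.e.\ closure under unions of $\leq_\M$-increasing chains $(M_i)_{i<\lambda}$ with directed union $M=\bigcup_{i<\lambda}M_i$. That each $M_j\leq_\M M$, and that smoothness holds (if every $M_i\leq_\M N\in K$ then $M\leq_\M N$), both follow from the finitary nature of the witnessing formulas: any $pp$- or $RD$-formula satisfied in $M$ has its finitely many witnesses already lying in some $M_k$, and one then transfers the information along the chain using that the transition maps are $\M$-embeddings. The genuinely non-trivial point is that $M\in K$. Here I would consider $S=\bigoplus_{i<\lambda}M_i$, which lies in $K$ by closure under direct sums, together with the canonical surjection $\pi\colon S\to M$, $(x_i)_i\mapsto\sum_i x_i$, and show that $\pi$ is an $\M$-epimorphism; closure of $K$ under $\M$-epimorphic images then yields $M\in K$. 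For $\M=Emb$ every surjection is an $Emb$-epimorphism, so $M\in K$ at once. For $\M=RD$ a direct computation works: given $\bar c=(c_i)_i\in\ker\pi\cap rS$ with $c_i=rd_i$, the element $s=\sum_i d_i$ satisfies $rs=0$, and redistributing by replacing one coordinate $d_{i_n}$ with $d_{i_n}-s$ produces $\bar e\in\ker\pi$ with $r\bar e=\bar c$, so $\ker\pi\leq_{RD}S$. For $\M=Pure$ I would instead invoke the characterization of pure epimorphisms by the lifting property against finitely presented modules: a map from a finitely presented module into the directed union $M$ factors through some $M_i$ and then lifts to $S$ along the inclusion of the $i$-th summand, so $\pi$ is a pure epimorphism.

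Finally, for the bound $\LS(\km)=\text{card}(R)+\aleph_0$, I would, given $M\in K$ and $A\subseteq M$, construct an $\M$-submodule containing $A$ of cardinality at most $\text{card}(A)+\text{card}(R)+\aleph_0$ by iterating a closure operation $\omega$ times: at each stage, for every finite tuple already chosen and every $pp$- (resp.\ $RD$-) formula solvable over that tuple in $M$ --- there being at most $\text{card}(R)+\aleph_0$ such formulas --- I would adjoin a solution. The union $N$ of the stages is then pure (resp.\ $RD$) in $M$ by construction, so $N\leq_\M M$, and $N\in K$ by closure under $\M$-submodules; for $\M=Emb$ one simply takes the generated submodule. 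The value is optimal since $R$ itself already has cardinality $\text{card}(R)$. I expect the main obstacle to be precisely the verification that $\pi$ is an $\M$-epimorphism in the pure and $RD$ cases, as this is where the algebraic content resides, whereas the remaining axioms amount to transferring $pp$-information along chains and a standard counting of formulas.
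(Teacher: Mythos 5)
Your proposal is correct and its key step is exactly the paper's: both realize the union $M=\bigcup_{i<\delta}M_i$ of a chain as the image of the canonical surjection $\pi\colon\bigoplus_{i<\delta}M_i\to M$ and then invoke closure of $K$ under $\M$-epimorphic images. The difference lies in how you verify that $\pi$ is an $\M$-epimorphism: the paper gives a single uniform syntactic argument showing $\pi$ is a \emph{pure} epimorphism via the $pp$-lifting characterization \cite[2.1.14]{prest09}, which settles all three cases at once (pure epimorphisms are $RD$-epimorphisms and epimorphisms) and moreover underlies the Remark following the lemma, namely that $(K,\leq_{Emb})$ and $(K,\leq_{RD})$ are also AECs when $(K, Pure)$ satisfies the hypothesis; your case-split loses this small bonus but is equally valid. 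Indeed, your lifting of maps from finitely presented modules through a directed union is precisely the categorical form of the paper's $pp$-computation, and your kernel computation showing $\ker\pi\leq_{RD}\bigoplus_{i}M_i$ is a nice direct alternative --- though note it requires choosing $i_n$ to be the \emph{largest} index in the support of $\bar d$, since otherwise $s$, and hence $d_{i_n}-s$, need not lie in $M_{i_n}$, so $\bar e$ need not belong to the direct sum.

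Two further small repairs. In the $RD$ case of your L\"{o}wenheim--Skolem construction you must also close under the module operations at each stage, since the $RD$-formulas $\exists w\,(rw=x)$ alone do not generate a submodule (for $Pure$ this is automatic, as the $pp$-formulas $w-\sum_k s_kx_k=0$ encode linear combinations); the paper avoids the case distinction entirely by closing any set to a \emph{pure} submodule of the right cardinality, which is an $\M$-submodule for each of the three choices of $\M$. Finally, your optimality remark ``$R$ itself already has cardinality $\mathrm{card}(R)$'' does not quite work, because $R$ need not belong to $K$ (consider torsion abelian groups); the lower bound $\LS(\km)\geq \mathrm{card}(R)+\aleph_0$ holds simply because, by convention, the L\"{o}wenheim--Skolem--Tarski number of an AEC is at least the cardinality of its language, which here is $\{0,+,-\}\cup\{r\cdot : r\in R\}$.
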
 
\begin{proof} 
The only two axioms that require an argument are the L\"{o}wenheim-Skolem-Tarski axiom and the Tarski-Vaught axioms. The fact that $\LS(\km) = \text{card}(R)  +\aleph_0$ follows from the assumption that $K$ is closed under $\M$-submodules and that every set $A$  can be extended to a pure submodule of cardinality bounded by $\text{card}(R) + \aleph_0 + |A|$ (see for example \cite[2.1.21]{prest09}).  Hence, the L\"{o}wenheim-Skolem-Tarski axiom holds.

  We show that the Tarski-Vaught axioms hold. Suppose $\delta$ is a limit ordinal and $\{ M_i \in K : i < \delta \}$ is an increasing chain. Then 
$\bigoplus_{i < \delta} M_i \in K$ as $K$ is closed under direct sums. Let $f: \bigoplus_{i < \delta} M_i \to \bigcup_{i < \delta} M_i$ be given by $(m_i)_{i <\delta} \mapsto \sum_{i< \delta} m_i$. We show $f$ is a pure epimorphism (even if $\M = Emb$ or $RD$).  We show that if $\varphi(x)$ is a $pp$-formula and $M_\delta =\bigcup_{i < \delta} M_i \vDash \varphi(m)$ for $m \in M_\delta$, then there is $n \in \bigoplus_{i < \delta} M_i
$ such that $f(n) = m$ and $\bigoplus_{i < \delta} M_i \vDash \varphi(n)$. This is enough by \cite[2.1.14]{prest09}.

Assume $\varphi(x) =  \exists \bar{w} \theta(\bar{w}, x)$ for $\theta(\bar{w}, x)$ a quantifier free formula and $M_\delta \vDash \varphi(m)$ for $m \in M_\delta$. Then there is $\bar{m}^* \in  M_\delta$ such that $M_\delta \vDash \theta(\bar{m}^*, m)$. As $ M_\delta =\bigcup_{i < \delta} M_i $, there is  $j < \delta$, such that $(\bar{m}^*,m) \in M_j$. Since $\theta(\bar{w}, x)$ is  a quantifier free formula, $M_j  \vDash \theta(\bar{m}^*, m)$. Hence $M_j \vDash \varphi(m)$. Let $i_j: M_j \hookrightarrow \bigoplus_{i < \delta} M_i$ be the canonical injection.  Therefore, $\bigoplus_{i < \delta} M_i \vDash \varphi(i_j(m))$ and $f(i_j(m))=m$. Hence $f$ is a pure epimorphism.

Since $f$ is a pure epimorphism,  it is a $\M$-epimorphism. As $K$ is closed under $\M$-epimorphic images,  $ \bigcup_{i < \delta} M_i \in K$. It is clear that if $M_i \leq_\M N$ for some $N \in K$, then  $ \bigcup_{i < \delta} M_i \leq_\M N$. Hence the Tarski-Vaught axioms hold. \end{proof}

\begin{remark} The previous proof shows that if $(K, Pure)$ satisfies Hypothesis \ref{hyp2}, then not only $(K, \leq_{Pure})$ is an abstract elementary class but also $(K, \leq_{Emb})$ and $(K, \leq_{RD})$. 

\end{remark}

Given $M \in K$, $|M|$ is the underlying set of $M$ and $\| M\|$ is the cardinality of $M$. $f: M \to N$ is a $\Kk$-embedding if  $f: M \cong f[M]$ and $f[M] \lea N$, in particular we always assume that $M,N \in K$. Observe that $\Kk$-embeddings are injective functions.

 An AEC has the \emph{amalgamation property} if any span $M \lea N_1, N_2$ can be completed to a commutative square; the \emph{joint embedding property} if any two models can be embedded into a third model;  and \emph{no maximal models} if every model can be properly extended.  

\begin{prop}\label{ap}
If $(K, \M)$ satisfies Hypothesis \ref{hyp2}, then $\km$ has the amalgamation property, the joint embedding property and no maximal models. 
\end{prop}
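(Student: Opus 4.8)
The plan is to establish the three properties---amalgamation, joint embedding, and no maximal models---by exploiting the closure of $K$ under direct sums together with the pushout lemma just proved (Lemma \ref{push}). All three are essentially consequences of the fact that we can form pushouts inside $K$ along $\M$-embeddings and that direct sums stay in $K$.

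\textbf{Amalgamation.} First I would prove the amalgamation property directly from Lemma \ref{push}. Given a span $N_1 \geq_\M M \leq_\M N_2$ with $M, N_1, N_2 \in K$, I would form the pushout $(P, \bar{f_1}: N_2 \to P, \bar{f_2}: N_1 \to P)$ of the two $\M$-embeddings in the category of $R$-modules. By Lemma \ref{push}, since both legs of the span are $\M$-embeddings, we get $P \in K$ and both $\bar{f_1}$ and $\bar{f_2}$ are $\M$-embeddings. The commutativity $\bar{f_2} \circ f_1 = \bar{f_1} \circ f_2$ is exactly the defining property of the pushout, so $(P, \bar{f_2}, \bar{f_1})$ amalgamates the span. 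This is the cleanest step and requires essentially no new work beyond invoking the lemma.

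\textbf{Joint embedding and no maximal models.} For joint embedding, given $M, N \in K$ I would use that $M \oplus N \in K$ (closure under direct sums) and that the canonical injections $M \to M \oplus N$ and $N \to M \oplus N$ are split embeddings, hence $\M$-embeddings by Proposition \ref{v-basic}(2). Thus $M \oplus N$ serves as the common extension. For no maximal models, given $M \in K$ I would exhibit a proper $\M$-extension: take any nonzero module $N \in K$ (for instance $N$ could be taken from any witness that $K$ is nonempty, or one can use $M$ itself) and form $M \oplus N$; again by Proposition \ref{v-basic}(2) the injection $M \to M \oplus N$ is a $\M$-embedding, and it is proper provided $N \neq 0$. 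The only mild subtlety is guaranteeing a nonzero summand lies in $K$; I expect this follows from closure under $\M$-submodules applied to any nontrivial member of $K$, or can be arranged since the classes under consideration contain nonzero modules.

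\textbf{Main obstacle.} I anticipate the substantive content is entirely concentrated in the pushout construction, which is already handled by Lemma \ref{push}; the remaining steps are formal. The one point deserving care is the \emph{no maximal models} clause, where one must produce a genuinely proper extension within $K$---here the risk is a degenerate case where $K$ might seem to contain only the zero module, so I would note that under Hypothesis \ref{hyp2} the class is closed under direct sums and $\M$-submodules, which forces enough nonzero objects (or one simply embeds $M$ into $M \oplus M$, which is proper whenever $M \neq 0$, and the zero module is not maximal since it embeds into any nonzero member). Thus the proof reduces to three short arguments, with Lemma \ref{push} and Proposition \ref{v-basic} doing all the heavy lifting.
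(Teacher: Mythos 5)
Your proposal is correct and matches the paper's proof, which consists of exactly one line: ``The result follows from the closure of $K$ under direct sums and Lemma \ref{push}'' --- you have simply filled in the details the authors left implicit (pushouts along $\M$-embeddings for amalgamation, $M \oplus N$ with split injections being $\M$-embeddings by Proposition \ref{v-basic}(2) for joint embedding and no maximal models). Your flagged subtlety about properness of $M \to M \oplus M$ is handled correctly, modulo the degenerate case $K = \{0\}$, which the paper implicitly ignores as well.
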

\begin{proof}
The result follows from the closure of $K$ under direct sums and Lemma \ref{push}.
\end{proof}

\begin{remark} 
If $(K, \M)$ satisfies Hypothesis \ref{hyp2}, $\km$ actually has the disjoint amalgamation property\footnote{ An AEC has the \emph{disjoint amalgamation property} if for any span $M \lea N_1, N_2$ with $N_1 \cap N_2 = M$ there are $N$, $f_1: N_1 \to N$ and $f_2: N_2 \to N$ such that $f_1\rest_M= f_2\rest_M$ and $f_1[N_1] \cap f_2[N_2] = f_1[M]$.} because $K$ is closed under pushouts of $\M$-embeddings (see Lemma \ref{push}). Since we will not use this result in what follows, we do not give any details. 
\end{remark} 

\begin{nota}\
\begin{itemize}
\item  If $\lambda$ is a cardinal and $\Kk$ is an AEC, then $\Kk_{\lambda}=\{ M \in K : \| M \|=\lambda \}$ and $\Kk_{\leq \lambda}=\{ M \in K : \| M \| \leq \lambda \}$
\item We write $f: M \xrightarrow[A]{} N$ if $f$ is an embedding and $f\rest_A = \id_A$. 
\end{itemize}

\end{nota}

\begin{defin}
$M$ is $\lambda$-\emph{universal over} $N$ if and only if $N \leq_\Kk M$
 and for any $N^* \in \Kk_{\leq\lambda}$ such that
$N \leq_\Kk N^*$, there is $f: N^* \xrightarrow[N]{} M$ a $\Kk$-embedding. $M$ is \emph{universal
over} $N$ if and only if $\| N\|= \| M\| $ and $M$ is $\| M
\|$-\emph{universal over} $N$. 
\end{defin}

\begin{defin} Assume $\lambda > \LS(\Kk)$. $M$ is $\lambda$-saturated if for every $N \lea M$ with $\| N \| < \lambda$,  $M$ is $\mu$-universal over $N$ for every $\mu < \lambda$. 

\end{defin}

The above definition is not the standard definition of saturated model for AECs, but it is equivalent if the AEC has the amalgamation property, the joint embedding property and no maximal models \cite[\S II.1.4]{shelahaecbook}.

We introduce limit models (also called brimmed models by Shelah), they were originally introduced in \cite{kosh}.

\begin{defin}\label{limit}
Let $\lambda$ be an infinite cardinal and $\alpha < \lambda^+$ be a limit ordinal.  $M$ is a \emph{$(\lambda,
\alpha)$-limit model over} $N$ if and only if there is $\{ M_i : i <
\alpha\}\subseteq \Kk_\lambda$ an increasing continuous chain such
that:
\begin{itemize}
\item $M_0 =N$ and  $M= \bigcup_{i < \alpha} M_i$, and
\item $M_{i+1}$ is universal over $M_i$ for each $i <
\alpha$.
\end{itemize}

$M$ is a $(\lambda, \alpha)$-limit model if there is $N \in
\Kk_\lambda$ such that $M$ is a $(\lambda, \alpha)$-limit model over
$N$. $M$ is a $\lambda$-limit model if there is a limit ordinal
$\alpha < \lambda^+$ such that $M$  is a $(\lambda,
\alpha)$-limit model.  $M$ is a limit model if there is an infinite cardinal $\lambda$ such that $M$ is a $\lambda$-limit model.

\end{defin}

\begin{remark}\label{uni-lim}
Observe that if $M$ is a $\lambda$-limit model, then $M$ has cardinality $\lambda$. Moreover, if $M$ is a $\lambda$-limit model, then $M$ is a universal model in
$\Kk_\lambda$ \cite[2.10]{maz}, i.e., given any $N \in \Kk_\lambda$, there is $f: N \to M$ a $\Kk$-embedding.
\end{remark}

\begin{fact}\label{sat-lim} 
Let $\Kk$ be an AEC with amalgamation, joint embedding and no maximal models. If $\lambda > \LS(\Kk)$ and $M$ is a $(\lambda, \alpha)$-limit model for $\alpha \in[ \LS(\Kk)^+, \lambda]$ a regular cardinal, then $M$ is an $\alpha$-saturated model.
\end{fact}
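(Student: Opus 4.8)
The plan is to verify the definition of $\alpha$-saturation directly, using the defining chain of the limit model together with amalgamation and the universality of the successor stages. Fix an increasing continuous chain $\{M_i : i < \alpha\} \subseteq \Kk_\lambda$ witnessing that $M$ is a $(\lambda,\alpha)$-limit model, so that $M = \bigcup_{i<\alpha} M_i$ and each $M_{i+1}$ is universal over $M_i$; since $\|M_i\| = \lambda$ this means $M_{i+1}$ is $\lambda$-universal over $M_i$. To show $M$ is $\alpha$-saturated I fix $N \lea M$ with $\|N\| < \alpha$, a cardinal $\mu < \alpha$, and $N^* \in \Kk_{\le\mu}$ with $N \lea N^*$; the goal is to produce a $\Kk$-embedding $f : N^* \xrightarrow[N]{} M$.

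First I would localize $N$ inside a single stage of the chain. Since $\alpha$ is regular and $\|N\| < \alpha = \cof(\alpha)$, and every element of $N \subseteq M = \bigcup_{i<\alpha} M_i$ lies in some $M_i$, the supremum of the indices realizing the elements of $N$ is some $j < \alpha$; by continuity of the chain this gives $N \subseteq M_j$ as sets. Because $N \lea M$, $M_j \lea M$ (as $M_j$ is a member of the chain with union $M$), and $N \subseteq M_j$, the coherence axiom yields $N \lea M_j$.

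Next I would amalgamate and push the result into the chain. Applying the amalgamation property to the span $N \lea M_j$ and $N \lea N^*$, and then the L\"{o}wenheim--Skolem--Tarski axiom to control cardinality, I obtain $Q \in \Kk_{\le\lambda}$ with $M_j \lea Q$ and a $\Kk$-embedding $g : N^* \to Q$ such that $g\rest_N = \id_N$; here $\|Q\| \le \lambda + \mu + \LS(\Kk) = \lambda$, which uses $\mu < \alpha \le \lambda$. Since $M_{j+1}$ is $\lambda$-universal over $M_j$ and $Q \in \Kk_{\le\lambda}$ extends $M_j$, there is a $\Kk$-embedding $k : Q \xrightarrow[M_j]{} M_{j+1}$. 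As $M_{j+1} \lea M$, the composite $f := k \circ g : N^* \to M$ is a $\Kk$-embedding, and since $N \subseteq M_j$ and $k$ fixes $M_j$ we get $f\rest_N = k\rest_N \circ g\rest_N = \id_N$. Thus $f : N^* \xrightarrow[N]{} M$, which witnesses that $M$ is $\mu$-universal over $N$; as $N$ and $\mu < \alpha$ were arbitrary, $M$ is $\alpha$-saturated.

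The argument is essentially bookkeeping, and I expect the only genuinely delicate points to be, first, the use of the regularity of $\alpha$ to capture all of $N$ below a single stage $j < \alpha$ --- this is exactly where the hypothesis $\alpha = \cof(\alpha)$ enters --- and second, keeping the amalgam $Q$ of cardinality at most $\lambda$ so that a \emph{single} universality step over $M_j$ suffices, which relies on $\mu < \alpha \le \lambda$. The remaining ingredients (coherence to transfer $N \lea M$ to $N \lea M_j$, and the composition of embeddings over $N$) are routine.
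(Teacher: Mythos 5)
Your proof is correct: the paper states this result as a Fact without proof (it is a standard property of limit models, cf.\ \cite[\S II.1]{shelahaecbook}), and your argument --- using the regularity of $\alpha$ to capture $N$ inside a single stage $M_j$ via coherence, amalgamating $N^*$ with $M_j$ over $N$, trimming the amalgam to size $\lambda$ with the L\"{o}wenheim--Skolem--Tarski axiom, and then embedding it into $M_{j+1}$ over $M_j$ by universality --- is exactly the standard one. The cardinality bookkeeping ($\mu < \alpha \leq \lambda$ and $\LS(\Kk) < \alpha$, so $\|Q\| \leq \lambda$ and one universality step suffices) is handled correctly, and no gaps remain.
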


Using limit models we introduce a non-standard definition of stability. The definition given is equivalent to the standard definition of stability (there are few Galois types over models) if the AEC has the amalgamation property, the joint embedding property and no maximal models \cite[\S II]{shelahaecbook}, \cite[2.9]{tamenessone}. We use this as our definition of stability because all the AECs considered in this paper have these three properties.

\begin{defin}  $\Kk$ is $\lambda$-stable if $\Kk$ has a $\lambda$-limit model. $\Kk$ is stable if there is a $\lambda$ such that $\Kk$ is $\lambda$-stable.
\end{defin}

Using limit models we introduce a definition of superstability. The definition given is equivalent to any definition of superstability used in the context of AECs  if the AEC has the amalgamation property, the joint embedding property, no maximal models, and is tame\footnote{An AEC is tame if two distinct Galois types can be distinguished by a \emph{small} set.}  \cite[1.3]{grva}, \cite{vaseyt}. We use this as our definition of superstability as all the AECs considered in this paper have these properties.

\begin{defin}
$\Kk$ is \emph{superstable} if and only if $\Kk$ has uniqueness of limit models in a tail of cardinals, i.e., there is a cardinal $\mu \geq \LS(\Kk)$ such that there is a unique $\lambda$-limit model for every $\lambda \geq \mu$.
\end{defin}

This is all the theory of abstract elementary classes used on this paper. More detailed introductions to the subject are presented in \cite{baldwinbook09}, \cite{shelahaecbook} and \cite{grossberg2002}.

\section{A stable independence relation}

 We begin by recalling the notion of a stable independence relation. We show that  classes satisfying Hyothesis \ref{hyp2} have a stable independence relation which will be the key to prove a Baer-like criterion for relative injective modules.


\subsection{Preliminaries} Independence relations on arbitrary categories were introduced and studied in detail in \cite{lrv1}, these extend Shelah's notion of non-forking which in turn extends linear and algebraic independence. In this subsection, we discuss various properties of an abstract independence relation.

\begin{defin}[{ \cite[3.4]{lrv1}}] An independence relation on a category $\mathcal{K}$ is a set $\dnf$ of commutative squares such that for any commutative diagram:

\[
  \xymatrix@=3pc{
    & & E \\
    B \ar[r]^{g_1}\ar@/^/[rru]^{h_1} & D \ar[ru]^t  & \\
    A \ar [u]^{f_1} \ar[r]_{f_2} & C \ar[u]_{g_2} \ar@/_/[ruu]_{h_2} &
  }
\]

we have that $(f_1, f_2, g_1, g_2) \in \dnf$ if and only if $(f_1, f_2, h_1, h_2) \in \dnf$.

\end{defin} 

An independence relation $\dnf$ is \emph{weakly stable} if it satisfies: symmetry \cite[3.9]{lrv1},  existence \cite[3.10]{lrv1}, uniqueness \cite[3.13]{lrv1}, and transitivity \cite[3.15]{lrv1}.

In \cite[3.24]{lrv1} the notion of a stable independence relation is introduced for any category $\mathcal{K}$. As we will only study independence relations on AECs, we restrict our discussion to stable independence relations on AECs.

\begin{nota} Let $\Kk$ be an AEC with an independence relation $\dnf$.

We write $M_1 \dnf^{M_3}_{M_0} M_2$ if $M_0 \lea M_1, M_2 \lea M_3$ and $(i_{0,1}, i_{0,2},i_{1,3},i_{2,3}) \in \dnf$ where $i_{\ell, m}$ is the inclusion map for every $\ell, m$. \end{nota}

On AECs, the independence notion can be extended to sets.

\begin{defin} Let $\Kk$ be an AEC with an independence relation $\dnf$. 
$A$ is (bar-)free from $B$ over $N_0$ in $N_3$, denoted by $\dnfb{N_0}{A}{B}{N_3}$, if $N_0 \lea N_3$, $A \cup B \subseteq |N_3|$ and there are $M_1, M_2, M_3 \in \Kk$ such that $A \subseteq |M_1|$, $B \subseteq |M_2|$, $N_3 \lea M_3$ and $M_1 \dnf^{M_3}_{N_0} M_2$. 
\end{defin}

The properties of $\dnfb{}{}{}{}$ we will use in this paper are listed in the next fact.

\begin{fact}[{\cite[8.4]{lrv1}}]\label{b-dnf}
Let $\Kk$ be an AEC and assume $\dnf$ is a weakly stable independence relation on $\Kk$. 
\begin{enumerate}
\item Assume  $M_0 \lea M_1, M_2 \lea M_3$. $M_1 \dnf_{M_0}^{M_3} M_2$ if and only if $ \dnfb{M_0}{M_1}{M_2}{M_3} $.
\item (Monotonicity) If $\dnfb{M_0}{A}{B}{M_3}$, $A_0 \subseteq A$, and $B_0 \subseteq B$, then $ \dnfb{M_0}{A_0}{B_0}{M_3}$.
\item (Base monotonicity) If  $\dnfb{M_0}{A}{B}{M_3}$, $M _0 \lea M_2 \lea M_3$ and $|M_2| \subseteq B$, then  $\dnfb{M_2}{A}{B}{M_3}$.
\item (Symmetry)  $\dnfb{M_0}{A}{B}{M_3}$ if and only if  $ \dnfb{M_0}{B}{A}{M_3}$.
\end{enumerate}
\end{fact}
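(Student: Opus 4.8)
The plan is to derive each of the four clauses from the corresponding structural property of the underlying independence relation $\dnf$ on \emph{models}, pushing it through the definition of the bar-free relation $\dnfb{}{}{}{}$ and using the coherence axiom of $\Kk$ to upgrade set-containment of the witnessing models to $\leq_\Kk$-containment. Throughout I would use two standing facts about the relation $\dnf$ of Definition~3.4: that it satisfies (left, right and base) monotonicity in the sense of \cite{lrv1}, and that the ambient model may be enlarged or shrunk freely. The latter is exactly the defining diagram: if $t\colon M_3 \to N_3$ is the inclusion (with $M_1, M_2 \lea M_3 \lea N_3$), then the square with apex $M_3$ lies in $\dnf$ if and only if the square with apex $N_3$ does, i.e.\ $M_1 \dnf^{M_3}_{M_0} M_2$ is equivalent to $M_1 \dnf^{N_3}_{M_0} M_2$.

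For clause (1) the forward direction is immediate: if $M_1 \dnf^{M_3}_{M_0} M_2$, then the triple $(M_1, M_2, M_3)$ itself is a witness, so $\dnfb{M_0}{M_1}{M_2}{M_3}$ holds. For the converse, I would unravel $\dnfb{M_0}{M_1}{M_2}{M_3}$ to obtain witnesses with $|M_1| \subseteq |N_1|$, $|M_2| \subseteq |N_2|$ and $M_3 \lea N_3$ satisfying $N_1 \dnf^{N_3}_{M_0} N_2$. Since $M_1 \lea M_3 \lea N_3$ and $|M_1| \subseteq |N_1| \lea N_3$, coherence gives $M_1 \lea N_1$, and likewise $M_2 \lea N_2$. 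Restricting along these two $\Kk$-embeddings by left and right monotonicity of $\dnf$ yields $M_1 \dnf^{N_3}_{M_0} M_2$, and shrinking the ambient model from $N_3$ back to $M_3$ by the $t$-invariance above gives $M_1 \dnf^{M_3}_{M_0} M_2$.

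Clause (2) is purely formal: any triple $(M_1, M_2, M_3)$ witnessing $\dnfb{M_0}{A}{B}{M_3}$ already witnesses $\dnfb{M_0}{A_0}{B_0}{M_3}$, because $A_0 \subseteq A \subseteq |M_1|$ and $B_0 \subseteq B \subseteq |M_2|$ while the ambient model and base are unchanged. Clause (4) is equally direct: given witnesses with $A \subseteq |N_1|$, $B \subseteq |N_2|$ and $N_1 \dnf^{N_3}_{M_0} N_2$, I would apply symmetry of $\dnf$ (available since $\dnf$ is weakly stable) to get $N_2 \dnf^{N_3}_{M_0} N_1$; then $(N_2, N_1, N_3)$ witnesses $\dnfb{M_0}{B}{A}{M_3}$.

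The one clause requiring genuine care is (3), base monotonicity, and this is where I expect the main obstacle. Given witnesses with $A \subseteq |N_1|$, $B \subseteq |N_2|$, $M_3 \lea N_3$ and $N_1 \dnf^{N_3}_{M_0} N_2$, together with $M_0 \lea M_2 \lea M_3$ and $|M_2| \subseteq B$, I would first note $|M_2| \subseteq B \subseteq |N_2|$ and $M_2 \lea M_3 \lea N_3$, so coherence yields $M_2 \lea N_2$. The conclusion then follows by applying base monotonicity of the underlying relation to obtain $N_1 \dnf^{N_3}_{M_2} N_2$, whence $(N_1, N_2, N_3)$ witnesses $\dnfb{M_2}{A}{B}{M_3}$. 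The delicate point is that base monotonicity is not literally one of the four axioms bundled into ``weakly stable'' (symmetry, existence, uniqueness, transitivity); before transporting it to $\dnfb{}{}{}{}$ I would therefore need to confirm it as a standing property of independence relations in the framework of \cite{lrv1} (or derive it there from transitivity and existence), which is the step most likely to need explicit justification rather than a routine unravelling of definitions.
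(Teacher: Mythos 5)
First, a point of comparison: the paper does not prove this statement at all --- it is quoted as a Fact with the citation \cite[8.4]{lrv1}, so your attempt can only be measured against the argument in that source. Your treatment of clauses (1), (2) and (4) is essentially correct and matches the intended unravelling: (2) and (4) really are formal (same witness triple, plus symmetry of $\dnf$ for squares), and in (1) you correctly isolate the two nontrivial ingredients, namely coherence to upgrade $|M_1|\subseteq|N_1|$ to $M_1\lea N_1$, and the invariance clause built into Definition~3.4 (post-composition with $t\colon M_3\to N_3$ preserves \emph{and reflects} membership in $\dnf$) to shrink the ambient model back from $N_3$ to $M_3$. Your one import there, left/right monotonicity of the model-level relation, is not an axiom of ``weakly stable'' but is indeed established in \cite{lrv1} for such relations (it is derived from existence, uniqueness, transitivity and invariance), so that step is legitimate modulo citing the right lemma.

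Clause (3), however, has a genuine gap beyond the one you flagged. You propose to conclude $N_1 \dnf^{N_3}_{M_2} N_2$ and let $(N_1,N_2,N_3)$ witness $\dnfb{M_2}{A}{B}{M_3}$. But the notation $N_1 \dnf^{N_3}_{M_2} N_2$ is only defined when $M_2 \lea N_1, N_2 \lea N_3$, and your hypotheses give only $|M_2|\subseteq B\subseteq |N_2|$, hence (via coherence) $M_2\lea N_2$; there is no reason whatsoever that $M_2$ is contained in $N_1$ --- typically $N_1\cap N_2$ is just $M_0$. So the statement you invoke is ill-posed, and the same triple cannot witness the conclusion, since the definition of $\dnfb{}{}{}{}$ forces the new base to embed into \emph{both} witnesses. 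Any correct formulation of base monotonicity must therefore enlarge the left witness: one needs to produce $N_1'$ with $N_1, M_2 \lea N_1'$ and $N_1' \dnf^{N_3'}_{M_2} N_2$ for some $N_3'\geq N_3$, and manufacturing such an $N_1'$ in an abstract category (where there is no submodel generated by $N_1\cup M_2$) is exactly where the full weakly stable package --- existence/extension, uniqueness and transitivity, not just a routine transport --- gets used in the proof of \cite[8.4]{lrv1}. You correctly sensed that base monotonicity was the delicate clause, but the obstacle is not merely that it is absent from the axiom list; the specific reduction you wrote down fails on well-formedness and would need this extra amalgamation argument to repair.
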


\begin{defin}[{\cite[8.7]{lrv1}}] Let $\Kk$ be an AEC with an independence relation $\dnf$  and $\theta$ be a cardinal. $\dnf$ has the (right) $(< \theta)$-witness property if: $M_1 \dnf_{M_0}^{M_3} M_2$ if and only if $M_0 \lea M_1, M_2 \lea M_3$ and for every $A \subseteq |M_2|$, if $|A| < \theta$ then $\dnfb{M_0}{M_1}{A}{M_3} $. $\dnf$ has the (right) witness property if there is cardinal $\theta$ such that $\dnf$ has the (right) $(< \theta)$-witness property.
\end{defin}

We are ready to introduce stable independence relations.

\begin{defin}[{\cite[3.24, 8.14]{lrv1}}] Let $\Kk$ be an AEC. $\dnf$ is \emph{stable independence relation}  (on $\Kk$) if $\dnf$ is weakly stable and satisfies the witness property and local character \cite[8.6]{lrv1}.
\end{defin}

\subsection{Main results} The following definition appears first in \cite[2.2]{lrvcell}, but under additional assumptions on the category  $(K, {}_RMorphisms)$.

\begin{defin}\label{indp} Assume $(K, \M)$ satisfies Hypothesis \ref{hyp2}.
$(f_1, f_2, h_1, h_2) \in \dnf$ if and only if all the arrows of the outer square are $\M$-embeddings,  $(P, g_1, g_2)$ is the pushout of $(M, f_1, f_2)$, and the unique map $t: P \to Q$ is a $\M$-embedding:

 \[
  \xymatrix@=3pc{
    & & Q \\
    N_1 \ar[r]^{g_1}\ar@/^/[rru]^{h_1} & P \ar[ru]^t & \\
    M \ar [u]^{f_1} \ar[r]_{f_2} & N_2 \ar[u]_{g_2} \ar@/_/[ruu]_{h_2} &
  }
\]

\end{defin}

We will show that $\dnf$ is a stable independence relation if $(K, \M)$ satisfies Hypothesis \ref{hyp2}. That $\dnf$ is a weakly stable independence relation follows from \cite{lrvcell} and Lemma \ref{push}.

\begin{fact}[{ \cite[2.7]{lrvcell}}]\label{weak-stable}
If $(K, \M)$ satisfies Hypothesis \ref{hyp2}, then $\dnf$ is a weakly stable independence relation on $\km$.

\end{fact}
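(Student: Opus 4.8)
The plan is to deduce the statement from \cite[2.7]{lrvcell} by checking that its hypotheses hold in our setting. That result asserts that the pushout-square relation of Definition \ref{indp} is a weakly stable independence relation whenever one works with a distinguished class of morphisms that contains the isomorphisms, is closed under composition, and is closed under pushouts, inside a class admitting the relevant colimits. First I would record that, by Lemma \ref{aec}, $\km=(K,\leq_\M)$ is an AEC, so it is closed under directed colimits and carries a well-behaved notion of $\M$-embedding; and by Hypothesis \ref{hyp2} it is closed under direct sums, supplying coproducts. That $\M$-embeddings contain isomorphisms and compose is immediate for each of the three choices of $\M$. The one structural input that requires genuine work is closure under pushouts of $\M$-embeddings, and this is precisely the content of Lemma \ref{push}: given a span one of whose legs is a $\M$-embedding, the pushout computed in the category of $R$-modules stays in $K$ and its corresponding leg is again a $\M$-embedding. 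With these inputs in place, \cite[2.7]{lrvcell} applies directly.

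To make the mechanism transparent I would also indicate, at least in outline, why the four defining properties hold, since they all flow from the universal property of pushouts together with Lemma \ref{push}. For existence, given a span $N_1 \leftarrow M \rightarrow N_2$ of $\M$-embeddings, Lemma \ref{push} shows the pushout $P$ lies in $K$ with both legs $\M$-embeddings; taking $Q=P$ and $t=\id_P$ exhibits the pushout square itself as a member of $\dnf$, which is exactly independent existence. Symmetry is immediate because the pushout of $(f_1,f_2)$ and of $(f_2,f_1)$ agree up to the canonical swap, so the defining condition on $t$ is symmetric in the two legs. Uniqueness of the independent amalgam follows from the universal property: any two $\M$-embeddings of the legs into a common model that agree on $M$ factor uniquely through $P$, and the factoring map is forced to be a $\M$-embedding by the condition on $t$ in Definition \ref{indp}. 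Transitivity reduces to the pasting lemma for pushout squares: a horizontal or vertical composite of two pushout squares is again a pushout square, and composites of $\M$-embeddings are $\M$-embeddings.

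The main obstacle is not in these categorical manipulations but in the fact that underlies them, namely Lemma \ref{push}: one must know that the pushout in $R$-modules does not leave $K$ and keeps the distinguished leg inside $\M$. That in turn rests on showing the relevant quotient map is a $\M$-epimorphism, so that closure under $\M$-epimorphic images from Hypothesis \ref{hyp2} applies, and on the preservation of $RD$- and $pp$-formulas under the pushout construction. Granting Lemma \ref{push}, the remainder is a direct appeal to \cite[2.7]{lrvcell}, so I expect no further difficulty beyond correctly matching our hypotheses to the category-theoretic framework of that paper.
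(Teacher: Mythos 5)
Your proposal is correct and matches the paper exactly: the paper states this as a Fact with no independent proof, remarking only that it ``follows from \cite{lrvcell} and Lemma \ref{push}'' --- i.e., the sole input needing verification in this setting is closure of $K$ under pushouts along $\M$-embeddings, which is precisely what you identify as the crux before appealing to \cite[2.7]{lrvcell}. Your additional sketch of existence, symmetry, uniqueness and transitivity via the universal property and pasting of pushout squares is a faithful outline of the cited argument and introduces no gap.
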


We turn to show local character. 

\begin{lemma}\label{local}
If $(K, \M)$ satisfies Hypothesis \ref{hyp2}, then $\dnf$ has local character on $\km$. More precisely, if $M_1, M_2 \leq_\M N$, then there are $M_1', M_0 \in K$ such that $M_0 \leq_\M M_1', M_2 \leq_\M N$, $M_1 \leq_\M M_1'$, $\| M_0 \| \leq \| M_1 \| + \text{card}(R) + \aleph_0$ and $M'_1 \dnf^N_{M_0}  M_2$.
\end{lemma}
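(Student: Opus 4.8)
The plan is to first unwind Definition \ref{indp} into a concrete module-theoretic condition, and then to build $M_0$ and $M_1'$ by an $\omega$-step closing-off construction, with the cardinality bookkeeping arranged via a reduction to the quotient $N/M_2$.

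\emph{Concrete characterization.} Computing the pushout $P$ of $M_0 \leq_\M M_1'$ and $M_0 \leq_\M M_2$ together with the canonical map $t\colon P \to N$ given by $t([(a,b)]_P) = a+b$, one checks that $t$ is injective if and only if $M_1' \cap M_2 = M_0$ (as submodules of $N$), and that $t[P] = M_1' + M_2$. By Lemma \ref{push} the object $P$ lies in $K$ and the pushout legs are $\M$-embeddings, so $M_1' \dnf^N_{M_0} M_2$ holds if and only if $M_0 \leq_\M M_1' \leq_\M N$, $M_0 \leq_\M M_2 \leq_\M N$, $M_1' \cap M_2 = M_0$, and $M_1' + M_2 \leq_\M N$. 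Thus it suffices to produce a small $M_0$ and an extension $M_1'$ of $M_1$ witnessing these four properties.

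\emph{The quotient reduction.} The key to keeping $M_0$ small is to avoid purifying $M_1' + M_2$ directly, which would force adding $\|M_2\|$-many witnesses. Writing $q\colon N \to N/M_2$, I would use the following fact: if $M_2 \leq_\M N$ and $M_2 \leq B \leq N$, then $B \leq_\M N$ if and only if $B/M_2 \leq_\M N/M_2$. Its $\M = Pure$ case is proved by taking a $pp$-witness $\bar c \in B$ for the image of a formula in $N/M_2$ and correcting it by a tuple of $M_2$ supplied by the purity of $M_2$ in $N$ (the correcting tuple exists in $N$ as the difference of $\bar c$ with an $N$-witness, and then descends to $M_2$ by purity); the $RD$ case is analogous and the $Emb$ case is trivial. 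Applied to $B = M_1' + M_2$, this reduces ``$M_1' + M_2 \leq_\M N$'' to ``$q[M_1'] \leq_\M N/M_2$'', a condition about a submodule of $N/M_2$ that can be arranged using only a \emph{small} amount of data.

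\emph{The construction.} I would build increasing continuous chains $\langle M_1^n : n<\omega\rangle$ in $N$ with $M_1^0 = M_1$ and $\langle M_0^n : n<\omega\rangle$ in $M_2$, using the L\"owenheim--Skolem closure of Lemma \ref{aec} at each finite stage (small sets have $\M$-closures of size $\leq (\,\cdot\,)+\text{card}(R)+\aleph_0$), so that at each step I (1) close $M_1^n$ to be $\M$ in $N$; (2) lift into $M_1^{n+1}$ a small generating set of the $\M$-closure of $q[M_1^n]$ in $N/M_2$; (3) put the (small) intersection $M_1^n \cap M_2$ into $M_0^{n+1}$ and close $M_0^{n+1}$ to be $\M$ in $M_2$; and (4) absorb $M_0^{n+1}$ into $M_1^{n+1}$. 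Each step adds at most $\|M_1\|+\text{card}(R)+\aleph_0$ new elements, so $M_1' := \bigcup_n M_1^n$ and $M_0 := \bigcup_n M_0^n$ both have size $\leq \|M_1\|+\text{card}(R)+\aleph_0$, giving the required bound on $\|M_0\|$. Taking unions, $M_1' \leq_\M N$, $M_0 \leq_\M M_1'$ and $M_0 \leq_\M M_2$ by continuity of $\leq_\M$ along chains (the axioms established in Lemma \ref{aec}); $M_1' \cap M_2 = M_0$ by (3), since any $x$ in the intersection lies in some $M_1^n$, hence in $M_1^n \cap M_2 \subseteq M_0^{n+1}$; and $q[M_1'] \leq_\M N/M_2$ by (2), whence $M_1' + M_2 \leq_\M N$ by the quotient fact. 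The characterization above then yields $M_1' \dnf^N_{M_0} M_2$, while $M_1 \leq_\M M_1'$ because $M_1 \leq_\M N$ and $M_1 \subseteq M_1' \subseteq N$.

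The main obstacle is precisely the cardinality control, namely preventing $M_0 = M_1' \cap M_2$ from swelling to the size of $M_2$ when one forces $M_1' + M_2 \leq_\M N$. The quotient reduction localizes this purification to a small submodule of $N/M_2$ and is the crux of the argument; verifying that reduction (the witness-correction step drawing on $M_2 \leq_\M N$) is the one place where the three cases of $\M$ must be separated, and is where I would concentrate the care.
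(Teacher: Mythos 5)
Your proof is correct, but it takes a genuinely different route from the paper's. You first translate Definition \ref{indp} into the purely algebraic statement that $M_1' \dnf^N_{M_0} M_2$ amounts to $M_1' \cap M_2 = M_0$ together with $M_1' + M_2 \leq_\M N$ (on top of the four $\M$-embeddings), and you control the second condition via the quotient-transfer lemma ``for $M_2 \leq_\M N$ and $M_2 \leq B \leq N$, $B \leq_\M N$ iff $B/M_2 \leq_\M N/M_2$,'' so that the closing-off can happen in $N/M_2$; both the characterization and the transfer lemma check out for all three choices of $\M$ (and $N/M_2 \in K$ since $q$ is an $\M$-epimorphism, so the L\"{o}wenheim--Skolem closure of Lemma \ref{aec} is available there). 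The paper never passes to the quotient: it fixes a family $\Phi$ of $pp$- (respectively $RD$-) formulas with one variable slot reserved for parameters from $M_2$ --- linear equations together with $\exists z \exists w (rw = z \wedge z = x_1 + x_2)$ --- and builds the chains so that any formula of $\Phi$ realized with an $M_2$-parameter already has a witness in $M_{0,i}$ (its condition (4)); it then verifies directly that $t \colon P \to N$ is an $\M$-embedding, correcting a divisibility witness for $m+n$ by some $p \in M_0$ and splitting it into an $M_1'$-witness and an $M_2$-witness, with the linear equations in $\Phi$ doing the work of your intersection condition $M_1^n \cap M_2 \subseteq M_0^{n+1}$ (the pure case is cited to \cite[4.14]{maz2}). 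Your version buys a uniform, visibly lattice-theoretic argument at the cost of proving the quotient lemma; the paper's version stays entirely inside $N$, runs parallel to the earlier pure-embeddings proof, and packages the interaction with $M_2$ syntactically --- the cardinal bookkeeping ($\omega$-chains of L\"{o}wenheim--Skolem closures, $\|M_0\| \leq \|M_1\| + \text{card}(R) + \aleph_0$) is identical in both. One cosmetic point: at the union, $M_0 \leq_\M M_1'$ follows from the coherence axiom (since $M_0 \leq_\M M_2 \leq_\M N$ and $M_0 \subseteq M_1' \leq_\M N$) rather than literally from ``continuity of $\leq_\M$ along chains,'' as your construction never arranges $M_0^n \leq_\M M_1^n$ stage by stage; coherence is among the AEC axioms behind Lemma \ref{aec}, so this is a phrasing issue only.
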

\begin{proof}
The case when $\M = Pure$ was shown in \cite[4.14]{maz2}.  We show the case when $\M = RD$ as the case when $\M = Emb$ is similar, but simpler. The proof follows the structure of  the proof of \cite[4.14]{maz2}, but we provide some details as there is a key difference. 

Let $\Phi$ be the following set of $pp$-formulas:

\[\{  \sum_{k=1}^{l} s_{k}x_{k} = 0 : s_k \in R \text{ for every } k \} \cup \{ \exists z \exists w ( r w = z \wedge z  = x_1 + x_2) : r \in R \}\]

Let $M_1, M_2 \leq_{RD} N$. We build two increasing continuous chains $\{M_{0,i} : i < \omega \}$ and $\{M'_{1,i} : i < \omega \}$ such that:
\begin{enumerate}
\item $M'_{1,0} = M_1$,
\item $M_{0, i} \leq_{RD} M'_{1, i+1}, M_2 \leq_{RD} N$, 
\item $\| M_{0, i} \|, \| M'_{1,i} \| \leq \| M_1 \| + \text{card}(R) + \aleph_0$, and
\item if $\bar{a} \in M'_{1,i}$, $\varphi(\bar{x}, y) \in \Phi$ and there is $m \in M_2$ such that $N \vDash \varphi(\bar{a}, m)$, then there is $b \in M_{0, i}$ such that $N \vDash \varphi(\bar{a}, b)$. 
\end{enumerate}

The construction is standard and similar to that of \cite[4.14]{maz2} so we omit it.

 We show that this is enough.  Let $M_0= \bigcup_{ i < \omega } M_{0, i}$ and $M'_1 = \bigcup_{i < \omega } M'_{1,i}$. Observe that $\| M_0 \| \leq  \| M_1 \| + \text{card}(R) + \aleph_0$ and we show that $M'_1 \dnf^N_{M_0}  M_2$.

We show that  $t:  P = (M'_1 \oplus M_2)/ \{ (m, -m) : m \in M_0 \} \to N$ given by $t([(m, n)]_P)= m + n$ is a $RD$-embedding. 

The proof that $t$ is an embedding is the same as that of \cite[4.14]{maz2} using that $x_1-x_2 - x_3 =0 \in \Phi$. We show that $t$ is a $RD$-embedding. Let $\varphi(y) = \exists w ( rw =y)$ be such that $N \vDash  \exists w ( rw =y) (m+ n)$ with $m \in M_1'$ and $n \in M_2$. Then $N \vDash \exists z \exists w( rw = z \wedge z  = x_1 + x_2) (m, n)$. Observe that $\exists z \exists w( rw = z \wedge z  = x_1 + x_2) \in \Phi$, $m \in M_1'$ and $n \in M_2$, then there is $p \in M_0$ such that $N \vDash \exists w ( rw =y) (m + p)$ by Condition (4) of the construction. As $M_1' \leq_{RD} N$, there is $m^* \in M_1'$ such that $N \vDash r m^* = m + p$. 

Since $N \vDash \exists w ( rw =y) (m+ n)$  and $N \vDash \exists w ( rw =y) (m + p)$, $N \vDash \exists w ( rw =y) (n - p)$. As $M_2 \leq_{RD} N$, there is $n^* \in M_2$ such that $N \vDash r n^* = n-p$.

It follows that $N \vDash r(m^* + n^*) = m + n$ for $m^* \in M_1'$, $n^* \in M_2$ by adding  the last equation of the previous two paragraph. Therefore,  $t$ is a $RD$-embedding and $M'_1 \dnf^N_{M_0}  M_2$.
\end{proof}

We show the last condition for $\dnf$ to be a stable independence relation. 

\begin{lemma}\label{s-witness}
If $(K, \M)$ satisfies Hypothesis \ref{hyp2}, then $\dnf$ has the $(<\aleph_0)$-witness property on $\km$.
\end{lemma}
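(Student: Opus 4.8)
The plan is to establish the biconditional defining the $(<\aleph_0)$-witness property, with the forward implication formal and the backward implication carrying all the work. For the forward direction, if $M_1 \dnf^{M_3}_{M_0} M_2$ then $\dnfb{M_0}{M_1}{M_2}{M_3}$ by Fact \ref{b-dnf}(1), and monotonicity (Fact \ref{b-dnf}(2)) yields $\dnfb{M_0}{M_1}{A}{M_3}$ for every $A\subseteq|M_2|$, in particular every finite one.

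For the backward direction I work in the configuration $M_0\leq_\M M_1,M_2\leq_\M M_3$ (with $M_0\leq_\M M_2$ and $M_1\leq_\M M_3$), assuming $\dnfb{M_0}{M_1}{A}{M_3}$ for all finite $A\subseteq|M_2|$. By Definition \ref{indp} it suffices to show that the canonical map $t\colon P\to M_3$, $t([(x,y)]_P)=x+y$, out of the pushout $P=(M_1\oplus M_2)/\{(m,-m):m\in M_0\}$ is a $\M$-embedding. The guiding observation is that being a $\M$-embedding is a \emph{finitary} property: for $\M=Emb$ it says $t$ is injective, i.e. $M_1\cap M_2=M_0$; for $\M=RD$ it adds that, for each $r\in R$ and $[(x,y)]_P\in P$, $x+y\in rM_3$ forces $[(x,y)]_P\in rP$; and for $\M=Pure$ it asks that, for each $pp$-formula $\varphi$ and finite tuple $\bar p$ from $P$, $M_3\vDash\varphi(t(\bar p))$ forces $P\vDash\varphi(\bar p)$. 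In each case the instance to be verified mentions only finitely many elements of $M_2$.

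So I fix such an instance, let $A\subseteq|M_2|$ be the finite set of its $M_2$-coordinates ($A=\{z\}$ when testing injectivity at $z\in M_1\cap M_2$, $A=\{y\}$ in the $RD$-instance, $A=\{y_1,\dots,y_n\}$ in the $Pure$-instance), and apply the hypothesis to get $\dnfb{M_0}{M_1}{A}{M_3}$. Unwinding bar-independence produces $N_1,N_2,N_3\in K$ with $M_1\subseteq|N_1|$, $A\subseteq|N_2|$, $M_3\leq_\M N_3$ and $N_1\dnf^{N_3}_{M_0}N_2$, hence an auxiliary pushout $P'=(N_1\oplus N_2)/\{(m,-m):m\in M_0\}$ with $t'\colon P'\to N_3$ a $\M$-embedding; transitivity and coherence of $\km$ give $M_1\leq_\M N_1$, while $M_0\leq_\M N_2$ and $M_3\leq_\M N_3$ come directly. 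I then transport the instance through $t'$: the hypothesis on $t$ sends the relevant datum into $N_3$ (using $M_3\leq_\M N_3$ and upward preservation of $pp$-formulas), and the $\M$-embedding $t'$ reflects it into $P'$. Since $pp$-formulas and divisibility are computed coordinatewise on $N_1\oplus N_2$ and the kernel is $\{(m,-m):m\in M_0\}$, this reflection yields a single element $\bar m\in M_0$ \emph{coupling} the two coordinates, with an $N_1$-solution for the $M_1$-data against $\bar m$ and an $N_2$-solution for the $A$-data against $-\bar m$. The $N_1$-solution descends to $M_1$ because $M_1\leq_\M N_1$; the $N_2$-solution is pulled back to $M_2$ by routing through the common extension ($N_3\vDash$ by upward preservation, then $M_3\leq_\M N_3$ reflects into $M_3$, then $M_2\leq_\M M_3$ into $M_2$; for $RD$ this is the chain $M_3\cap rN_3=rM_3$ followed by $M_2\cap rM_3=rM_2$). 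As $\bar m$ is common to both coordinates, the two solutions glue in $P$ to give $[(x,y)]_P\in rP$, resp. $P\vDash\varphi(\bar p)$; injectivity is either the case $\varphi\equiv(x=0)$ or, directly, $t'([(z,-z)]_{P'})=0$ forcing $z\in M_0$.

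The main obstacle is precisely this transport across an asymmetry: the finite witness $\dnfb{M_0}{M_1}{A}{M_3}$ guarantees only $A\subseteq N_2$, not $M_2\subseteq N_2$, so the second-coordinate data cannot be reflected directly from $N_2$ back to $M_2$; it must instead be routed through the common overmodel $M_3\leq_\M N_3$ and down again via $M_2\leq_\M M_3$. Equally essential is that the independence $N_1\dnf^{N_3}_{M_0}N_2$ supplies a \emph{single} coupling element $\bar m\in M_0$ rather than merely separate solvability on each coordinate, since it is exactly this shared $\bar m$ that permits the two reflected solutions to be assembled into one element of $P$.
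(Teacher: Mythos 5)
Your proposal is correct and follows essentially the same route as the paper's proof: for each finitary instance of the $\M$-embedding condition on the canonical pushout map $t\colon P\to M_3$, you apply the hypothesis to the finite set of $M_2$-coordinates, obtain the auxiliary independent square whose pushout map $t'$ is a $\M$-embedding, reflect the instance through $t'$ to extract coupling elements in $M_0$, descend the first coordinate via $M_1\leq_\M N_1$ and route the second through $N_3$, $M_3$, and $M_2$ exactly as the paper does, and glue in $P$. Your explicit identification of the asymmetry (only $A\subseteq N_2$, forcing the detour through the common overmodel) and of the single coupling tuple in $M_0$ as the crux matches the paper's argument precisely, with only cosmetic differences (e.g., you test injectivity via the kernel with $A=\{z\}$ rather than the paper's two-element $A$).
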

\begin{proof} We show the case when $\M = Pure$, the case when $\M = RD$ is similar and the case when $\M = Emb$ is covered in the proof of when $\M = Pure$.

Let $M_0 \leq_{Pure} M_1, M_2 \leq_{Pure} M_3$ and assume that for every $A \subseteq |M_2|$, if $|A| < \aleph_0$ then $ \dnfb{M_0}{M_1}{A}{M_3}$.

We show that $t:  P=M_1 \oplus M_2 /\{ (m, -m) : m \in M_0 \} \to M_3$ given by $t([(m_1,m_2)]_P)=m_1 + m_2$ is a pure embedding.


We show first that $t$ is an embedding. Assume $m_1 + m_2 = m_1^* + m_2^*$ for $m_i, m_i^* \in M_i$ for $i \in \{1, 2\}$. Let $A =\{ m_2, m_2^*\}$, then $ \dnfb{M_0}{M_1}{A}{M_3}$ so there are $L_1, L_2 \leq_{Pure} L_3$ such that $|M_1| \subseteq |L_1|$, $A \subseteq |L_2|$, $M_3 \leq_{Pure} L_3$ and $L_1 \dnf_{M_0}^{L_3} L_2$ . By definition of $\dnf$ we have that in the following pushout diagram:

\[\begin{tikzcd}
                                                   &                                                                                  &  & L_3 \\
L_1 \arrow[r, "q_1"'] \arrow[rrru, bend left] & { Q=L_1 \oplus L_2 /\{ (m, -m) : m \in M_0 \} } \arrow[rru, "s" description]  &  &   \\
M_0 \arrow[u] \arrow[r]                       & L_2 \arrow[u, "q_2"] \arrow[rruu, bend right]                                      &  &  
\end{tikzcd}\]
 $s:  Q=L_1 \oplus L_2 /\{ (m, -m) : m \in M_0 \} \to L_3$ given by $s([(m_1,m_2)]_Q)=m_1 + m_2$  is a pure embedding. As $m_1 + m_2 = m_1^* + m_2^*$, it is clear that $s([(m_1,m_2)]_Q) = s([(m_1^*,m_2^*)]_Q)$. So $[(m_1,m_2)]_Q = [(m_1^*,m_2^*)]_Q$. Since $P$ and $Q$ are obtained by taking a quotient by $\{ (m, -m) : m \in M_0 \}$, it follows that  $[(m_1,m_2)]_P = [(m_1^*,m_2^*)]_P$. Hence $t$ is an embedding.

We show that $t$ is a pure embedding. Let $\varphi(y)$ be a $pp$-formula and suppose that $M_3 \vDash \varphi(m_1 +m_2)$ for $m_1 \in M_1$ and $m_2 \in M_2$. Let $A = \{ m_2\}$ and as before we can find $L_1, L_2, L_3, Q$ and $s: Q \to L_3$ a pure embedding. 

 Since $M_3 \leq_{Pure} L_3$, $L_3 \vDash \varphi(m_1 +m_2)$. Observe that $s([(m_1,m_2)]_Q)= m_1 + m_2$, so $Q \vDash \varphi( [(m_1,m_2)]_Q)$ as $s$ is a pure embedding. Suppose $\varphi(y) = \exists \bar{w}  ( \bigwedge_{i=1}^{k} \left( \sum_{j=1}^{n} r_{i,j} w_j = s_i y \right) )$. Since $Q \vDash \varphi( [(m_1,m_2)]_Q)$, there are $[(p_1^1, p_2^1)]_Q, \cdots, [(p_1^n, p_2^n)]_Q\in Q$ such that \[Q \vDash \bigwedge_{i=1}^{k} \left( \sum_{j=1}^{n} r_{i,j} w_j = s_i y \right) ( [(p_1^1, p_2^1)]_Q, \cdots, [(p_1^n, p_2^n)]_Q,  [(m_1,m_2)]_Q]).\]
 
Then for every $i \in \{1, \cdots k\}$, there is $\ell_i \in M_0$ such that $(\sum_{j=1}^{n} r_{i,j} p_1^j- s_i m_1, \sum_{j=1}^{n} r_{i,j} p_2^j- s_i m_2) = (\ell_i, -\ell_i)$. So 

\[ L_1 \oplus L_2 \vDash \bigwedge_{i=1}^{k} \left( \sum_{j=1}^{n} r_{i,j} w_j = s_i y  + x _i\right)( (p_1^1, p_2^1), \cdots, (p_1^n, p_2^n),  (m_1,m_2), (\ell_1, -\ell_1), \cdots, (\ell_k, -\ell_k))\]

Taking the projection onto $L_1$ and $L_2$ and introducing an existential quantifier, we have that $L_1 \vDash \exists \bar{w} \bigwedge_{i=1}^{k} \left( \sum_{j=1}^{n} r_{i,j} w_j = s_i y  + x_i \right) (m_1, \bar{\ell})$ and $L_2 \vDash \exists \bar{w} \bigwedge_{i=1}^{k} \left( \sum_{j=1}^{n} r_{i,j} w_j = s_i y  + x_i \right) (m_2, -\bar{\ell})$. 

Since $\exists \bar{w} \bigwedge_{i=1}^{k} \left( \sum_{j=1}^{n} r_{i,j} w_j = s_i y  + x_i \right) $ is a $pp$-formula, $m_1, \bar{\ell} \in M_1$ and $M_1\leq_{Pure} L_1$, there are $g_1, \cdots, g_n \in M_1$ such that:

\[ M_1 \vDash \bigwedge_{i=1}^{k} \left( \sum_{j=1}^{n} r_{i,j} w_j = s_i y + x_i \right) (\bar{g}, m_1, \bar{\ell} )  \]

Using again that $ \exists \bar{w} \bigwedge_{i=1}^{k} \left( \sum_{j=1}^{n} r_{i,j} w_j = s_i y  + x_i \right) $ is a $pp$-formula  together with the fact that  $m_2, -\bar{\ell} \in M_2$, $L_2 \leq_{Pure} L_3$, $M_3 \leq_{Pure} L_3$ and $M_2 \leq_{Pure} M_3$, it follows that there are $h_1, \cdots, h_n \in M_2$ such that:

\[  M_2\vDash \bigwedge_{i=1}^{k} \left( \sum_{j=1}^{n} r_{i,j} w_j = s_i y + x_i \right) (\bar{h}, m_2, -\bar{\ell} ).\]
 
 Therefore, \[ P = M_1 \oplus M_2 /\{ (m, -m) : m \in M_0 \}  \vDash \bigwedge_{i=1}^{k} \left( \sum_{j=1}^{n} r_{i,j} w_j = s_i y \right) [ [(g_1, h_1)]_P, \cdots, [(g_n, h_n)]_P,  [(m_1,m_2)]_P].\] Hence $P \vDash \varphi(  [(m_1,m_2)]_P)$. 

\end{proof}

\begin{theorem}\label{s-ind}
If $(K, \M)$ satisfies Hypothesis \ref{hyp2}, then $\dnf$ is a stable independence relation on $\km$. 
\end{theorem}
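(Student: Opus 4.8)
The plan is to assemble the three components that constitute the definition of a stable independence relation, each of which has already been established in the preceding results of this section. Recall that $\dnf$ is a stable independence relation on $\km$ precisely when it is weakly stable and satisfies both the witness property and local character. So the argument reduces to verifying these three conditions in turn and then invoking the definition.

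First, weak stability — that is, symmetry, existence, uniqueness and transitivity — is exactly the content of Fact \ref{weak-stable}, which transfers the corresponding result of \cite{lrvcell} to our setting (the hypotheses there being supplied by the closure of $K$ under pushouts of $\M$-embeddings established in Lemma \ref{push}). Second, for the witness property: by definition $\dnf$ has the (right) witness property if there exists \emph{some} cardinal $\theta$ for which the $(<\theta)$-witness property holds, and Lemma \ref{s-witness} establishes the $(<\aleph_0)$-witness property, so taking $\theta=\aleph_0$ immediately yields the witness property. Third, local character is furnished directly by Lemma \ref{local}.

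With these three properties in hand, the definition of a stable independence relation is satisfied verbatim, so $\dnf$ is a stable independence relation on $\km$. I do not anticipate any genuine obstacle in this final step, since all the real work has already been carried out in Fact \ref{weak-stable} and Lemmas \ref{local} and \ref{s-witness}; the theorem is simply the conjunction of those three results packaged under the definition. The only point deserving a moment's care is the bookkeeping in the witness clause, where one must note that the abstract definition quantifies existentially over the witnessing cardinal $\theta$, so that the sharper $(<\aleph_0)$-statement of Lemma \ref{s-witness} is more than enough.
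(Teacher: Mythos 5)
Your proposal is correct and matches the paper's proof exactly: the paper also derives Theorem \ref{s-ind} as the immediate conjunction of Fact \ref{weak-stable} (weak stability), Lemma \ref{local} (local character), and Lemma \ref{s-witness} (the $(<\aleph_0)$-witness property, which trivially gives the witness property with $\theta=\aleph_0$). Your additional remark about the existential quantification over the witnessing cardinal is accurate bookkeeping but changes nothing of substance.
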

\begin{proof}
Follows from Fact \ref{weak-stable}, Lemma \ref{local} and Lemma \ref{s-witness}.
\end{proof}

\begin{remark}
 The previous theorem applied to $ \M= Pure$ answers Question 4.23 of \cite{maz2}.
\end{remark}

The next assertion follows from the previous theorem and \cite[3.1]{lrvcell}. 

\begin{cor}\label{rd-co}
$RD$-embeddings are cofibrantly generated in the class of $R$-modules, i.e., they are generated from a set of morphisms by pushouts, transfinite composition and retracts.
\end{cor}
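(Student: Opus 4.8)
The plan is to reduce the statement to the existence of a stable independence relation and then quote the cellularity criterion \cite[3.1]{lrvcell}. The first step is to place the assertion inside the framework of the paper: I would take $K$ to be the class of \emph{all} $R$-modules and $\M = RD$. By Example \ref{ex1}.(1) this pair satisfies Hypothesis \ref{hyp2}, so Lemma \ref{aec} makes $\km = (K, \leq_{RD})$ into an AEC with $\LS(\km) = \text{card}(R) + \aleph_0$, Proposition \ref{ap} supplies amalgamation, joint embedding and no maximal models, and, crucially, Theorem \ref{s-ind} equips $\km$ with a stable independence relation $\dnf$.

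The second step is to match this data to the hypotheses of \cite[3.1]{lrvcell}. The ambient category of $R$-modules with $R$-homomorphisms is locally finitely presentable, and the distinguished class of maps is the $RD$-embeddings. Lemma \ref{push} shows that this class is closed under pushouts computed in the full category of $R$-modules, while the verification of the Tarski--Vaught axioms in Lemma \ref{aec} gives closure under transfinite composition (directed unions of $\leq_{RD}$-chains are again $\leq_{RD}$). These are exactly the structural inputs that the cellularity theorem consumes alongside a stable independence relation; feeding it the relation $\dnf$ yields that the $RD$-embeddings equal the closure of a set of morphisms under pushouts, transfinite composition and retracts, which is the assertion.

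The step that carries the real weight is not in this proof at all but upstream: it is Theorem \ref{s-ind}, and in particular the $(<\aleph_0)$-witness property of Lemma \ref{s-witness}, that makes the independence relation strong enough for \cite[3.1]{lrvcell} to apply. Here the only genuine obstacle is bookkeeping: confirming that the abstract notion of stable independence invoked in \cite[3.1]{lrvcell} is literally the one produced by Theorem \ref{s-ind}, and that ``cofibrantly generated'' there unwinds to the closure under pushouts, transfinite composition and retracts stated in the corollary. No computation beyond these identifications should be required, since the earlier sections have already produced $\dnf$ and verified the requisite closure properties of $RD$-embeddings.
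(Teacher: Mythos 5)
Your proposal is correct and follows exactly the paper's route: the paper derives the corollary in one line from Theorem \ref{s-ind} (the stable independence relation on $(_{R}Mod, \leq_{RD})$, which applies since the pair satisfies Hypothesis \ref{hyp2} by Example \ref{ex1}.(1)) together with \cite[3.1]{lrvcell}. Your additional bookkeeping --- local presentability of $R\text{-Mod}$, closure of $RD$-embeddings under pushouts (Lemma \ref{push}) and transfinite compositions (Lemma \ref{aec}) --- is precisely the verification the paper leaves implicit when invoking the cellularity theorem.
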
 

\begin{remark}
The same result for pure embeddings was recently obtained in \cite{lprv} and the result for embeddings is well-known. 
\end{remark}

\section{Baer-like criterion for relative injective modules} 
 We use the stable independence relation obtained in the previous section to obtain a Baer-like criterion for relative injective modules. 
 
\subsection{Basic notions} We begin by recalling relative injective objects. These were first introduced in \cite{Ma} (see also \cite{AR}).

\begin{defin} Let $(\K, \M)$ be pair such that $\K$ is category and $\M$ a class of morphisms in $\K$. An object $E$ in $\K$ is $\km$-injective if for every morphism $f: A \to B$ in $\M$ and morphism $g: A \to E$ in $\K$ there is morphism $h: B \to E$ in $\K$ such that the following triangle commutes:

\[\begin{tikzcd}
A \arrow[r, "f"] \arrow[d, "g"'] & B  \arrow[ld, "h", dashed] & \hspace{-1cm} \in \M \\
E                                &                            &       
\end{tikzcd}\]
\end{defin}

\begin{remark}
In the rest of the paper $\K$ will always be a class $K$ of modules with morphisms given by $R$-homomorphisms and $\M$ will be a class of $R$-homomorphisms.  Due to this we write $(K, \M)$ instead of $(\K, \M)$ as the category $\K$ is completely determined by the class of modules $K$.
\end{remark}

We give some examples of relative injective modules.

\begin{example} \
\begin{enumerate}
\item  If $(K, \M)=(_{R}Mod, Emb)$, then the $\km$-injective modules are the injective modules.
\item  If $(K, \M)=(_{R}Mod,RD)$, then the $\km$-injective modules are the $RD$-injective modules.
\item  If $(K, \M)=(_{R}Mod, Pure)$, then the $\km$-injective modules are the pure injective modules.
\item  If $(K, \M)=(_{R}Flat, Pure)$, then the $\km$-injective modules are the flat cotorsion modules.
\item If $(K, \M)=(\s\text{-}Tor, Pure)$ where $\s\text{-}Tor$ is the class of $\s$-torsion modules (see \cite{maru}) then the $\km$-injective modules are the $K^{\s\text{-Tor}}$-pure injective  modules (see \cite[3.6]{m-tor}).
\item  If $(K, \M)=(_{R}AbsP, Emb)$ where $_{R}AbsP$ is the class of absolutely pure modules, then  the $\km$-injective modules are the injective modules.
\end{enumerate}
\end{example}

The proof of the  following proposition is standard so we will not give any details. 

\begin{prop} Assume $(K, \M)$ satisfied Hypothesis \ref{hyp2}.
\begin{enumerate}
\item The class of $\km$-injective modules is closed under direct summands and finite direct sums. 
\item If $f: E \to A$ is a $\M$-embedding, $A \in K$, and $E$ is $\km$-injective, then $f[E]$ is a direct summand of $A$ (in $K$), i.e., there is $B \in K$ such that $A = f[E] \oplus B$. 
\end{enumerate}
\end{prop}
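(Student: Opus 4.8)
The plan is to treat both parts as the standard diagram chases that characterize injective objects in any category, observing that the only point requiring the specific hypotheses is that the various modules produced remain in $K$; this is guaranteed throughout by Proposition \ref{v-basic}(1), which asserts that $K$ is closed under direct summands. Since the morphisms $g$ and $h$ in the definition of $\km$-injectivity are allowed to be arbitrary $R$-homomorphisms (only $f$ must lie in $\M$), the verifications reduce to producing the required homomorphisms by composing with projections and inclusions.

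For part (1), to handle finite direct sums I would let $E_1, E_2$ be $\km$-injective and take $f\colon A \to B$ in $\M$ together with a homomorphism $g\colon A \to E_1 \oplus E_2$. Composing with the two projections $\pi_i\colon E_1\oplus E_2 \to E_i$ gives homomorphisms $\pi_i \circ g\colon A \to E_i$, which by injectivity of $E_i$ extend along $f$ to $h_i\colon B \to E_i$ with $h_i \circ f = \pi_i \circ g$; the map $h = (h_1,h_2)\colon B \to E_1\oplus E_2$ then satisfies $h \circ f = g$, and an induction extends this to any finite direct sum. For closure under direct summands, if $E = E_1 \oplus E_2$ is $\km$-injective with inclusion $\iota\colon E_1 \to E$ and projection $\pi\colon E \to E_1$ satisfying $\pi\circ\iota = \id_{E_1}$, then given $f\colon A\to B$ in $\M$ and $g\colon A\to E_1$ I would lift $\iota\circ g$ to $h'\colon B\to E$ and set $h = \pi\circ h'$, so that $h \circ f = \pi\circ\iota\circ g = g$. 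Here $E_1 \in K$ by Proposition \ref{v-basic}(1).

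For part (2), the key observation is to test $\km$-injectivity of $E$ against the identity: applying the injectivity of $E$ to the $\M$-embedding $f\colon E\to A$ and the homomorphism $\id_E\colon E\to E$ produces $h\colon A\to E$ with $h\circ f = \id_E$, so that $f$ splits. The idempotent $e := f\circ h\colon A\to A$ then satisfies $e[A] = f[E]$, whence $A = f[E] \oplus B$ with $B = \ker(e)$, exactly as recorded in the remark on splitting embeddings preceding Proposition \ref{v-basic}. Finally, since $B$ is a direct summand of $A \in K$, Proposition \ref{v-basic}(1) gives $B \in K$, which is the required decomposition inside $K$.

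I do not expect a genuine obstacle here: the arguments are the formal properties of injective objects and split idempotents. The only point that needs the hypotheses rather than pure category theory is the membership of the complementary summand $B$ (and of $E_1$ in part (1)) in the class $K$, and this is precisely what closure under direct summands supplies. This is why the proposition can fairly be described as standard, and a full proof would simply record the diagram chase above.
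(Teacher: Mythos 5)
Your proof is correct and matches the paper's intent exactly: the paper omits the argument altogether, saying only that it is standard, and your diagram chases (extending $\pi_i\circ g$ along $f$ and pairing the extensions for part (1), and splitting $f$ via $h\circ f = \id_E$ with the idempotent $e=f\circ h$ giving $A=f[E]\oplus\ker(e)$ for part (2)) are precisely that standard argument. You also correctly identify the only point where the hypotheses enter---that $E_1$ and the complement $B$ remain in $K$---and discharge it via Proposition \ref{v-basic}(1).
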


It is worth emphasizing that the previous result holds for arbitrary $\M$ as long as (1) and (3) of  Hypothesis \ref{hyp2} hold and $\cm$ contains split embeddings.

Recall that given a module $M$ and a set $I$, $M^{(I)}$ is the direct sum of $I$ many copies of $M$. 

\begin{defin}
$E$ is $\Sigma$-$\km$-injective if and only if $E^{(I)}$ is $\km$-injective for every set $I$. 
\end{defin}

We will show at the end the next subsection  that, under Hypothesis \ref{hyp2}, one only needs to test small sets to determine if a module is $\Sigma$-$\km$-injective.

\subsection{Baer-like criterion} We need the following key result to show a Baer-like criterion for relative injective modules. The result is proved similarly to \cite[8.14]{lrv1}, but both the hypothesis and conclusion are stronger.

\begin{lemma}\label{deco} Assume $(K, \M)$ satisfies Hypothesis \ref{hyp2}.
If $A \leq_\M B$  and $\| B \| = \lambda > \text{card}(R) + \aleph_0$, then there are  $\{ A_i : i < \lambda \}$, $\{ B_i : i < \lambda \}$ increasing and continuous chains such that

\begin{enumerate}
\item For all $i <\lambda$, $A_i \leq_\M  A$, $B_i \leq_\M B$, $A_i \leq_\M B_i$. 
\item For all $i <\lambda$, $\|A _ i \|, \| B_i \|  < \lambda $. 
\item If $i< j $, then $B_i \dnf^{B_j}_{A_i} A_j$.
\item $A=\bigcup_{i < \lambda} A_i $ and  $B=\bigcup_{i < \lambda} B_i $.
\end{enumerate}

\end{lemma}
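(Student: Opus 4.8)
The plan is to build a single increasing continuous chain $\{B_i : i < \lambda\}$ of $\M$-submodels of $B$, each of size $< \lambda$, with $\bigcup_{i<\lambda} B_i = B$, and then to \emph{define} $A_i := B_i \cap A$ rather than constructing the two chains in parallel. The whole argument rests on one observation about $\dnf$: if $B' \dnf^{B}_{A'} A$ (so in particular $A' \leq_\M B'$ and $A' \leq_\M A$), then $B' \cap A = A'$. Indeed, the map $t \colon (B' \oplus A)/\{(m,-m) : m \in A'\} \to B$, $t([(b,a)]) = b+a$, is a $\M$-embedding, hence injective; for $x \in B' \cap A$ the classes $[(x,0)]$ and $[(0,x)]$ have the same image $x$, so they are equal, which forces $x \in A'$. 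Once I arrange $B_i \dnf^{B}_{A_i} A$ at every stage with $A_i = B_i \cap A$, the half of condition (4) about $A$ becomes automatic, since $\bigcup_i A_i = (\bigcup_i B_i) \cap A = B \cap A = A$. So the construction only ever tracks the single ``maximal'' statement $B_i \dnf^{B}_{A_i} A$.

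For the successor step, fix an enumeration $|B| = \{b_\alpha : \alpha < \lambda\}$ and suppose $B_i \leq_\M B$ is given with $B_i \dnf^{B}_{A_i} A$. Using the L\"{o}wenheim-Skolem-Tarski axiom (Lemma \ref{aec}) choose a small $M_1 \leq_\M B$ containing $B_i \cup \{b_i\}$, and apply local character (Lemma \ref{local}) to $M_1, A \leq_\M B$ to obtain $M_1'$ with $M_1 \leq_\M M_1'$ and a small base $M_0$ with $M_0 \leq_\M M_1'$, $M_0 \leq_\M A$ and $M_1' \dnf^{B}_{M_0} A$. Then take a small $B_{i+1} \leq_\M M_1'$ containing $M_1 \cup M_0$; by Fact \ref{b-dnf}(1),(2) (restricting the left-hand side from $M_1'$ to $B_{i+1}$) this yields $B_{i+1} \dnf^{B}_{M_0} A$, and the intersection observation identifies $M_0$ with $B_{i+1} \cap A =: A_{i+1}$ (so $M_0$ is already a $\M$-submodel and I need not worry that intersections of pure submodules can fail to be pure). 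I would keep the sizes controlled by the bound $\|B_i\| \leq |i| + \text{card}(R) + \aleph_0$, which is preserved at successors because local character only enlarges the base by $\text{card}(R)+\aleph_0$; since $\lambda > \text{card}(R)+\aleph_0$ \emph{is a cardinal}, $|i| < \lambda$ gives $\|B_i\| < \lambda$ even when $\lambda$ is singular, which is precisely why I bound by $|i|$ rather than merely by ``${}<\lambda$''.

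At a limit $\delta$ I set $B_\delta = \bigcup_{i<\delta} B_i$ and $A_\delta = \bigcup_{i<\delta} A_i = B_\delta \cap A$, and the point is to recover $B_\delta \dnf^{B}_{A_\delta} A$. I would do this directly from the pushout definition of $\dnf$: writing $P_i = (B_i \oplus A)/\{(m,-m) : m \in A_i\}$ with canonical maps $t_i \colon P_i \to B$, each $t_i$ is a $\M$-embedding, so by coherence the connecting maps $P_i \to P_j$ are $\M$-embeddings and the images $t_i[P_i]$ form an increasing chain of $\M$-submodels of $B$. Since $P_\delta = \bigcup_i P_i$ and $t_\delta[P_\delta] = \bigcup_i t_i[P_i] \leq_\M B$ by the Tarski-Vaught axioms, the injective map $t_\delta$ is a $\M$-embedding, i.e. $B_\delta \dnf^{B}_{A_\delta} A$. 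This continuity of $\dnf$ along chains, rather than any forking bookkeeping, is the step I expect to be the main obstacle to state cleanly, as it is the only place where the argument is not a purely local application of the independence axioms.

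Finally, conditions (1) and (2) are read off directly (with coherence and the Tarski-Vaught axioms giving $A_i \leq_\M B_i$ and $A_i \leq_\M A$ at limits), and it remains to deduce the pairwise statement (3) from the single independence. For $i < j$, monotonicity (Fact \ref{b-dnf}(2)) restricts the right-hand side of $B_i \dnf^{B}_{A_i} A$ from $A$ to $A_j$, giving $B_i \dnf^{B}_{A_i} A_j$; then I would shrink the ambient model from $B$ to $B_j$, which is legitimate because the pushout $P$ of $(A_i \to B_i,\, A_i \to A_j)$ maps into $B_j$ (its image is $\{b+a : b \in B_i,\, a \in A_j\} \subseteq B_j$), and a $\M$-embedding into $B$ that factors through $B_j \leq_\M B$ is by coherence a $\M$-embedding into $B_j$. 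This yields $B_i \dnf^{B_j}_{A_i} A_j$, completing (3).
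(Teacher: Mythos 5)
Your proof is correct, and while it keeps the paper's overall skeleton (build chains by recursion, maintain the single maximal statement $B_i \dnf^{B}_{A_i} A$ via Lemma \ref{local}, then deduce condition (3) by monotonicity and coherence), it differs genuinely at both delicate points. Since Lemma \ref{local} gives no control over the base it produces, the paper's successor step is an internal $\omega$-iteration (chains $\{M_k\},\{N_k\}$ with $\dnfb{M_{k+1}}{N_k}{A}{B}$) whose union finally swallows the base; your intersection observation --- that $B' \dnf^{B}_{A'} A$ forces $B' \cap A = A'$ by injectivity of the pushout map --- makes this unnecessary, because the new base $M_0 = B_{i+1}\cap A$ automatically contains $A_i = B_i\cap A$, so a single application of local character per successor suffices and the paper's bookkeeping clause ``$b_i\in A_{i+1}$ when $b_i\in A$'' becomes automatic. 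At limits the paper argues abstractly: by symmetry it reduces $B_\delta \dnf^{B}_{A_\delta} A$ to $\dnfb{A_\delta}{A}{D}{B}$ for finite $D\subseteq |B_\delta|$, using monotonicity, base monotonicity and the $(<\aleph_0)$-witness property of Lemma \ref{s-witness}; your colimit argument (the images $t_i[P_i]=B_i+A$ form a $\leq_\M$-increasing chain, so $t_\delta[P_\delta]=\bigcup_i t_i[P_i]\leq_\M B$ by the Tarski--Vaught axioms, and $t_\delta$ is injective because $B_\delta\cap A=A_\delta$) is more elementary and avoids the witness property entirely. What each buys: your route is shorter and needs only weak stability and local character; the paper's route never uses the concrete pushout description of $\dnf$, which is exactly what allows the remark following Lemma \ref{deco} to extend the argument to an arbitrary stable independence relation with the $(<\aleph_0)$-witness property --- your limit step, by contrast, is tied to modules. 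Two harmless omissions on your side: the base step (run your successor step from the L\"{o}wenheim--Skolem--Tarski hull of $\{b_0\}$), and a one-line check that $t_\delta$ is injective, which your own computation $B_\delta\cap A=\bigcup_i(B_i\cap A)=A_\delta$ supplies; you also spell out the derivation of (3), which the paper dismisses as easy, and your coherence argument there is exactly right.
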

\begin{proof}
Let $A \leq_\M B$  and $\| B \| > \lambda$. Let $B = \{ b_i : i < \lambda \}$ be an enumeration of $B$. We build  $\{ A_i : i < \lambda \}$, $\{ B_i : i < \lambda \}$ increasing continuous chains by induction such that:

\begin{enumerate}
\item For all $i <\lambda$, $A_i \leq_\M A$, $B_i \leq_\M B$, $A_i \leq_\M B_i$. 
\item For all $i <\lambda$, $\|A _ i \|, \| B_i \|  \leq \text{card}(R) + \aleph_0 +  |i|$. 
\item For all $i <\lambda$, $B_i \dnf^{B}_{A_i} A$
\item For all $i < \lambda$, $b_i \in B_{i+1}$ and if $b_i \in A$, then $b_i \in A_{i+1}$. 
\end{enumerate}

It is easy to show that the above four conditions are enough, so we only need to do the construction.

In the base step, let $A_0$ be the structure obtained by applying the L\"{o}wenheim-Skolem-Tarski axiom to $\emptyset$ in $A$.  Let $B_0 =A_0$. It is easy to see that $A_0, B_0$ are as needed, see for example \cite[3.12]{lrv1}. We do the induction step.

Let $i < \lambda$ be limit ordinal. Let $A_i = \bigcup_{j < i} A_j$ and $B_i = \bigcup_{j < i} B_j$. The only condition that requires an argument is  that $B_i \dnf^{B}_{A_i} A$. By symmetry and the $(<\aleph_0)$-witness property it is enough to show that $ \dnfb{A_i}{A}{D}{B} $ for every $D \subseteq |B_i|$ such that $D$ is finite. Let $D$ be a finite subset of $B_i$, then there is an $j < i$ such that $D \subseteq |B_j|$. As $B_j \dnf^{B}_{A_{j}} A$ by induction hypothesis, $\dnfb{A_{j}}{D}{A}{B}$ by monotonicity. Then $\dnfb{A_i}{D}{A}{B}$ by base monotonicity. Therefore, $ \dnfb{A_i}{A}{D}{B}$ by symmetry. 

Let $i = j +1 <\lambda$. We build increasing chains $\{M_ k: k < \omega\}, \{N_k : k < \omega \}$ such that:
\begin{enumerate}
\item $b_j \in N_0$ and if $b_j \in A$ then $b_j \in M_0$. 
\item $A_{j} \leq_\M M_0$ and $B_j \leq_\M N_0$
\item For all $k < \omega$, $M_k \leq_\M A$ and $M_k \leq_\M N_k\leq_\M B$.
\item For all $k < \omega$, $\| M_k \| \leq \| N_k \| \leq |j+1| + \text{card}(R) + \aleph_0$.
\item For all $k < \omega$,  $\dnfb{M_{k+1}}{N_k}{A}{B}$.
\end{enumerate}

\fbox{Enough} Let $A_{j+1} = \bigcup_{k < \omega} M_k$ and $B_{j+1} = \bigcup_{k < \omega} N_k$. The only condition that requires an argument is  that $B_{j+1} \dnf^{B}_{A_{j+1}} A$, but this can be shown as in the limit case. 

\fbox{Construction} In the base step let $M_0$ be the structure obtained by applying the L\"{o}wenheim-Skolem-Tarski axiom to $A_j \cup (A \cap \{ b_j\})$ in $A$ and $N_0$  be the structure obtained by applying the L\"{o}wenheim-Skolem-Tarski axiom to $M_0 \cup B_j \cup \{ b_j\}$ in $B$. $M_0$ and $N_0$ are as needed as $\LS(\km) = \text{card}(R) + \aleph_0$. We do the induction step.

By induction we have that $N_k \leq_\M B$ and $A \leq_\M B$. Then there are $M^*, N^* \in K$ such that $M^* \leq_\M N^*, A \leq_\M B$, $N_k \leq_\M N^*$, $\| M^* \| \leq  \| N_k \| + \text{card}(R) + \aleph_0$ and $N^* \dnf^B_{M^*}  A$ by Lemma \ref{local}.

Let $M_{k+1}$ be the structure obtained by applying the L\"{o}wenheim-Skolem-Tarski axiom to $M^* \cup M_k$ in $A$. We show that $M_{k+1}$ is as needed. Observe that $\| M_{k+1}\| \leq |j+1| + \text{card}(R) + \aleph_0$ because $\| M^* \| \leq  \| N_k \| + \text{card}(R) + \aleph_0$ and by the induction hypothesis. Moreover, since $N^* \dnf^B_{M^*}  A$ and $N_k \leq_\M N^*$ using monotonicity and base monotonicity one can show that $\dnfb{M_{k+1}} {N_k}{A}{B}$. Let $N_{k+1}$ be the structure obtained by applying the L\"{o}wenheim-Skolem-Tarski axiom to $N_k \cup M_{k+1}$ in $B$. It is easy to see that $N_{k+1}$ is as needed.  \end{proof}

\begin{remark}  Suppose $\Kk$ is an AEC with a stable independence relation $\dnf$ having the $(<\aleph_0)$-witness property. Let $\Kk_{NF}$ be defined in terms of the stable independence relation as in \cite[8.12]{lrv1}. The argument of Lemma \ref{deco} can be used to show that  $\Kk_{NF}$ is an abstract elementary class with the exception that  $\leq_{NF}$ might not refine the substructure relation. 
\end{remark} 

We present the main result of the section. 

\begin{theorem}\label{s-inj}
Assume $(K, \M)$ satisfies Hypothesis \ref{hyp2}. $E$ is a $\K_\M$-injective module if and only if if for every  $f: A \to B$ a $\M$-embedding with $A, B \in K$, $\|A\|, \| B \| \leq  \text{card}(R) + \aleph_0$ and $g: A \to E$ a $R$-homomorphism there is $h : B \to E$ a $R$-homomorphism such that $h \circ f = g$. 
\end{theorem}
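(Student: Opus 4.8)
One direction is trivial: if $E$ is $\km$-injective then the lifting property holds for \emph{all} $\M$-embeddings $f : A \to B$ with $A, B \in K$, in particular for the small ones with $\|A\|, \|B\| \leq \operatorname{card}(R) + \aleph_0$. So the content is the converse. I would assume $E$ satisfies the small lifting property and prove it is $\km$-injective, i.e., that lifting holds for an arbitrary $\M$-embedding $f : A \to B$ with $A, B \in K$ and an arbitrary $g : A \to E$. Without loss of generality I would take $f$ to be an inclusion $A \leq_\M B$ and argue by induction on $\lambda = \|B\|$. The base case $\lambda \leq \operatorname{card}(R) + \aleph_0$ is exactly the hypothesis.

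\emph{The decomposition step.} For $\lambda > \operatorname{card}(R) + \aleph_0$, the tool is Lemma \ref{deco}: it produces increasing continuous chains $\{A_i : i < \lambda\}$ and $\{B_i : i < \lambda\}$ with $A_i \leq_\M A$, $B_i \leq_\M B$, $A_i \leq_\M B_i$, each of size $<\lambda$, with $A = \bigcup_i A_i$, $B = \bigcup_i B_i$, and crucially the independence condition $B_i \dnf^{B_j}_{A_i} A_j$ for $i < j$. I would build a continuous increasing chain of partial lifts $\{h_i : B_i \to E : i < \lambda\}$ with $h_i \rest_{A_i} = g \rest_{A_i}$ and $h_j \rest_{B_i} = h_i$ for $i < j$, then set $h = \bigcup_{i<\lambda} h_i$. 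At limit stages the lift is just the union (using continuity of the chains and that $E$ receives a well-defined map since the $h_i$ cohere). Taking $h = \bigcup h_i$ at the end gives $h : B \to E$ with $h \rest_A = g$, as desired.

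\emph{The successor step — the main obstacle.} The heart of the matter is passing from $h_i : B_i \to E$ to $h_{i+1} : B_{i+1} \to E$, where I must extend $h_i$ along $B_i \leq_\M B_{i+1}$ while simultaneously respecting the constraint $h_{i+1}\rest_{A_{i+1}} = g\rest_{A_{i+1}}$. Since $B_i$ and $A_{i+1}$ are \emph{independent} over $A_i$ inside $B_{i+1}$ (that is $B_i \dnf^{B_{i+1}}_{A_i} A_{i+1}$), the pushout $A_i \to B_i$, $A_i \to A_{i+1}$ with canonical map $t : P \to B_{i+1}$ is a $\M$-embedding by the definition of $\dnf$. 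I would form the map $B_i \oplus A_{i+1} \to E$ induced by $h_i$ and $g\rest_{A_{i+1}}$, check these agree on $A_i$ (they both restrict to $g\rest_{A_i}$, as $h_i\rest_{A_i} = g\rest_{A_i}$), and thereby obtain a homomorphism $P \to E$ factoring through the pushout. Then $t : P \to B_{i+1}$ is a $\M$-embedding between modules of size $<\lambda$, so by the inductive hypothesis the map $P \to E$ extends along $t$ to $h_{i+1} : B_{i+1} \to E$. This $h_{i+1}$ restricts correctly to both $B_i$ and $A_{i+1}$ by construction. The delicate point to verify carefully is that the induced map out of the pushout is well-defined — this is precisely where independence (making $t$ a $\M$-embedding) and the compatibility $h_i\rest_{A_i} = g\rest_{A_i}$ combine — and that applying the pushout universal property yields a map whose domain $P$ (not $B_{i+1}$ itself, which has size possibly approaching $\lambda$) has cardinality $<\lambda$, so that the induction hypothesis is genuinely applicable. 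This mirrors the strategy of \cite[3.1]{lrvcell}, with Lemma \ref{deco} supplying the independent decomposition and Lemma \ref{push} ensuring $P \in K$.
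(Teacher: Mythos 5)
Your proposal is correct and follows essentially the same route as the paper's proof: induction on $\|B\|$ with base case the hypothesis, Lemma \ref{deco} supplying the independent filtration, unions at limits, and at successors the pushout of $A_j \leq_\M A_{j+1}$, $B_j$ together with $B_j \dnf^{B_{j+1}}_{A_j} A_{j+1}$ (which makes $t : P \to B_{j+1}$ an $\M$-embedding) and the inductive hypothesis applied to $t$. One small clarification: what makes the inductive hypothesis applicable at that step is $\|B_{j+1}\| < \lambda$, which Lemma \ref{deco} guarantees directly --- the cardinality of $P$ plays no role --- but since you also invoke exactly this, the argument goes through as you wrote it.
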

\begin{proof}
The forward direction is clear so we prove the backward direction. We show by induction on $\lambda \geq \text{card}(R) + \aleph_0$ that for every $A \leq_\M B$, $g: A \to E$ a $R$-homomorphism and $\| B \| \leq \lambda$ there is $g: B \to E$ extending $f$. This is enough as $K$ is closed under isomorphisms.

The case when $\lambda =  \text{card}(R) + \aleph_0$ is given by assumption, so we do the induction step. Let  $A \leq_\M B$, $g: A \to E$ a $R$-homomorphism and $\| B \| \leq \lambda$. We may assume  $\| B \| = \lambda > \text{card}(R) + \aleph_0$.  Since $A \leq_\M B$, it 
follows from Lemma \ref{deco} that there are  $\{ A_i : i < \lambda \}$, $\{ B_i : i < \lambda \}$ increasing and continuous chains such that:

\begin{enumerate}
\item For all $i <\lambda$, $A_i \leq_\M A$, $B_i \leq_\M B$, $A_i \leq_\M B_i$. 
\item For all $i <\lambda$, $\|A _ i \|, \| B_i \|  < \lambda $. 
\item If $i< j $, then $B_i \dnf^{B_j}_{A_i} A_j$.
\item $A = \bigcup_{i < \lambda} A_i $ and  $B= \bigcup_{i < \lambda} B_i$.
\end{enumerate}

For every $i < \lambda$, let $g_i = g \upharpoonright_{A_i} : A_i \to E$.  We build $\{ h_i : i < \lambda\}$ by recursion on $i < \lambda$:

\begin{enumerate}
\item For all $i < \lambda$, $h_i: B_i \to E$.
\item  For all $i < \lambda$, $g_i \subseteq h_i$
\item If $i < j $, then $h_i \subseteq h_j$. 
\end{enumerate}

This is enough by taking $h = \bigcup_{i < \lambda} h_i: B \to E$. $h$ is a homomorphism with domain $B$ and it extends $g$ by Condition (2). We do the construction.

 \underline{Base step}: Since $A_0 \leq_\M B_0$, $g_0: A_0 \to E$ and $ \| B_0 \| < \lambda$, there is $h_0: B_0 \to E$ extending $g_0$ by the induction hypothesis.

 \underline{Induction step:} If $i$ is limit, let $h_i = \bigcup_{j < i} h_j$. We are left with the case when $i = j +1$ which will requite substantial work.
 
  Assume we have $h_j: B_j \to E$. Let $(P, p_A: A_{j+1} \to P, p_B: B_{j} \to P)$ be the pushout of $A_j \leq_p A_{j+1}, B_j$. 
 
 Since $B_j \dnf^{B_{j+1}}_{A_j} A_{j+1}$, we have the following commutative diagram 
 
 $$\begin{tikzcd}
                                                   &                                                                                  &  & B_{j+1} \\
B_j \arrow[r, "p_B"'] \arrow[rrru, bend left] & { P} \arrow[rru, "t" description] &  &   \\
A_j \arrow[u] \arrow[r]                       & A_{j+1} \arrow[u, "p_A"] \arrow[rruu, bend right]                                      &  &  
\end{tikzcd}$$ where  $t: P \to B_{j+1}$ is a $\M$-embedding. 
 
 Moreover, by Condition (2) $g_{j+1} \upharpoonright_{A_j} = h_j\upharpoonright_{A_j}$, so there is $s: P \to E$ a $R$-homomorphism such that the following diagram commutes:

  \[\begin{tikzcd}
                                                   &                                                                                  &  & E \\
B_j \arrow[r, "p_B"'] \arrow[rrru, bend left, "h_j"] & { P} \arrow[rru, "s" description] &  &   \\
A_j \arrow[u] \arrow[r]                       & A_{j+1} \arrow[u, "p_A"] \arrow[rruu, bend right, "g_{j+1}"]                                      &  &  
\end{tikzcd}\]

Since $\| B_{j+1} \| <\lambda$, $t: P \to B_{j+1}$ is a $\M$-embedding and $s: P \to E$ is a $R$-homomorphism, by induction hypothesis there is $h_{j+1}: B_{j+1} \to E$ such that $h_{j+1} \circ t= s$. Using the above two diagrams one can show that $h_{j+1}$ is as needed.

\end{proof}

\begin{remark} Assume $(K, \M)$ satisfies Hypothesis \ref{hyp2} and let $\K= (K, {}_RMorphisms)$. It follows from Theorem \ref{s-ind} and \cite[3.1]{lrvcell}, that if $\K$ is \emph{cocomplete} then $\M$-morphisms are cofibrantly generated in $\K$ , i..e, they are generated from a set of morphisms by pushouts, transfinite composition and retracts.  If $\M$ is cofibrantly generated in $\K$,  then  there is a Baer-like criterion for $\K_\M$-injectivity, i.e., a cardinal bound to test it.  Nevertheless \cite[3.1]{lrvcell} does not yield our bound of $\text{card}(R)+\aleph_0$.

Moreover, \cite[3.1]{lrvcell} asserts that if $\M$ is cofibrantly generated in $\K$ then $\K_\M$ has a stable independence relation (Theorem \ref{s-ind}). 
\end{remark}

\begin{cor}\
\begin{enumerate}
\item $E$ is injective  if and only if for every  $f: A \to B$ an embedding with $\|A\|, \| B \| \leq  \text{card}(R) + \aleph_0$ and $g: A \to E$ a $R$-homomorphism there is $h : B \to E$ such that $h \circ f = g$. 
\item $E$ is $RD$-injective  if and only if for every  $f: A \to B$ a $RD$-embedding with $\|A\|, \| B \| \leq  \text{card}(R) + \aleph_0$ and $g: A \to E$ a $R$-homomorphism there is $h : B \to E$ such that $h \circ f = g$. 
\item $E$ is pure injective if and only if for every  $f: A \to B$ a pure embedding with $\|A\|, \| B \| \leq  \text{card}(R) + \aleph_0$ and $g: A \to E$ a $R$-homomorphism there is $h : B \to E$ such that $h \circ f = g$. 
\item Let $F$ be a flat module. $F$ is cotorsion if and only if for every  $f: A \to B$ a pure embedding with $A, B$ flat, $\|A\|, \| B \| \leq  \text{card}(R) + \aleph_0$ and $g: A \to F$ a $R$-homomorphism there is $h : B \to F$ such that $h \circ f = g$. 
\item  Let $T$ be a $\s$-torsion module. $T$ is $K^{\s\text{-Tor}}$-pure injective if and only if for every  $f: A \to B$ a pure embedding with $A, B$ $\s$-torsion modules, $\|A\|, \| B \| \leq  \text{card}(R) + \aleph_0$ and $g: A \to T$ a $R$-homomorphism there is $h : B \to T$ such that $h \circ f = g$. 
\end{enumerate}
\end{cor}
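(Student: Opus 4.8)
The plan is to read off each of the five equivalences as a special case of Theorem \ref{s-inj}, applied to an appropriately chosen pair $(K,\M)$. For any such pair satisfying Hypothesis \ref{hyp2}, Theorem \ref{s-inj} says that $\km$-injectivity is detected by the test maps $f\colon A\to B$ with $A,B\in K$ and $\|A\|,\|B\|\leq\text{card}(R)+\aleph_0$; so once I have (a) that the pair satisfies Hypothesis \ref{hyp2} and (b) that the $\km$-injective modules of the pair coincide with the class named in the corresponding item, the item follows verbatim. Both (a) and (b) are already supplied above: (a) by Example \ref{ex1}, and (b) by the list of examples of relative injective modules immediately preceding the corollary.

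For items (1), (2) and (3) I would take $(K,\M)$ to be $({}_{R}Mod,Emb)$, $({}_{R}Mod,RD)$ and $({}_{R}Mod,Pure)$ respectively. Each satisfies Hypothesis \ref{hyp2} by Example \ref{ex1}.(1), and in these three cases the $\km$-injective modules are, in order, the injective, the $RD$-injective and the pure injective modules; Theorem \ref{s-inj} then gives exactly the three stated criteria. For item (4) I would take $(K,\M)=({}_{R}Flat,Pure)$ and for item (5) the pair $(\s\text{-}Tor,Pure)$. The one point needing a word is that these classes satisfy Hypothesis \ref{hyp2}; both do, since each is an $F$-class paired with pure embeddings (Example \ref{ex1}.(3)). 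Their $\km$-injective modules are the flat cotorsion modules and the $K^{\s\text{-Tor}}$-pure-injective modules respectively, and the side hypotheses ``$A,B$ flat'' and ``$A,B$ $\s$-torsion'' in the corollary are precisely the requirement $A,B\in K$. Applying Theorem \ref{s-inj} to each pair produces the two remaining criteria.

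Since the whole argument is bookkeeping — pairing each item with the correct $(K,\M)$ and invoking (a) and (b) — I do not expect a genuine obstacle inside the corollary itself. The only substantive content sits in the identifications (b) for items (4) and (5), which rest on external module-theoretic facts (the equivalence, among flat modules, of relative pure-injectivity with cotorsion, and \cite[3.6]{m-tor} for the $\s$-torsion case); but as these identifications are already granted in the examples above, the corollary drops out immediately from Theorem \ref{s-inj}.
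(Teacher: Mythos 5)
Your proposal is correct and follows the paper's proof exactly: the paper likewise obtains items (1)--(3) and (5) by applying Theorem \ref{s-inj} to $({}_{R}Mod, Emb)$, $({}_{R}Mod, RD)$, $({}_{R}Mod, Pure)$ and $(\s\text{-Tor}, Pure)$, and item (4) by applying it to $({}_{R}Flat, Pure)$ together with the identification of flat cotorsion modules as the $\km$-injectives of that pair. Your additional bookkeeping (citing Example \ref{ex1}.(3) for Hypothesis \ref{hyp2} and noting that the side conditions ``$A,B$ flat'' and ``$A,B$ $\s$-torsion'' encode $A,B\in K$) is accurate and merely makes explicit what the paper leaves implicit.
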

\begin{proof} 
(1) through (3) and (5) follow directly by applying the previous result to $({}_RMod, Emb)$, $({}_RMod, RD)$, $({}_RMod, Pure)$ and $(\s\text{-Tor}, Pure)$ respectively. 

(4) follows by applying the  previous result to $(R\text{-Flat}, Pure)$ and then using the fact that cotorsion flat modules are the $\km$-injectives in the class of flat modules with pure embeddings. \end{proof}

\begin{remark}   \
\begin{enumerate}
\item of the previous corollary is a weakening of Baer's Criterion \cite{baer}.
\item of the previous corollary is new.
\item  of the previous corollary was obtained using algebraic methods in \cite[1.3]{satr}.
\item of the previous corollary can be obtain using a similar argument to that of \cite[1.1]{satr} using that the class of flat modules is $(\text{card}(R) + \aleph_0)^+$-deconstructible. This is last result is a deep result in module theory (see for example \cite[6.17]{gotr}).
\item of the previous corollary is new.
\end{enumerate}

It is worth mentioning that the methods used to obtain the previously known results are different for each class of modules. 
\end{remark}

\begin{lemma}
Assume $(K, \M)$ satisfies Hypothesis \ref{hyp2}. $E$ is a $\Sigma$-$\K_\M$-injective module if and only if $E^{(\text{card}(R) + \aleph_0)}$ is a $\K_\M$-injective module.
\end{lemma}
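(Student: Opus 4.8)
The forward direction is immediate from the definition: if $E$ is $\Sigma$-$\K_\M$-injective, then $E^{(I)}$ is $\K_\M$-injective for every set $I$, in particular for any $I$ of cardinality $\kappa := \text{card}(R)+\aleph_0$. So the entire content lies in the converse, and the plan is to combine the Baer-like criterion (Theorem \ref{s-inj}) with a finite-support argument to push $\K_\M$-injectivity of $E^{(\kappa)}$ up to $E^{(I)}$ for an arbitrary set $I$.

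First I would fix an arbitrary set $I$; note $E^{(I)} \in K$ since $K$ is closed under direct sums. By Theorem \ref{s-inj}, to prove that $E^{(I)}$ is $\K_\M$-injective it suffices to extend every $R$-homomorphism $g \colon A \to E^{(I)}$ along every $\M$-embedding $f \colon A \to B$ with $A, B \in K$ and $\|A\|, \|B\| \leq \kappa$. This cardinality bound is exactly what makes the argument run.

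The key step is the observation that, since every element of the direct sum $E^{(I)}$ has finite support, the image $g[A]$ lives in a small sub-sum: setting $J = \bigcup_{a \in A} \operatorname{supp}(g(a)) \subseteq I$, we get $|J| \leq \|A\| \cdot \aleph_0 \leq \kappa$ and $g[A] \subseteq E^{(J)}$. Fixing an injection $J \hookrightarrow \kappa$ realizes $E^{(J)}$ as a direct summand of $E^{(\kappa)}$, which is $\K_\M$-injective by hypothesis; since the class of $\K_\M$-injective modules is closed under direct summands (as shown in the proposition above), $E^{(J)}$ is itself $\K_\M$-injective. I would then regard $g$ as a homomorphism $A \to E^{(J)}$ and apply $\K_\M$-injectivity of $E^{(J)}$ to $f$, obtaining $h \colon B \to E^{(J)}$ with $h \circ f = g$; post-composing with the inclusion $E^{(J)} \hookrightarrow E^{(I)}$ yields the desired extension of $g$ to $B$. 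Theorem \ref{s-inj} then gives that $E^{(I)}$ is $\K_\M$-injective, and since $I$ was arbitrary, $E$ is $\Sigma$-$\K_\M$-injective.

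The crux of the proof, and the only step I expect to require genuine care, is this reduction to a small index set $J$. It succeeds precisely because the Baer-like criterion bounds $\|A\|$ by $\kappa$, after which finiteness of supports confines $g[A]$ to at most $\kappa$ coordinates and hence to a summand of $E^{(\kappa)}$. Without Theorem \ref{s-inj}, the support of $g[A]$ could be cofinal in an arbitrarily large $I$ and there would be no way to factor $g$ through a fixed power of $E$, so the whole reduction hinges on that cardinality collapse.
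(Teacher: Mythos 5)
Your proposal is correct and follows essentially the same route as the paper: both invoke Theorem \ref{s-inj} to reduce to domains $A$ of size at most $\text{card}(R)+\aleph_0$, use finiteness of supports in the direct sum to confine $g[A]$ to $E^{(J)}$ for some $J \subseteq I$ of size at most $\text{card}(R)+\aleph_0$, extend the resulting map $A \to E^{(J)}$ using $\K_\M$-injectivity of $E^{(J)}$, and compose with the inclusion $E^{(J)} \hookrightarrow E^{(I)}$. The only cosmetic difference is that the paper chooses $|J|$ exactly equal to $\text{card}(R)+\aleph_0$ so that $E^{(J)} \cong E^{(\text{card}(R)+\aleph_0)}$ outright, whereas you allow $|J|$ smaller and appeal to closure of $\K_\M$-injectives under direct summands, which the paper uses only for the trivial case $|I| \leq \text{card}(R)+\aleph_0$.
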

\begin{proof}
We only need to show the backward direction. Let $I$ be a set, we show that $E^{(I)}$ is $\K_\M$-injective. If $|I| \leq \text{card}(R) + \aleph_0$ there is nothing to show as  relative injectives are closed under direct summands, so assume that $|I| > \text{card}(R) + \aleph_0$. 
By Theorem \ref{s-inj} it is enough to consider $A \leq_\M B$ and $g: A \to E^{(I)}$ a $R$-homomorphism with $\| B \| \leq \text{card}(R) + \aleph_0$. Since $\|A\| \leq \text{card}(R) + \aleph_0$, there is $J \subseteq I$ such that $|J| = \text{card}(R) + \aleph_0$ and $\nu_J \circ \pi_J (g(a)) =g(a)$ for every $a \in A$ where   $\pi_J: E^{(I)} \to E^{(J)}$ is the canonical projection map and $\nu_J: E^{(J)} \to E^{(I)}$ the canonical inclusion. Since $ E^{(J)}$ is $\K_\M$-injective by assumption, there is $h: B \to E^{(J)}$ such that $\pi_J \circ g  = h\upharpoonright_A$.  It is clear that $\nu_J \circ h: B\to E^{(I)}$ is a required. 
\end{proof}

As an immediate corollary we get. 
\begin{cor}
Let $F$ be a flat module. $F$ is $\Sigma$-cotorsion if and only if $F^{(\text{card}(R) + \aleph_0)}$ is cotorsion.
\end{cor}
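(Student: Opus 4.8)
The plan is to specialize the preceding Lemma to the pair $(K,\M)=(R\text{-}\mathrm{Flat},\mathit{Pure})$. This pair satisfies Hypothesis \ref{hyp2} because the flat modules form an $F$-class and $F$-classes with pure embeddings satisfy the hypothesis (Example \ref{ex1}.(3)). Hence the Lemma applies directly and tells us that a flat module $F$ is $\Sigma$-$\km$-injective if and only if $F^{(\text{card}(R)+\aleph_0)}$ is $\km$-injective.

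The remaining task is purely a matter of translating the categorical language into the algebraic one. First I would recall that, in the category of flat modules with pure embeddings, the $\km$-injective modules are exactly the flat cotorsion modules; this is the same identification already used when establishing the flat-cotorsion case of the earlier corollary. Second, since flatness is preserved under arbitrary direct sums, for any flat $F$ and any index set $I$ the module $F^{(I)}$ is again flat, so that $F^{(I)}$ is $\km$-injective precisely when it is cotorsion. Combining these two observations, the statement ``$F$ is $\Sigma$-$\km$-injective'' unwinds to ``$F^{(I)}$ is cotorsion for every set $I$'', which is exactly the definition of $F$ being $\Sigma$-cotorsion, while ``$F^{(\text{card}(R)+\aleph_0)}$ is $\km$-injective'' unwinds to ``$F^{(\text{card}(R)+\aleph_0)}$ is cotorsion''.

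Substituting these reformulations into the conclusion of the Lemma yields precisely the stated equivalence. I do not expect any genuine obstacle here: the corollary is immediate once the Lemma is in hand, and the only points to verify are the standard dictionary between relative injectivity for flat modules under pure embeddings and cotorsion, together with the closure of flatness under direct sums. Both facts were already invoked in the flat-cotorsion results above, so no new ingredient is required.
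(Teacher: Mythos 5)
Your proposal is correct and matches the paper's intended argument: the paper derives this as an ``immediate corollary'' of the preceding lemma applied to $({}_{R}\mathrm{Flat}, \mathit{Pure})$, using exactly the identification of the $\K_\M$-injectives of that pair with the flat cotorsion modules (the same dictionary invoked in part (4) of the earlier corollary) together with closure of flatness under direct sums. No gaps; your verification of Hypothesis \ref{hyp2} via the $F$-class example is precisely the right justification.
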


\begin{remark}
The previous result was obtained in  \cite{gh2} for countable rings using model-theoretic methods  and in \cite[3.8]{sast} using set-theoretic methods without the assumption that $F$ is flat.
\end{remark}

It is known that  $\Sigma$-pure injective modules \cite[2.11]{prest}, flat $\Sigma$-cotorsion modules \cite{sast} and $\Sigma$-$K^{\s\text{-Tor}}$-pure injective are closed under pure submodules \cite[3.14]{m-tor}.  So it is natural to ask: 

\begin{question} Assume $\M = Pure$ and $(K, \M)$ satisfies Hypothesis \ref{hyp2}. 
Are $\Sigma$-$\K_\M$-injective modules closed under pure submodules? 
\end{question}

\subsection{Saturated models} We show a relation between saturated models and $\km$-injective modules. We  use this relation to show that there are enough $\km$-injectives. Recall that a submodule $A$ of $B$ is a retract if there is $\pi: B \to A$ such that  $\pi\rest_A = \id_A$.

 \begin{lemma}\label{sat} Assume $(K,\M)$ satisfies Hypothesis \ref{hyp2} and let $E \in K$.
$E$ is a $\K_\M$-injective module if and only if $E$ is a retract of a $(\text{card}(R) + \aleph_0)^+$-saturated model in $\K_\M$. 
\end{lemma}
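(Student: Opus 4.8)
The plan is to prove the two implications separately, using the Baer-like criterion (Theorem~\ref{s-inj}) for the backward direction and the structural description of $\K_\M$-injectives as direct summands (the Proposition asserting that a $\K_\M$-injective $\M$-submodule of a model in $K$ is a direct summand of it) for the forward direction. Throughout write $\mu := \text{card}(R) + \aleph_0$, so that $\mu = \LS(\km)$ by Lemma~\ref{aec}.

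For the forward direction, assume $E$ is $\K_\M$-injective. First I would exhibit a $\mu^+$-saturated model $M$ with $E \leq_\M M$. By Proposition~\ref{ap} the AEC $\km$ has amalgamation, joint embedding and no maximal models, and by Theorem~\ref{s-ind} it carries a stable independence relation; consequently $\km$ is stable, and there are unboundedly many $\lambda$ in which $\km$ is stable and hence for which $(\lambda, \mu^+)$-limit models exist. Fixing such a $\lambda \geq \|E\| + \mu^+$ and letting $M$ be a $(\lambda, \mu^+)$-limit model, Fact~\ref{sat-lim} makes $M$ a $\mu^+$-saturated model (note that $\mu^+$ is regular and $\mu^+ \in [\LS(\km)^+, \lambda]$), and Remark~\ref{uni-lim} makes it universal in $\Kk_\lambda$. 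Extending $E$ to some $E' \in \Kk_\lambda$ with $E \leq_\M E'$ (using closure under direct sums) and then embedding $E'$ into $M$ by universality, I may assume $E \leq_\M M$. Then $E$ is a $\K_\M$-injective $\M$-submodule of $M \in K$, so the structural Proposition gives $M = E \oplus C$ for some $C \in K$; the associated projection $M \to E$ is a retraction, witnessing that $E$ is a retract of the $\mu^+$-saturated model $M$.

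For the backward direction, it suffices to prove that every $\mu^+$-saturated model $M$ is $\K_\M$-injective: granting this, a retract $E$ of $M$ is a direct summand (its inclusion is a split $\M$-embedding), and the Proposition on closure of $\K_\M$-injectives under direct summands yields that $E$ is $\K_\M$-injective. To see that $M$ is $\K_\M$-injective I invoke the Baer-like criterion (Theorem~\ref{s-inj}): it is enough to extend an arbitrary homomorphism $g : A \to M$ along an $\M$-embedding $f : A \to B$ with $\|A\|, \|B\| \leq \mu$. By the L\"owenheim--Skolem--Tarski axiom I close $g[A]$ to a model $N_0 \leq_\M M$ with $\|N_0\| \leq \mu$, so that $g$ factors as $A \to N_0 \hookrightarrow M$. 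Forming the pushout of $f : A \to B$ and $g : A \to N_0$, Lemma~\ref{push} produces $Q \in K$ with $\|Q\| \leq \mu$, an $\M$-embedding $\bar g : N_0 \to Q$, and a map $\bar f : B \to Q$ satisfying $\bar g \circ g = \bar f \circ f$. Since $M$ is $\mu^+$-saturated and $\|N_0\|, \|Q\| \leq \mu$, universality of $M$ over $N_0$ provides a $\Kk$-embedding $q : Q \xrightarrow[N_0]{} M$ fixing $N_0$. Then $h := q \circ \bar f : B \to M$ satisfies $h \circ f = q \circ \bar g \circ g = g$, as required.

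The main obstacle is the forward direction's appeal to the existence of a $\mu^+$-saturated model containing $E$: this rests on stability of $\km$ (so that limit models exist) together with Fact~\ref{sat-lim}, and once such a model is available the conclusion is immediate from the earlier structural Proposition. In the backward direction the only genuine idea is the pushout-plus-universality device, whose purpose is to amalgamate $B$ with $M$ over the \emph{small} submodel $N_0$, keeping the amalgam $Q$ of size $\leq \mu$ so that $\mu^+$-saturation applies; the reduction from arbitrary $\M$-embeddings to small ones is supplied precisely by the Baer-like criterion.
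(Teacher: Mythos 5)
Your proof is correct and follows essentially the same route as the paper: the backward direction is exactly the paper's argument (the Baer-like criterion of Theorem~\ref{s-inj}, L\"{o}wenheim--Skolem closure of $g[A]$ to a small $N_0 \leq_\M M$, the pushout via Lemma~\ref{push}, then saturation to embed the pushout back into $M$ over $N_0$), merely reorganized so that the saturated model $M$ is shown $\km$-injective first --- which is the paper's Corollary~\ref{sat-inj} --- and the retract $E$ then inherits injectivity via closure under direct summands, whereas the paper instead composes the extension with the retraction $\pi$ directly as $h = \pi \circ s \circ \bar{g}$. The only other difference is cosmetic: for the forward direction the paper simply cites \cite[6.7]{grossberg2002} for a $(\text{card}(R)+\aleph_0)^+$-saturated extension of $E$ (which needs only amalgamation, joint embedding and no maximal models), while you rebuild such a model from stability and limit models via Fact~\ref{sat-lim} --- correct, but heavier machinery than necessary.
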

\begin{proof}
$\Rightarrow$: Let $E$ be $\K_\M$-injective, then there is $M$ $(\text{card}(R) + \aleph_0)^+$-saturated in $\K_\M$ such that $E \leq_\M M$ (see for example \cite[6.7]{grossberg2002}). Since $E$ is $\K_\M$-injective, $E$ is a direct summand of $M$. Hence $E$ is a retract of $M$.

$\Leftarrow$: Let $E \leq_\M M$ such that  $M$ is $(\text{card}(R) + \aleph_0)^+$-saturated and $\pi: M \to E$ with $\pi\rest_E = \id_E$.

 We use Theorem \ref{s-inj} to show that $E$ is $\km$-injective. Let $ A  \leq_\M B$ with $A, B \in K$, $\|A\|, \| B \| \leq  \text{card}(R) + \aleph_0$ and $g: A \to E$ a $R$-homomorphism. In particular, $g: A \to M$ is a $R$-homomorphism. As $\|A\| \leq  \text{card}(R) + \aleph_0$, using the L\"{o}wenheim-Skolem-Tarski axiom in $\km$ there is $M_0 \leq_\M M$ such that $g[A] \subseteq |M_0|$ and $\| M_0 \| \leq \text{card}(R) + \aleph_0$. So $g: A \to M_0$ is  a $R$-homomorphism.

Take the pushout of $(i: A \hookrightarrow  B, g: A \to M_0)$ in the category of modules:
$$\begin{tikzcd}
B \arrow[r, "\bar{g}"]           & P                      \\
A \arrow[u, "i"] \arrow[r, "g"'] & M_0 \arrow[u, "\bar{i}"']
\end{tikzcd}$$
 
 Since $i: A \hookrightarrow  B$ is a $\M$-embedding as $A \leq_\M, B$, $\bar{i}: M_0 \to P$ is $\M$-embedding by Lemma \ref{push}. Moreover $P \in K$ by Lemma \ref{push} and $\| P \| \leq \text{card}(R) + \aleph_0$ as $\|B\|, \| M_0\| \leq \text{card}(R) + \aleph_0$.
 
 Since $M$ is $(\text{card}(R) + \aleph_0)^+$-saturated, $M_0 \leq_\M M$ and $\bar{i}: M_0 \to P$ is a $\M$-embedding with $\|M_0\|, \| P\| \leq \text{card}(R) + \aleph_0$ and $M_0, P \in K$, there is $s: P \to M$ a $\M$-embedding  such that $s \circ \bar{i}= \id_{M_0}$. 
 
 Let $h := \pi \circ s \circ \bar{g} : B \to E$. Using the pushout diagram, that $g[A] \subseteq |M_0|$, that $s \circ \bar{i}= \id_{M_0}$ and that $\pi\upharpoonright_E = \id_E$, one can show that $g = h\rest_A$. Hence $E$ is $\K_\M$-injective. \end{proof}

\begin{cor}\label{sat-inj} Assume $(K,\M)$ satisfies Hypothesis \ref{hyp2}. 
If $M \in K$ is $\lambda$-saturated for some $\lambda \geq (\text{card}(R) + \aleph_0)^+$, then $M$ is $\K_\M$-injective. 
\end{cor}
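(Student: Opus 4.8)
The plan is to deduce this immediately from Lemma \ref{sat}, since essentially all the content has already been absorbed there. The only point requiring care is the passage from $\lambda$-saturation to $\rc$-saturation, so I would record that first: if $M$ is $\lambda$-saturated for some $\lambda \geq \rc$, then $M$ is $\rc$-saturated. This is direct from the definition of saturation. Indeed, any $N \leq_\M M$ with $\|N\| < \rc$ automatically satisfies $\|N\| < \lambda$ (as $\rc \leq \lambda$), and any $\mu < \rc$ automatically satisfies $\mu < \lambda$; hence the $\mu$-universality of $M$ over every such $N$ for all $\mu < \rc$ follows from the corresponding statement for all $\mu < \lambda$, which is guaranteed by $\lambda$-saturation. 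Thus saturation is monotone downward in the cardinal parameter.

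With $M$ now known to be $\rc$-saturated, I would apply the backward direction of Lemma \ref{sat} to the module $E = M$ itself. Every module is a retract of itself via the identity homomorphism $\id_M : M \to M$, which fixes $M$ pointwise, and $M \leq_\M M$ by reflexivity of the AEC order. Therefore $M$ exhibits itself as a retract of a $\rc$-saturated model in $\K_\M$, and Lemma \ref{sat} yields that $M$ is $\K_\M$-injective.

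There is no genuine obstacle here: the substantive work (the Baer-like criterion of Theorem \ref{s-inj} and the saturation-to-injectivity transfer) is exactly what Lemma \ref{sat} packages, and this corollary is simply the observation that a $\rc$-saturated model is its own witnessing retract. The only step meriting an explicit word is the monotonicity of saturation in the cardinal parameter described above, which is why I would state it first.
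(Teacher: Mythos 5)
Your proof is correct and is exactly the intended derivation: the paper states Corollary \ref{sat-inj} without proof as an immediate consequence of Lemma \ref{sat}, relying precisely on the downward monotonicity of saturation in the cardinal parameter and the fact that $M$ is a retract of itself via $\id_M$, as you spell out. Nothing further is needed.
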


\begin{cor} Assume $(K,\M)$ satisfies Hypothesis \ref{hyp2}. 
$\K_\M$ has enough $\K_\M$-injectives, i.e., if $A \in K$ then there is $B \in K$ such that $A \leq_\M B$ and $B$ is $\km$-injective.
\end{cor}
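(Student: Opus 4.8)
The statement to prove is that $\K_\M$ has enough $\K_\M$-injectives: for every $A \in K$ there is $B \in K$ with $A \leq_\M B$ and $B$ a $\K_\M$-injective module.

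The plan is to produce the witness $B$ as a sufficiently saturated extension of $A$ and then invoke the machinery already developed in this subsection. First I would recall that $\km$ is an AEC with amalgamation, joint embedding and no maximal models (Lemma \ref{aec} and Proposition \ref{ap}), which is exactly the context in which saturated extensions are guaranteed to exist. Given any $A \in K$, the standard existence theorem for saturated models in a stable AEC with these structural properties (see for example \cite[6.7]{grossberg2002}, which was already cited in the proof of Lemma \ref{sat}) yields a $(\text{card}(R) + \aleph_0)^+$-saturated model $M \in K$ with $A \leq_\M M$. Here one uses that $\km$ is stable, which follows from the existence of the stable independence relation established in Theorem \ref{s-ind}, so that enough saturated models exist in a tail of cardinals.

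Once such an $M$ is in hand, the work is essentially done: Corollary \ref{sat-inj} asserts precisely that any $\lambda$-saturated $M \in K$ with $\lambda \geq (\text{card}(R) + \aleph_0)^+$ is $\K_\M$-injective. Thus taking $B = M$ gives $A \leq_\M B$ with $B$ a $\K_\M$-injective module, which is what we want. In short, the proof is a two-line assembly: invoke existence of a $(\text{card}(R)+\aleph_0)^+$-saturated extension, then apply Corollary \ref{sat-inj}.

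The only genuinely nontrivial input is the existence of saturated extensions of arbitrary cardinality, and the main obstacle to watch is ensuring that $\km$ really is stable in a suitable tail of cardinals so that such saturated models exist above each $A$; this is underwritten by Theorem \ref{s-ind} together with the amalgamation, joint embedding and no-maximal-models properties from Proposition \ref{ap}. Everything else reduces to citing Corollary \ref{sat-inj}, so I expect the proof to be short, with the conceptual weight carried entirely by the earlier results.
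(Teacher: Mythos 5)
Your proof is correct and follows the paper's own argument exactly: the paper's one-line proof likewise embeds $A$ into a $(\text{card}(R)+\aleph_0)^+$-saturated model via \cite[6.7]{grossberg2002} and concludes by Corollary \ref{sat-inj}. One small remark: the appeal to stability (via Theorem \ref{s-ind}) is not actually needed here, since the existence of a $(\text{card}(R)+\aleph_0)^+$-saturated $\leq_\M$-extension of $A$ already follows from amalgamation, joint embedding and no maximal models (Proposition \ref{ap}) by iterated realization of Galois types, which is all the cited result uses.
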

\begin{proof}
Every $A \in K$ is a $\K_\M$-submodule of an $(\text{card}(R) + \aleph_0)^+$-saturated model in $\K_\M$ (see for example \cite[6.7]{grossberg2002}).
\end{proof}

\section{Superstability and noetherian categories}

In this section we will show, assuming Hypothesis \ref{hyp2}, that $\km$ is superstable if and only if $(K, \M)$ is a noetherian category.

\subsection{Noetherian categories} The class of $\km$-injective modules might not be closed under arbitrary direct sums.  If the class of   $\km$-injective modules is closed under arbitrary direct sums, we will say that $(K, \cm)$ is noetherian following \cite[p. 123]{ste}.

\begin{defin} Assume $(K, \M)$ is a pair such that $K$ is  a class of $R$-modules  and $\M$ is a class of $R$-homomorphisms for a fixed ring $R$.
 $(K, \M) $ is \emph{noetherian} if and only if every direct sum of $\km$-injectives is $\km$-injective.\footnote{For an arbitrary category $\K$ with coproducts and $\M$ a class of morphisms in $\K$, we say that $(\K, \M)$ is noetherian if every coproduct of  $\km$-injectives is $\km$-injective.} 
\end{defin}

The classical result of Bass-Papp (see for example \cite[3.46]{lam2}) states that $(_{R}Mod, Emb)$ is noetherian if and only if $R$ is a left noetherian ring.

\begin{prop}\label{add-lim}  Assume $(K, \M)$ satisfies Hypothesis \ref{hyp2}. $(K,\cm)$ is noetherian  if and only if the class of $\km$-injectives is closed under directed colimits in $\km$.
\end{prop}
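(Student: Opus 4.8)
The plan is to prove the two implications separately, the substantial content lying in the forward direction. Throughout I use freely that, under Hypothesis \ref{hyp2}, directed colimits in $\km$ are computed as directed unions along $\M$-embeddings, so that each object of the system is an $\M$-submodule of the colimit; that the class of $\km$-injectives is closed under direct summands and finite direct sums; and that every $\M$-embedding out of a $\km$-injective module splits. These are the basic facts recorded just before this subsection, together with Proposition \ref{v-basic}.

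For the easy direction, suppose the class of $\km$-injectives is closed under directed colimits in $\km$, and let $\{E_i : i \in I\}$ be $\km$-injective. I would write $\bigoplus_{i \in I} E_i$ as the directed colimit, indexed by the finite subsets $F \subseteq I$ ordered by inclusion, of the partial sums $\bigoplus_{i \in F} E_i$. Each partial sum is $\km$-injective since finite direct sums of $\km$-injectives are $\km$-injective, and for $F \subseteq F'$ the inclusion $\bigoplus_{i \in F} E_i \to \bigoplus_{i \in F'} E_i$ is split, hence an $\M$-embedding by Proposition \ref{v-basic}(2). Thus this is a directed system in $\km$, and its colimit $\bigoplus_{i \in I} E_i$ is $\km$-injective by hypothesis. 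Hence $(K, \M)$ is noetherian.

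For the forward direction, assume $(K, \M)$ is noetherian and let $E = \varinjlim_{i \in I} E_i$ be a directed colimit of $\km$-injectives in $\km$. The engine is the case of a \emph{chain}. Suppose $\langle E_\alpha : \alpha < \delta\rangle$ is a continuous $\leq_\M$-increasing chain of $\km$-injectives with union $E$. Since $E_\alpha$ is $\km$-injective and $E_\alpha \leq_\M E_{\alpha+1}$, the inclusion splits, giving $E_{\alpha+1} = E_\alpha \oplus D_\alpha$ with $D_\alpha \in K$ a $\km$-injective direct summand; set $D_{-1} = E_0$. A routine transfinite induction (the successor step from $E_{\alpha+1} = E_\alpha \oplus D_\alpha$, the limit step from continuity and the fact that every element has finite support) shows $E_\alpha = E_0 \oplus \bigoplus_{\beta < \alpha} D_\beta$ internally for every $\alpha$, whence $E = E_0 \oplus \bigoplus_{\beta < \delta} D_\beta$. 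This exhibits $E$ as a direct sum of $\km$-injectives, so $E$ is $\km$-injective by noetherianity. Noetherianity is used exactly here.

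It remains to reduce an arbitrary directed colimit to the chain case, which I regard as the main obstacle, since a directed poset need not contain a cofinal chain. I would argue by induction on $|I|$. If $I$ has a maximum then $E$ is (isomorphic to) the corresponding $E_i$ and there is nothing to prove, which also covers finite $I$. Otherwise, using that $\operatorname{cf}(I)$ is regular and closing finite (resp.\ small) subsets of $I$ under chosen upper bounds, I would build a continuous $\leq_\M$-increasing chain $\langle I_\beta : \beta < \operatorname{cf}(I)\rangle$ of \emph{directed} subsets of $I$, each of cardinality $< |I|$ and with $\bigcup_\beta I_\beta = I$. Then $F_\beta := \varinjlim_{i \in I_\beta} E_i$ is a directed colimit over an index set of strictly smaller cardinality, hence $\km$-injective by the inductive hypothesis; the $F_\beta$ form a continuous $\leq_\M$-increasing chain with union $E$, and the chain case applies. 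The delicate points are that cofinal (and the constructed) subsets of a directed poset are again directed, that $F_\beta \leq_\M E$ by coherence, and the bookkeeping ensuring $|I_\beta| < |I|$ while exhausting $I$. Granting these, noetherianity delivers $\km$-injectivity of $E$, completing the induction and the proof.
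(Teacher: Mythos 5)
Your proof is correct and takes essentially the same route as the paper: the backward direction is identical (a direct sum is the directed colimit of its finite partial sums along split, hence $\M$-, embeddings), and your forward direction matches the paper's, except that where the paper simply cites \cite[1.7]{AR} to reduce directed colimits to colimits of smooth chains, you prove that reduction by hand via induction on the cardinality of the index poset. Both arguments then use that $\M$-embeddings out of $\km$-injectives split to exhibit the chain colimit as a direct sum of $\km$-injectives, which is exactly where noetherianity is invoked.
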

\begin{proof}
$\Rightarrow$: 
It suffices to prove that the class of $\km$-injectives is closed under colimits of smooth chains of $\cm$-embeddings \cite[1.7]{AR}. Since every $\cm$-embedding $f: K\to L$ where $K$ is $\km$-injective splits, $f$ is the injection $K\to L=K\oplus C$ for $C$ a $\km$-injective. Hence colimits of smooth chains are direct sums of $\km$-injectives.

$\Leftarrow$:  A direct sum is a directed colimit of finite direct sums and split monomorphisms. Since finite direct sums of $\km$-injectives are $\km$-injective,  $(K,\cm)$ is noetherian.\end{proof}

It is worth emphasizing that the previous result holds for arbitrary $\M$ as long as (1) and (3) of  Hypothesis \ref{hyp2} hold and $\cm$ contains split embeddings.

\subsection{Main equivalence} The following result is important. We do not provide any details as the proof in the case when  $\M = Pure$ was obtained in  \cite[4.17]{maz2} and that proof carries over to $\M = Emb$ and $\M = RD$ by Fact \ref{weak-stable} and Lemma \ref{local}.

\begin{theorem}\label{statf}
 Assume $(K, \M)$ satisfies Hypothesis \ref{hyp2}. If $\lambda^{\text{card}(R) + \aleph_0}=\lambda$, then $\km$ is $\lambda$-stable.
\end{theorem}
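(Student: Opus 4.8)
The plan is to prove $\lambda$-stability in its standard form — that $|\gS(M)| \le \lambda$ for every $M \in (\km)_\lambda$ — and then pass to the existence of a $\lambda$-limit model via the equivalence recorded after the definition of stability, which is available because $\km$ has amalgamation, joint embedding and no maximal models by Proposition \ref{ap}. The whole argument runs through the stable independence relation $\dnf$ of Theorem \ref{s-ind}; concretely it uses only uniqueness (part of weak stability, Fact \ref{weak-stable}) together with local character (Lemma \ref{local}). This is exactly why the proof is uniform in $\M \in \{Emb, RD, Pure\}$: the only two inputs hold for all three choices.

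First I would reduce an arbitrary Galois type over a large model to a type over a small base. Fix $M$ with $\|M\| = \lambda$ and a type $p = \gtp(a/M) \in \gS(M)$, realized inside some $N \ge_\M M$. Applying Lemma \ref{local} with a small model $M_1 \ni a$ (of size $\le \text{card}(R)+\aleph_0$) and with $M_2 = M$, I obtain a base $M_0 \le_\M M$ of size $\le \text{card}(R)+\aleph_0$ and an $M_1' \ge_\M M_1$ with $M_1' \dnf^N_{M_0} M$; since $a \in M_1'$ this says $\dnfb{M_0}{a}{M}{N}$, i.e. $p$ does not fork over $M_0$. By uniqueness of nonforking extensions, $p$ is then the unique nonforking extension of $p\rest_{M_0}$ to $M$. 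Choosing such an $M_0$ for each $p$, the assignment $p \mapsto (M_0, p\rest_{M_0})$ is therefore injective.

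Next I would count the possible values of this assignment. The number of $\le_\M$-submodels $M_0 \le_\M M$ with $\|M_0\| \le \text{card}(R)+\aleph_0$ is bounded by the number of subsets of $|M|$ of that size, hence by $\lambda^{\text{card}(R)+\aleph_0} = \lambda$. For each such $M_0$, the elementary bound on Galois types over a model in an AEC with amalgamation (see e.g. \cite{baldwinbook09}) gives $|\gS(M_0)| \le 2^{\|M_0\| + \LS(\km)} \le 2^{\text{card}(R)+\aleph_0} \le \lambda^{\text{card}(R)+\aleph_0} = \lambda$, using $\lambda \ge 2$. Multiplying, the number of pairs $(M_0, p\rest_{M_0})$ is at most $\lambda \cdot \lambda = \lambda$, so $|\gS(M)| \le \lambda$ and $\km$ is $\lambda$-stable.

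The step I expect to be the main obstacle is the reduction in the second paragraph: one must verify that Lemma \ref{local} genuinely places the base $M_0$ inside $M$ (implicit in the notation $M_1' \dnf^N_{M_0} M$, which requires $M_0 \le_\M M$, so that $p\rest_{M_0}$ is meaningful) and that the square-level independence it produces corresponds, via Fact \ref{b-dnf}, to nonforking of the Galois type $\gtp(a/M)$ in precisely the sense to which uniqueness applies. This translation between the commutative-square formulation of $\dnf$ and the set-based relation $\dnfb{}{}{}{}$ is the content that \cite[4.17]{maz2} carries out for $\M = Pure$; by Fact \ref{weak-stable} and Lemma \ref{local} the identical bookkeeping goes through for $\M = Emb$ and $\M = RD$. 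Everything remaining is the cardinal arithmetic enabled by the hypothesis $\lambda^{\text{card}(R)+\aleph_0} = \lambda$.
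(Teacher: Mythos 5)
Your proposal is correct and follows essentially the same route as the paper: the paper proves Theorem \ref{statf} by citing \cite[4.17]{maz2} and observing that the argument there carries over to $\M = Emb$ and $\M = RD$ precisely because its only inputs are local character (Lemma \ref{local}) and uniqueness from the weakly stable independence relation (Fact \ref{weak-stable}), which is exactly the count-types-over-small-bases argument you reconstruct. Your cardinal arithmetic and the reduction via Lemma \ref{local} (including the observation that the lemma itself places $M_0 \leq_\M M_2 = M$) are sound.
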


For completeness we record the following result which can be obtained as in \cite[4.21]{maz2}, the result when $\M = Pure$ is known. 

\begin{lemma}\label{tame} 
 Assume $(K, \M)$ satisfies Hypothesis \ref{hyp2}. Then $\km$ is $(\text{card}(R) + \aleph_0)$-tame. 
\end{lemma}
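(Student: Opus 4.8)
The plan is to reduce tameness to a syntactic characterization of Galois types, exactly along the lines of \cite[4.21]{maz2} for the pure case. For each choice of $\M$ I would attach to a tuple $\bar a$ and a model $M \leq_\M N$ with $\bar a \in N$ a \emph{syntactic type} $\mathrm{stp}_\M(\bar a / M)$: the set of $pp$-formulas $\varphi(\bar x, \bar m)$ with $\bar m \in M$ satisfied by $\bar a$ when $\M = Pure$; the quantifier-free (atomic) type when $\M = Emb$; and the atomic type augmented by the $RD$-formulas $\exists w\,(rw = t(\bar x,\bar m))$ when $\M = RD$. In each case these are precisely the formulas preserved by $\M$-embeddings. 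The claim that drives the whole argument is that two Galois types over $M$ coincide if and only if the associated syntactic types over $M$ coincide.

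The easy direction is immediate: if $\gtp(\bar a_1 / M; N_1) = \gtp(\bar a_2 / M; N_2)$, then there is an amalgam identifying the two realizations over $M$, and since $R$-homomorphisms preserve $pp$-formulas (and $\M$-embeddings reflect the relevant formulas), the syntactic types agree. The hard direction --- equal syntactic type implies equal Galois type --- is where the work lies. Here I would use that matching atomic data over $M$ yields an isomorphism between the submodules carrying $M \cup \bar a_1$ and $M \cup \bar a_2$ fixing $M$, and then amalgamate $N_1$ and $N_2$ along this identification by a pushout; Lemma \ref{push} guarantees that the pushout lies in $K$ and that its structure maps are $\M$-embeddings, so the amalgam witnesses equality of the Galois types. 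For $\M = Pure$ this is carried out in \cite{maz2}; the cases $\M = Emb$ and $\M = RD$ run along identical lines, replacing appeals to purity by Fact \ref{weak-stable} and Lemma \ref{local}.

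Granting the characterization, tameness is a formal consequence of the fact that each syntactic formula mentions only finitely many parameters. Suppose $\gtp(\bar a_1 / M; N_1) \neq \gtp(\bar a_2 / M; N_2)$. By the characterization the syntactic types over $M$ differ, so some single formula $\varphi(\bar x, \bar m)$ with a finite tuple $\bar m \in M$ lies in one and not the other. Let $M_0 \leq_\M M$ be obtained by applying the L\"{o}wenheim--Skolem--Tarski axiom to $\bar m$ in $M$; since $\LS(\km) = \text{card}(R) + \aleph_0$ by Lemma \ref{aec}, we have $\|M_0\| \leq \text{card}(R) + \aleph_0$. Then $\varphi(\bar x, \bar m)$ already separates $\mathrm{stp}_\M(\bar a_1 / M_0)$ from $\mathrm{stp}_\M(\bar a_2 / M_0)$, and applying the characterization over $M_0$ gives $\gtp(\bar a_1 / M_0) \neq \gtp(\bar a_2 / M_0)$. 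This is exactly $(\text{card}(R) + \aleph_0)$-tameness.

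The main obstacle is the hard direction of the characterization for $\M = Emb$ and $\M = RD$: one must verify that the partial isomorphism coming from matching atomic (respectively atomic plus $RD$) data is compatible with amalgamation into $K$ through $\M$-embeddings, since the submodule generated by $M \cup \bar a$ need not itself be an $\M$-submodule of $N$. I expect this to be handled by working with the given $\M$-embeddings $M \leq_\M N_i$ and invoking the closure of $K$ under pushouts of $\M$-embeddings (Lemma \ref{push}) together with the weakly stable independence relation (Fact \ref{weak-stable}), exactly as in the pure case; once this compatibility is established, the remainder is routine bookkeeping.
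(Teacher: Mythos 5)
Your global architecture (a syntactic invariant consisting of finitary formulas, plus the L\"{o}wenheim--Skolem--Tarski reduction to a small $M_0$) is the right shape, the easy direction is fine, and the case $\M = Emb$ genuinely works, since there $A = \langle M \cup \bar a_1\rangle$ is an $\M$-submodule of $N_1$ lying in $K$, so Lemma \ref{push} applies to the pushout along $A \hookrightarrow N_1$. The genuine gap is exactly at the point you flag and then defer: for $\M = RD$ (and for $\M = Pure$ in this generality), your amalgam is a pushout along $A \hookrightarrow N_1$, which is \emph{not} an $\M$-embedding, so Lemma \ref{push} gives neither that the pushout lies in $K$ nor that its legs are $\M$-embeddings --- and membership in $K$ can genuinely fail. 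Concretely, let $K$ be the torsion-free abelian groups with $\M = RD$ (an example of Hypothesis \ref{hyp2} named in Section 2; over $\mathbb{Z}$, $RD$-embeddings coincide with pure embeddings), $M = 0$, $N_1 = N_2 = \mathbb{Z}$, $a_1 = a_2 = 2$. Then $A = 2\mathbb{Z}$, $h : 2\mathbb{Z} \to \mathbb{Z}$ is the inclusion, and the pushout is $(\mathbb{Z} \oplus \mathbb{Z})/\{(2k,-2k) : k \in \mathbb{Z}\} \cong \mathbb{Z} \oplus \mathbb{Z}/2\mathbb{Z}$, which is not torsion-free: the quotient map is not an $RD$-epimorphism (the diagonal $\{(2k,-2k)\}$ is not an $RD$-submodule of $\mathbb{Z}^2$), so closure of $K$ under $\M$-epimorphic images is of no help. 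Here the two Galois types are trivially equal, so this does not refute your characterization, but it does break your construction; and invoking Fact \ref{weak-stable} cannot repair it, because the weakly stable independence relation is a property of commutative squares of $\M$-embeddings, while your square has a non-$\M$ leg. Thus the hard direction of your characterization remains unproven for $RD$, and nothing in your argument establishes its truth under Hypothesis \ref{hyp2} alone.

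It is also worth noting that the paper's own route, namely the argument of \cite[4.21]{maz2} transported to $\M = Emb$ and $\M = RD$, sidesteps any syntactic characterization of Galois types: it runs through the independence machinery that the paper has already set up --- the weakly stable independence relation of Fact \ref{weak-stable} (which carries uniqueness) together with local character (Lemma \ref{local}, which produces bases of size $\text{card}(R) + \aleph_0$) --- so that two types agreeing on all $\leq_\M$-submodels of size $\text{card}(R)+\aleph_0$ must be equal. This is what makes the proof uniform in $\M$: no analysis of which formulas are reflected by $RD$-embeddings is needed, and the problematic pushout over a non-$\M$-submodule never arises. If you wish to salvage the syntactic route, you must first prove the characterization by building the identifying amalgam inside a model obtained via the independence relation rather than by the naive pushout over $\langle M \cup \bar a_1 \rangle$; at that point you will essentially have reconstructed the paper's argument.
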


Throughout the rest of this section we say that $f: A \to B$ is a $\km$-embeddings if $f:A \to B$ is a $\M$-embedding and $A, B \in K$. This is consistent with the model theoretic notation used for abstract elementary classes which was introduced in Section 2.

We turn towards understanding superstability. We begin by showing that $\km$-injective modules in noetherian categories satisfy a Schr\"{o}der-Bernstein type property. 

\begin{lemma}\label{csb} 
Assume $(K, \M)$ satisfies Hypothesis \ref{hyp2} and $(K, \M)$ is noetherian. If $f: A \to B$, $g: B \to A$ are $\M$-embeddings and $A, B $ are $\km$-injective, then $A$ is isomorphic to $B$.
\end{lemma}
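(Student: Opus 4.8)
The plan is to run an Eilenberg-swindle argument, using relative injectivity to convert the two $\M$-embeddings into splittings and using the noetherian hypothesis to legitimize an infinite direct sum. First I would apply the proposition asserting that a $\km$-injective $\M$-submodule of a module in $K$ is a direct summand: since $f\colon A\to B$ is an $\M$-embedding and $A$ is $\km$-injective, $f[A]\cong A$ is a direct summand of $B$, so $B\cong A\oplus B_0$ for some $B_0\in K$; symmetrically $A\cong B\oplus A_0$. Both $A_0$ and $B_0$ are $\km$-injective, being direct summands of $\km$-injectives, and lie in $K$ by Proposition \ref{v-basic}. Set $E:=A_0\oplus B_0$, which is $\km$-injective since finite direct sums of $\km$-injectives are $\km$-injective. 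Substituting the two displays gives $A\cong B\oplus A_0\cong A\oplus B_0\oplus A_0=A\oplus E$.

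The heart of the argument is to upgrade $A\cong A\oplus E$ to $A\cong A\oplus E^{(\omega)}$. Fixing an isomorphism $A\cong A\oplus E$ and iterating it internally, I would produce a descending sequence of summands $A=A^{(0)}\supseteq A^{(1)}\supseteq\cdots$ together with submodules $F_n\cong E$ such that $A=A^{(n)}\oplus F_0\oplus\cdots\oplus F_{n-1}$ and $A^{(n)}\cong A$ for every $n$. The finite partial sums $U_n:=F_0\oplus\cdots\oplus F_{n-1}$ are direct summands of $A$, hence $U_n\leq_\M A$ by Proposition \ref{v-basic}, and they form a $\leq_\M$-increasing chain; since $\km$ is an abstract elementary class (Lemma \ref{aec}), the union $U:=\bigcup_n U_n=\bigoplus_n F_n\cong E^{(\omega)}$ satisfies $U\leq_\M A$. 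Here the noetherian hypothesis is essential: it guarantees that $E^{(\omega)}$, and hence $U$, is $\km$-injective, so by the splitting proposition $U$ is a direct summand, $A=U\oplus V$. Using $U\oplus U\cong E^{(\omega)}\oplus E^{(\omega)}\cong E^{(\omega)}\cong U$ we then obtain $A\oplus E^{(\omega)}\cong A\oplus U=U\oplus U\oplus V\cong U\oplus V=A$, that is, $A\cong A\oplus E^{(\omega)}$.

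Finally I would close the swindle. Since $E^{(\omega)}\cong A_0^{(\omega)}\oplus B_0^{(\omega)}$ absorbs a single copy of $B_0$, we have $E^{(\omega)}\oplus B_0\cong E^{(\omega)}$, whence $B\cong A\oplus B_0\cong(A\oplus E^{(\omega)})\oplus B_0\cong A\oplus E^{(\omega)}\cong A$, as desired. The main obstacle is exactly the middle step: without relative injective envelopes one cannot directly split off the ``limit'' summand, and the fix is to manufacture an internal copy of $E^{(\omega)}$ as an $\M$-submodule via the descending chain and then invoke noetherianity to make it $\km$-injective and therefore a direct summand; everything else is bookkeeping with the absorption identity $E^{(\omega)}\oplus E^{(\omega)}\cong E^{(\omega)}$.
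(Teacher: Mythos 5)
Your proposal is correct, and it rests on exactly the same three pillars as the paper's proof: the splitting of $\km$-injective $\M$-submodules, the chain axioms of the AEC $\km$ (Lemma \ref{aec}) to realize a countable internal direct sum of split summands as an $\M$-submodule, and noetherianity to make that sum $\km$-injective and hence split it off. The packaging, though, is genuinely different. The paper argues Bumby-style: after reducing to $A \leq_\M B$ and writing $B = C \oplus A$, it iterates the second embedding $g$ on the single complement $C$, shows by induction (via Proposition \ref{v-basic}(3)) that $\bigoplus_{k \leq n} g^{k}[C] \leq_\M B$, takes the union $C^* = \bigoplus_{n < \omega} g^{n}[C] \leq_\M B$, splits $A = g[C^*] \oplus D$ using noetherianity, and concludes from $C \oplus g[C^*] = C^*$ that $B = C^* \oplus D \cong g[C^*] \oplus D = A$. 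You instead symmetrize both embeddings into splittings $B \cong A \oplus B_0$ and $A \cong B \oplus A_0$ at the outset, reduce to the absorption statement that $A \cong A \oplus E$ implies $A \cong A \oplus E^{(\omega)}$ with $E = A_0 \oplus B_0$, and close with $E^{(\omega)} \oplus B_0 \cong E^{(\omega)}$; your chain $U_n = F_0 \oplus \cdots \oplus F_{n-1}$ of split summands plays precisely the role of the paper's $\bigoplus_{k \leq n} g^{k}[C]$, with the appeal to Proposition \ref{v-basic}(2) replacing the paper's inductive use of \ref{v-basic}(3), and all the verifications you invoke (each $U_n$ and its union lie in $K$ and are $\leq_\M A$, $U \cong E^{(\omega)}$ is $\km$-injective by noetherianity, hence a summand) do go through. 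What each approach buys: the paper's route is shorter and computes with concrete submodules of $B$, never needing the final absorption identity; yours avoids tracking iterated images of $g$ entirely, isolates a reusable Eilenberg-swindle lemma, and makes maximally transparent that the noetherian hypothesis enters only to split off the limit summand $E^{(\omega)}$ --- which is exactly the obstacle, in the absence of relative injective envelopes, that the paper highlights before Question \ref{q3}.
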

\begin{proof}
We may assume without lost of generality that $A \leq_\M B$. Since $A$ is $\km$-injective, there is $C \in K$ such that $B = C\oplus  A$.  

We show by induction that  $\bigoplus_{k \leq n } g^{k}[C] \leq_\M B$ for every $n < \omega$ where we are taking internal direct sums in $B$. The base step is clear as $g^0[C]=C$ is a direct summand of $B$, so we do the induction step. By induction hypothesis $\bigoplus_{k \leq n } g^{k}[C] \leq_\M B$. Since $g:  B \to A$ is a $\K_\M$-embedding  $g[ \bigoplus_{k \leq n } g^{k}[C]  ] \leq_\M  A$, then $C \oplus g[ \bigoplus_{k \leq n } g^{k}[C]  ]  \leq_\M C \oplus A$ by Proposition \ref{v-basic}. Substituting $B$ by $C \oplus A$ we get that $ \bigoplus_{k \leq n +1 } g^{k}[C] = C \oplus  \bigoplus_{k \leq n } g^{k+1}[C]  \leq_\M C \oplus A = B$. 

Let $C^* = \bigoplus_{n < \omega} g^{n}[C]$. Since $\{ \bigoplus_{k \leq n } g^{k}[C] : n < \omega \}$ is an increasing chain in $\K_\M$ bounded by $B$, we get that $ C^* = \bigcup_{n < \omega} (\bigoplus_{k \leq n } g^{k}[C] ) \leq_\M B$. Since $g: B \to A$ is a $\km$-embedding $g[C^*]= \bigoplus_{k \geq 1 } g^{k}[C] \leq_\M A$. Observe that $C$ is $\km$-injective because it is a direct summand of $B$, then $g[C^*]= \bigoplus_{k \geq 1 } g^{k}[C]$ is $\km$-injective because $(K,\M)$ is noetherian.  So there is $D\in K$ such that $A = g[C^*] \oplus D$. Then \[B = C \oplus A = C \oplus (g[C^*] \oplus D)= (C \oplus g[C^*]) \oplus D.\] As $C \oplus g[C^*] = C^*$ and $g: C^* \cong g[C^*]$ we get that  $(C \oplus g[C^*]) \oplus D  = C^* \oplus D \cong g[C^*] \oplus D$. Finally, as  $g[C^*] \oplus D=A$, we conclude that $B \cong A$.\end{proof}

The previous proof has some similarities with the proof of \cite[2.4]{gks}. \cite{gks} generalizes the classical result for injective modules of Bumby \cite{bumby}.

In most of the examples given in Example \ref{ex1}, the previous result holds even if  $(K, \M)$ is not noetherian, so it is natural to ask:
\begin{question}\label{q3} 
Does the previous result still hold even if $(K, \M)$ is not noetherian? 
\end{question}


Before proving the equivalence between superstable AECs and noetherian categories, we need to  understand  limit models and universal extensions. 

\begin{prop}\label{lim-inj}  Assume $(K, \M)$ satisfies Hypothesis \ref{hyp2}.  Let  $\lambda  \geq (\text{card}(R) + \aleph_0)^+$ be an  infinite cardinal and  $\kappa$ be a regular cardinal such that $\lambda \geq \kappa \geq (\text{card}(R) + \aleph_0)^+$. If $M$ is a $(\lambda, \kappa)$-limit model in $\km$, then $M$ is $\km$-injective. 
\end{prop}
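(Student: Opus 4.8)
The plan is to combine two results already established in the excerpt: the fact that sufficiently long limit models are saturated (Fact \ref{sat-lim}) and the fact that saturated models are relative injective (Corollary \ref{sat-inj}). Since the genuinely hard part—showing that saturated models satisfy the relative injectivity test—was already handled via the Baer-like criterion in Lemma \ref{sat}, the remaining work is essentially bookkeeping of cardinal arithmetic.

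First I would record that $\km$ is an AEC by Lemma \ref{aec} with $\LS(\km) = \text{card}(R) + \aleph_0$, and that it has the amalgamation property, the joint embedding property and no maximal models by Proposition \ref{ap}. This makes Fact \ref{sat-lim} applicable. I then check its hypotheses: from $\lambda \geq (\text{card}(R)+\aleph_0)^+$ we get $\lambda > \text{card}(R)+\aleph_0 = \LS(\km)$, and by assumption $\kappa$ is a regular cardinal with
\[
\kappa \in \bigl[(\text{card}(R)+\aleph_0)^+,\, \lambda\bigr] = \bigl[\LS(\km)^+,\, \lambda\bigr].
\]
Thus Fact \ref{sat-lim}, applied to the $(\lambda,\kappa)$-limit model $M$, yields that $M$ is $\kappa$-saturated.

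To finish, I would invoke Corollary \ref{sat-inj}. Since $\kappa \geq (\text{card}(R)+\aleph_0)^+$ and $M \in K$ is $\kappa$-saturated, that corollary directly gives that $M$ is $\km$-injective, which is the desired conclusion.

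The point worth emphasizing is that there is no real obstacle remaining at this stage: the substantive content—that saturation alone forces relative injectivity despite the absence of a syntactic characterization of $\km$-injective modules—was already absorbed into Lemma \ref{sat} and its Corollary \ref{sat-inj} via the Baer-like criterion (Theorem \ref{s-inj}). The only subtlety to verify is that the two cited results share a common cardinal threshold, namely $(\text{card}(R)+\aleph_0)^+$: Fact \ref{sat-lim} requires the limit length $\kappa$ to be regular and at least $\LS(\km)^+ = (\text{card}(R)+\aleph_0)^+$, while Corollary \ref{sat-inj} requires the saturation level to be at least $(\text{card}(R)+\aleph_0)^+$, and the hypothesis $\kappa \geq (\text{card}(R)+\aleph_0)^+$ ensures both simultaneously.
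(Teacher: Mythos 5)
Your proposal is correct and is essentially identical to the paper's own proof, which likewise deduces $\kappa$-saturation of $M$ from Fact \ref{sat-lim} and then concludes $\km$-injectivity via Corollary \ref{sat-inj}. Your additional verifications (that Lemma \ref{aec} and Proposition \ref{ap} make Fact \ref{sat-lim} applicable, and that $\kappa \geq (\text{card}(R)+\aleph_0)^+ = \LS(\km)^+$ satisfies both cardinal thresholds) are exactly the bookkeeping the paper leaves implicit.
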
 
\begin{proof}
$M$ is a $\kappa$-saturated model by Fact \ref{sat-lim}. Hence $M$ is $\km$-injective by Corollary \ref{sat-inj}.  
\end{proof}

\begin{lemma}\label{uni+} Assume $(K, \M)$ satisfies Hypothesis \ref{hyp2}, $\lambda$ is an infinite cardinal  and $M \in K$ with $\| M \| \leq \lambda$.
If $M$ is $\km$-injective  and $N$ is  universal in $(\km)_\lambda$ , then $M \oplus N$ is $\lambda$-universal over $M$.
\end{lemma}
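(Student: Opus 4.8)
The plan is to verify the definition of $\lambda$-universality over $M$ directly. First I would record the two easy structural facts: $M\oplus N\in K$ by closure under direct sums (Hypothesis \ref{hyp2}), and $M\leq_\M M\oplus N$, since $M$ is a direct summand of $M\oplus N$ and split embeddings lie in $\M$ by Proposition \ref{v-basic}. It then remains to take an arbitrary $M^*\in(\km)_{\leq\lambda}$ with $M\leq_\M M^*$ and produce a $\Kk$-embedding $f:M^*\xrightarrow[M]{}M\oplus N$ fixing $M$ pointwise.

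The key leverage is the $\km$-injectivity of $M$: since the inclusion $M\hookrightarrow M^*$ is a $\M$-embedding, $M^*\in K$, and $M$ is $\km$-injective, the proposition stating that a $\km$-injective $\M$-submodule is a direct summand yields $C\in K$ with $M^*=M\oplus C$ as an internal direct sum, where $\|C\|\leq\|M^*\|\leq\lambda$. Next I would embed $C$ into $N$. As $N$ is universal in $(\km)_\lambda$, it absorbs every model of cardinality exactly $\lambda$; to cover the case $\|C\|\leq\lambda$ I would pad, forming $C\oplus N$, which has cardinality $\lambda$ and hence admits a $\Kk$-embedding into $N$. Precomposing with the split inclusion $C\leq_\M C\oplus N$ then gives a $\Kk$-embedding $j:C\to N$.

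Finally I would set $f=\id_M\oplus j:M\oplus C\to M\oplus N$. This visibly fixes $M$ pointwise and is injective. That $f$ is a $\Kk$-embedding follows because its image is $M\oplus j[C]$, and Proposition \ref{v-basic}(3), applied to $M\leq_\M M$ and $j[C]\leq_\M N$, gives $M\oplus j[C]\leq_\M M\oplus N$. Hence $M^*$ embeds over $M$ into $M\oplus N$, so $M\oplus N$ is $\lambda$-universal over $M$. I expect the only genuinely delicate point to be the passage from universality for models of cardinality exactly $\lambda$ to the embedding of a model of cardinality $\leq\lambda$; the padding argument above resolves this using closure under direct sums, and the remaining verifications are a routine assembly of the two factors.
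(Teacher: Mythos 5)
Your proof is correct and takes essentially the same route as the paper's: use $\km$-injectivity of $M$ to split $M^* = M \oplus C$ with $C \in K$, embed $C$ into $N$ by universality, and assemble $\id_M \oplus j$ into a $\km$-embedding via Proposition \ref{v-basic}(3). The only difference is that you explicitly handle the case $\| C \| < \lambda$ by padding to $C \oplus N$ before invoking universality in $(\km)_\lambda$ --- a step the paper's proof passes over silently --- so this is added care rather than a divergence.
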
 
\begin{proof}
Let $L \in (\K_\M)_{\leq \lambda}$ such that $M \leq_\M L$. Then there is $L^* \in K$ such that $ L= M \oplus L^*$ as $M$ is $\km$-injective. Since $N$ is universal in $(\K_\M)_\lambda$ there is $g: L^* \to N$ a $\K_\M$-embedding. Then $f:  L= M \oplus L^* \to M \oplus N$ given by $f(m + l)= m + g(l)$ is a $\km$-embedding by Proposition \ref{v-basic}.  \end{proof}



The next result is the main technical result of this section.

\begin{lemma}\label{sum-lim}   Assume $(K, \M)$ satisfies Hypothesis \ref{hyp2}.  Let  $\lambda  \geq (\text{card}(R) + \aleph_0)^+$  such that $\K_\M$ is $\lambda$-stable.
If $M$ is the $(\lambda, (\text{card}(R) + \aleph_0)^+)$-limit model and $(K, \M)$ is noetherian or $\K_\M$ has uniqueness of limit models of cardinality $\lambda$, then $M^{(\alpha)}$ is the $(\lambda, \alpha)$-limit model and $M^{(\alpha)}$ is $\km$-injective for every limit ordinal $\alpha < \lambda^+$.
\end{lemma}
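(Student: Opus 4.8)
The plan is to exhibit $M^{(\alpha)}$ as the union of an increasing continuous chain whose successive terms are universal extensions, reading off injectivity along the way. Write $\kappa_0 = (\text{card}(R)+\aleph_0)^+$. First I would record two facts about the base model: since $\kappa_0$ is a regular cardinal with $\kappa_0 \geq \kappa_0$, Proposition~\ref{lim-inj} gives that $M$ is $\km$-injective, and since $M$ is in particular a $\lambda$-limit model, Remark~\ref{uni-lim} gives that $M$ is universal in $(\km)_\lambda$. For a limit ordinal $\alpha < \lambda^+$ I set $M_i := M^{(1+i)}$ for $i < \alpha$, viewing $M_i$ as the internal submodule of $M^{(\alpha)}$ on the first $1+i$ coordinates. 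This chain is increasing and continuous: at limit stages direct sums commute with directed unions, and each inclusion $M_i \leq_\M M_{i'}$ holds because $M_i$ is a direct summand of $M_{i'}$, a $\M$-embedding by Proposition~\ref{v-basic}. Moreover every $M_i$ lies in $\K_\lambda$, and $\bigcup_{i<\alpha} M_i = M^{(\alpha)}$ since the coordinates $1+i$ are cofinal in the limit ordinal $\alpha$.

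The engine of the construction is that each successor term is universal over its predecessor. Since $M_{i+1} = M_i \oplus M$ and $M$ is universal in $(\km)_\lambda$, Lemma~\ref{uni+} yields that $M_{i+1}$ is $\lambda$-universal over $M_i$ \emph{provided $M_i$ is $\km$-injective}. Thus everything reduces to the key sub-claim that each partial sum $M_i = M^{(1+i)}$ is $\km$-injective, and this is exactly where the two alternative hypotheses enter.

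If $(K,\M)$ is noetherian, the sub-claim is immediate: $M_i$ is a direct sum of copies of the $\km$-injective module $M$, hence $\km$-injective by definition of noetherian. If instead one only assumes uniqueness of limit models of cardinality $\lambda$, I would prove the sub-claim by a nested transfinite induction on $\beta$ with $1 \leq \beta < \lambda^+$, showing $M^{(\beta)}$ is $\km$-injective: for finite $\beta$ this is closure of $\km$-injectives under finite direct sums (the proposition preceding the definition of $\Sigma$-$\km$-injectivity); for $\beta = \gamma+1$ with $\gamma$ infinite it follows from $M^{(\beta)} = M^{(\gamma)} \oplus M$, again a finite direct sum of $\km$-injectives using the inductive hypothesis on $\gamma$; and for limit $\beta$ the same chain construction (now legitimate below $\beta$, since the partial sums $M^{(1+i)}$ for $i<\beta$ are injective by induction) shows $M^{(\beta)}$ is a $(\lambda,\beta)$-limit model, whence $M^{(\beta)} \cong M$ by uniqueness and so $M^{(\beta)}$ is $\km$-injective. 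The crucial subtlety is that Proposition~\ref{lim-inj} only delivers injectivity of limit models of length $\geq \kappa_0$, so for short limit lengths such as $\omega$ one genuinely needs uniqueness of limit models to transport injectivity from $M$.

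With the sub-claim in hand, Lemma~\ref{uni+} makes the chain $\{M_i : i<\alpha\}$ witness that $M^{(\alpha)}$ is the $(\lambda,\alpha)$-limit model, uniqueness of limit models of a fixed length being the standard consequence of amalgamation (Proposition~\ref{ap}). Finally $M^{(\alpha)}$ is $\km$-injective: in the noetherian case it is a direct sum of $\km$-injectives, while in the uniqueness case it is a $\lambda$-limit model, hence isomorphic to $M$ and therefore $\km$-injective. I expect the main obstacle to be the uniqueness case of the sub-claim, namely arranging the nested induction so that the injectivity needed to run the limit-model chain at stage $\beta$ is already secured at all smaller stages, and in particular handling short limit lengths by uniqueness rather than by Proposition~\ref{lim-inj}.
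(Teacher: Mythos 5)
Your proposal is correct and follows essentially the same route as the paper's proof: an induction on the number of summands, with the base case from Proposition~\ref{lim-inj} and Remark~\ref{uni-lim}, successor steps from closure of $\km$-injectives under finite direct sums together with Lemma~\ref{uni+}, and limit stages handled either by noetherianity or by recognizing the partial sum as a $(\lambda,\gamma)$-limit model and invoking uniqueness to get $M^{(\gamma)} \cong M$. Your ``nested'' induction is just the paper's single simultaneous induction on injectivity and universality reorganized, and your observation that Proposition~\ref{lim-inj} cannot handle short limit lengths such as $\omega$ is precisely why the limit case splits into those two alternatives.
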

\begin{proof}
Let  $\alpha < \lambda^+$ be a limit ordinal and consider $\{M^{(\gamma)} : 0 < \gamma \leq  \alpha \}$, we show by induction on $0< \gamma \leq  \alpha$ that:

\begin{enumerate}
\item $M^{(\gamma)}$ is $\km$-injective.
\item $M^{(\gamma +1)}$ is universal over $M^{(\gamma)}$.
\end{enumerate}
Before we do the proof, observe that this is enough as $\{M^{(\gamma)} : 0 < \gamma <  \alpha \}$ witnesses that $M^{(\alpha)}$ is the $(\lambda, \alpha)$-limit model and by taking $\gamma = \alpha$ we get that $M^{(\alpha)}$ is $\km$-injective. We do the proof by induction.

\underline{Base:} $M$ is $\km$-injective by Lemma \ref{lim-inj}. Moreover, $M \oplus M$ is universal over $M$ by Lemma \ref{uni+}.

\underline{Induction step:} If $\gamma = \beta +1$, then $M^{(\beta +1)}$ is $\km$-injective, since  $M^{(\beta)}, M$ are $\km$-injective and $\km$-injectives are closed under finite direct sums. Moreover,  (2) follows from Lemma \ref{uni+}.

If $\gamma$ is a limit ordinal, then we divide the argument into two cases depending on whether $(K, \M)$ is noetherian or $\km$ has uniqueness of limit models:

\begin{itemize}
\item \underline{Case 1:} Assume $(K, \M)$ is  noetherian. Then $M^{(\gamma)}$ is $\km$-injective because $M$ is $\km$-injective. Moreover, $M^{(\gamma)} \oplus M$ is universal over $M^{(\gamma)}$ by Lemma \ref{uni+}.
\item \underline{Case 2:} Assume $\K_\M$ has uniqueness of limit models of cardinality $\lambda$. Then consider $\{ M^{(\beta)} : 0 < \beta < \gamma \}$. It is clear that it is an increasing and continuous chain in $(\K_\M)_\lambda$ such that $\bigcup_{\beta < \gamma }M^{(\beta)} = M^{(\gamma)}$. Moreover, by induction hypothesis $M^{(\beta + 1)}$ is universal over $M^{(\beta)}$ for $\beta < \gamma$. Therefore, $\{ M^{(\beta)} : 0<\beta < \gamma \}$ witnesses that $M^{(\gamma)}$ is a $(\lambda, \gamma)$-limit model.  Then by uniqueness of limit models of size $\lambda$, $M^{(\gamma)}$ is isomorphic to $M$. We know that $M$ is $\km$-injective, hence $M^{(\gamma)}$ is a $\km$-injective. The argument that $M^{(\gamma+ 1)}$ is universal over $M^{(\gamma)}$ is the same as that of Case 1. 
\end{itemize} \end{proof}
 
 A similar argument to the one above gives us the following result.

\begin{cor}\label{omega} Assume $(K, \M)$ satisfies Hypothesis \ref{hyp2}. Let $\lambda \geq \text{card}(R) + \aleph_0$. If $M$ is $\km$-injective, $\| M \| =\lambda$ and $M$ is universal in $(\K_\M)_\lambda$, then $M^{(\omega)}$ is the $(\lambda, \omega)$-limit model.
\end{cor}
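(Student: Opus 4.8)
The plan is to exhibit an increasing continuous chain witnessing that $M^{(\omega)}$ is a $(\lambda,\omega)$-limit model, mirroring the base and successor steps of Lemma \ref{sum-lim} but with $M$ in place of the $(\lambda,(\text{card}(R)+\aleph_0)^+)$-limit model. Concretely, I would set $M_i := M^{(i+1)}$ for $i<\omega$, so that $M_0 = M$ and $\bigcup_{i<\omega} M_i = M^{(\omega)}$. Each $M_i$ lies in $(\K_\M)_\lambda$, since $\|M\| = \lambda \geq \aleph_0$ forces $\|M^{(n)}\| = \lambda$ for every finite $n \geq 1$, and likewise $\|M^{(\omega)}\| = \lambda$.

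First I would verify that the chain is $\leq_\M$-increasing and continuous. The inclusion $M^{(i+1)} \leq_\M M^{(i+2)}$ holds because $M^{(i+1)}$ is a direct summand of $M^{(i+2)}$ and split embeddings belong to $\M$ by Proposition \ref{v-basic}. Continuity is vacuous here, as $\omega$ has no limit ordinals below it, so there is no internal limit stage of the chain to check; in particular the noetherian-versus-uniqueness case split that complicated Lemma \ref{sum-lim} never arises.

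The heart of the argument is the successor clause: $M_{i+1}$ must be universal over $M_i$. For this I would invoke Lemma \ref{uni+} with the $\km$-injective module taken to be $M_i = M^{(i+1)}$ and the universal model taken to be $M$ itself. This requires two inputs: that $M^{(i+1)}$ is $\km$-injective, which follows by induction on $i$ from the hypothesis that $M$ is $\km$-injective together with the closure of $\km$-injectives under finite direct sums; and that $M$ is universal in $(\K_\M)_\lambda$, which is assumed. Lemma \ref{uni+} then yields that $M^{(i+1)} \oplus M = M^{(i+2)} = M_{i+1}$ is $\lambda$-universal over $M_i$, and since every model in the chain has cardinality $\lambda$, this is exactly universality over $M_i$ in the sense of Definition \ref{limit}.

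With all clauses verified, $\{M_i : i < \omega\}$ witnesses that $M^{(\omega)}$ is a $(\lambda,\omega)$-limit model; the definite article is justified by the standard uniqueness of limit models of the same length in an AEC with amalgamation, joint embedding and no maximal models (Proposition \ref{ap}). I do not anticipate a genuine obstacle here: the proof is a streamlined special case of Lemma \ref{sum-lim}, and the only points demanding care are matching the two roles in Lemma \ref{uni+} correctly and noting that the $\omega$-indexed chain carries no internal limit stage.
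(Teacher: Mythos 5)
Your proof is correct and is essentially the paper's own argument: the paper proves Corollary \ref{omega} by exactly this streamlined rerun of the base and successor steps of Lemma \ref{sum-lim}, using closure of $\km$-injectives under finite direct sums together with Lemma \ref{uni+} applied to $M^{(i+1)}$ and the universal model $M$, with the limit-stage case split disappearing since $\omega$ has no internal limit stages. Your added remarks --- that Proposition \ref{v-basic} puts the split inclusions $M^{(i+1)} \leq_\M M^{(i+2)}$ in $\M$, and that uniqueness of $(\lambda,\omega)$-limit models under amalgamation, joint embedding and no maximal models justifies the definite article --- are exactly the right bookkeeping.
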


We prove the main result of this section.

\begin{theorem}\label{equiv}  Assume $(K, \M)$ satisfies Hypothesis \ref{hyp2}. The following are equivalent.
\begin{enumerate}
\item $(K, \M)$ is noetherian.
\item $\K_\M$ is superstable.
\end{enumerate}
\end{theorem}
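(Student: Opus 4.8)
The plan is to prove the two implications separately, in each case reducing the algebraic condition to the behaviour of limit models at a well-chosen cardinal and then transporting it through the $\km$-injective modules by means of the Schr\"{o}der--Bernstein property (Lemma \ref{csb}) and the $\Sigma$-injectivity machinery of Section 4. Throughout I write $\kappa=\text{card}(R)+\aleph_0$.

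For $(1)\Rightarrow(2)$ I would first secure stability: by Theorem \ref{statf} the class $\km$ is $\lambda$-stable whenever $\lambda^{\kappa}=\lambda$, and such $\lambda$ occur cofinally, so fix one large $\lambda_0\geq \kappa^+$. Let $M$ be the $(\lambda_0,\kappa^+)$-limit model. The noetherian branch of Lemma \ref{sum-lim} then gives that $M^{(\alpha)}$ is \emph{the} $(\lambda_0,\alpha)$-limit model and is $\km$-injective for every limit $\alpha<\lambda_0^+$; in particular every $(\lambda_0,\alpha)$-limit model is $\km$-injective, including the low-cofinality ones (such as $\alpha=\omega$) that Proposition \ref{lim-inj} does not reach. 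Since limit models of cardinality $\lambda_0$ are universal in $(\km)_{\lambda_0}$ (Remark \ref{uni-lim}), any two of them embed into one another; being $\km$-injective and $(K,\M)$ being noetherian, Lemma \ref{csb} forces them to be isomorphic. Hence $\km$ has a unique $\lambda_0$-limit model. Because $\km$ is tame (Lemma \ref{tame}) and has amalgamation, joint embedding and no maximal models (Proposition \ref{ap}), the equivalence of the various notions of superstability \cite{grva,vaseyt} upgrades this single-cardinal witness to uniqueness of limit models in a tail of cardinals, i.e.\ to superstability.

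For $(2)\Rightarrow(1)$ I would use Proposition \ref{add-lim} to reduce noetherianity to showing that an arbitrary direct sum $\bigoplus_{i\in I}E_i$ of $\km$-injectives is $\km$-injective. Given such a family, choose $\lambda$ in the superstability tail with $\lambda\geq\kappa^+$, $\lambda\geq |I|$ and $\lambda\geq\|E_i\|$ for all $i$; the existence of a (unique) $\lambda$-limit model makes $\km$ $\lambda$-stable, and for $M$ the $(\lambda,\kappa^+)$-limit model, $M$ is $\km$-injective (Proposition \ref{lim-inj}) and universal in $(\km)_\lambda$. As superstability provides uniqueness of limit models of cardinality $\lambda$, the second branch of Lemma \ref{sum-lim} applies and yields that $M^{(\beta)}$ is $\km$-injective for every limit $\beta<\lambda^+$; taking $\beta=\kappa$ shows $M^{(\kappa)}$ is $\km$-injective, so by the characterization of $\Sigma$-$\km$-injectivity in Section 4 the module $M$ is $\Sigma$-$\km$-injective, whence $M^{(I)}$ is $\km$-injective. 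Finally, each $E_i$ has size $\leq\lambda$ and is $\km$-injective, so by universality of $M$ it embeds into $M$ as a direct summand; therefore $\bigoplus_{i\in I}E_i$ is a direct summand of $M^{(I)}$ and is itself $\km$-injective by closure of $\km$-injectives under direct summands (Proposition \ref{v-basic} and Section 4). Thus $(K,\M)$ is noetherian.

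The main obstacle I anticipate is the passage between one good cardinal and a genuine tail. In $(1)\Rightarrow(2)$ Theorem \ref{statf} only delivers stability cofinally rather than on a tail, so the final step rests essentially on the external tameness-based transfer results of \cite{grva,vaseyt}, and the real work is checking that their standing hypotheses (tameness, amalgamation, joint embedding, no maximal models) are all in force. In $(2)\Rightarrow(1)$ the delicate point is that one cannot extend a map into $\bigoplus_{i\in I}E_i$ coordinatewise without destroying finite support; the whole force of the argument is therefore concentrated in choosing $\lambda$ large enough to dominate the given family and in converting the single-cardinal $\Sigma$-injectivity of the universal model $M$ into injectivity of the direct sum through the direct-summand argument.
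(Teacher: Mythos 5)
Your direction (2)$\Rightarrow$(1) is correct and is essentially the paper's argument, with a pleasant variant: the paper applies Lemma \ref{sum-lim} with the exponent equal to the size of the given family, whereas you only extract $\km$-injectivity of $M^{(\text{card}(R)+\aleph_0)}$ and then invoke the $\Sigma$-$\km$-injectivity criterion of Section 4 to cover arbitrary index sets; both are sound, and your route even makes the requirement $\lambda\geq|I|$ superfluous. (The appeal to Proposition \ref{add-lim} is not needed there: noetherianity \emph{is} by definition the statement about direct sums.)

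The genuine gap is in (1)$\Rightarrow$(2), exactly at the step you flag. Under the paper's definition, superstability demands a unique $\lambda$-limit model for \emph{every} $\lambda$ in a tail, hence in particular $\lambda$-stability (existence of a limit model) at every such $\lambda$. Theorem \ref{statf} yields stability only at cardinals with $\lambda^{\text{card}(R)+\aleph_0}=\lambda$, which excludes, for instance, every $\lambda$ of cofinality at most $\text{card}(R)+\aleph_0$. Your plan establishes uniqueness at one well-chosen $\lambda_0$ and then appeals to \cite[1.3]{grva} and \cite{vaseyt} to spread this to a tail; but those results equate \emph{tail} formulations of superstability with one another (no long splitting chains for all high enough $\mu$, uniqueness of limit models for all high enough $\mu$, unions of saturated models, etc.) -- none of them upgrades uniqueness of limit models at a single cardinal to a tail, and in particular none of them manufactures limit models at the cardinals where Theorem \ref{statf} is silent. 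This is precisely where the paper must use noetherianity a second time: its Claim 1 proves $\lambda$-stability at every $\lambda\geq\lambda_0$ by induction, setting $\theta=\cof(\lambda)$, taking $N=\bigoplus_{i<\theta}N_i$ where $N_i$ is the $(\mu_i,(\text{card}(R)+\aleph_0)^+)$-limit model for a suitable increasing sequence of $\mu_i<\lambda$, observing that $N$ is $\km$-injective \emph{because} $(K,\M)$ is noetherian, showing $N$ is universal in $(\km)_\lambda$ via Lemma \ref{uni+} and a \cite[3.18]{kuma}-style argument, and then obtaining the $(\lambda,\omega)$-limit model $N^{(\omega)}$ from Corollary \ref{omega}. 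Once stability is secured on the whole tail, your uniqueness mechanism (Lemma \ref{sum-lim} together with Lemma \ref{csb}), which you ran only at $\lambda_0$, applies verbatim at each $\lambda\geq\lambda_0$ -- that is the paper's Claim 2. Replacing the external transfer by this induction closes the gap; as written, the proposal does not prove (2).
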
 
\begin{proof}
(1) $\Rightarrow$ (2):  Let $\lambda_0$ be the smallest $\lambda \geq  (\text{card}(R) + \aleph_0)^+$ such that $\K_\M$ is $\lambda$-stable. This cardinal exists as $\K_\M$ is stable by Theorem \ref{statf}.

We show that $\K_\M$ has uniqueness of $\lambda$-limit models for every $\lambda \geq \lambda_0$. The proof is divided into two claims. In the first claim we show existence of limit models and in the second claim we show uniqueness.  
 
\underline{Claim 1}:  $\K_\M$ is $\lambda$-stable for every $\lambda \geq \lambda_0$.

\underline{Proof of Claim}: The proof is done by induction. The base step is clear so we prove the induction step. Let $\lambda$ be an infinite cardinal such that $\K_\M$ is $\kappa$-stable for every $\kappa \in [\lambda_0, \lambda)$. Let $cf(\lambda)=\theta$ and $\{ \mu_i < \lambda : i < \theta\}$ be an increasing and continuous chain of cardinals such that $sup_{i < \theta} \mu_i = \lambda^{-}$ and $\mu_i \geq |i| + \lambda_0$ for every $i < \theta$.\footnote{Given a cardinal $\theta$, $\theta^{-} = \eta$ if $\theta = \eta^+$ and $\theta^{-}=\theta$ if $\theta$ is a limit cardinal.} 

Let $N_i$ be the $(\mu_i , (\text{card}(R) + \aleph_0)^+)$-limit model for every $i < \theta$. $N_i$ exists because $\K_\M$ is $\mu_i$-stable by the induction hypothesis.  Let $N = \bigoplus_{i < \theta} N_i$. Since $(K, \M)$ is noetherian and each $N_i$ is $\km$-injective by Proposition \ref{lim-inj}, it follows that $N$ is $\km$-injective.

It is clear that $\| N\| = \lambda$. Moreover, using Lemma \ref{uni+} and doing a similar argument to that of \cite[3.18]{kuma}  one can show that $N$ is universal in $(\K_\M)_\lambda$. Then $N^{(\omega)}$ is a $(\lambda, \omega)$-limit model by Corollary \ref{omega}. Therefore, $\K_\M$ is $\lambda$-stable. $\dagger_{\text{Claim 1}}$
 
 \underline{Claim 2}: There is at most one $\lambda$-limit model for every $\lambda \geq \lambda_0$. 
 
 \underline{Proof of Claim}:  It follows from Lemma \ref{sum-lim} that every $\lambda$-limit model is $\km$-injective. This is enough since $\lambda$-limit models are universal in $(\K_\M)_\lambda$ and by Lemma \ref{csb}.$\dagger_{\text{Claim 2}}$

(2) $\Rightarrow$ (1):  Let $\{ M_\alpha : \alpha < \kappa \}$ be such that $M_\alpha$ is $\km$-injective for every $\alpha < \kappa$.

Let $\lambda > \text{sup}_{\alpha < \kappa} \| M_\alpha \| + (\text{card}(R) + \aleph_0)^+$ be such that $\K_\M$ has uniqueness of limit models of cardinality $\lambda$ and let $N$ be the $(\lambda , (\text{card}(R) + \aleph_0)^+)$-limit model. Since $N$ satisfies the hypothesis of Lemma \ref{sum-lim}, $N^{(\kappa)}$ is the $(\lambda, \kappa)$-limit model and is $\km$-injective.
 
 Since $N$ is universal in $(\K_\M)_\lambda$ by Remark \ref{uni-lim} and each $M_{\alpha}$ is $\km$-injective with $\|M_{\alpha}\| \leq \lambda$, for every $\alpha < \kappa$ there is a $N_\alpha$ such that $N \cong M_\alpha \oplus N_\alpha$. Therefore, we get that:
 
 \[ N^{(\kappa)} \cong \bigoplus_{\alpha < \kappa} (M_\alpha \oplus N_\alpha) \cong  (\bigoplus_{\alpha < \kappa} M_\alpha) \oplus (\bigoplus_{\alpha < \kappa}  N_\alpha)\]

Since $N^{(\kappa)}$ is $\km$-injective and $\km$-injective modules are closed under direct summands, it follows that $\bigoplus_{\alpha < \kappa} M_\alpha$ is $\km$-injective. Therefore, $(K, \M)$ is noetherian. \end{proof}

\begin{remark} Similar results have been obtained for certain classes of modules with embeddings \cite[\S 3]{maz1} and with pure embeddings \cite[\S 4]{maz1}, \cite[\S 3]{m3} \cite[\S 3]{maz2}, \cite[\S 4]{m-tor}, but the result is new in this generality even when $\M$ is the class of pure embeddings and for the specific case of $(_{R}Mod, RD)$.
\end{remark}

\subsection{Characterizing rings via superstability} We characterize several classes of ring via superstability using Theorem \ref{equiv}. Most of the results had already been obtained in  \cite{maz1} and \cite{m3}, but Corollary \ref{rd-ring} is new.

We obtain a characterization of noetherian rings via superstability. Finer results in this direction were obtained in \cite[3.12]{maz1}.  Recall that a ring $R$ is left noetherian if direct sums of injective modules are injective. 

\begin{cor} $(_{R}Mod, \leq_{Emb})$ is superstable if and only if $R$ is left noetherian. 
\end{cor}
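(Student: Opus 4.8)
The plan is to deduce this as an immediate corollary of Theorem \ref{equiv} combined with the classical Bass--Papp theorem. First I would verify that the pair $(_{R}Mod, Emb)$ falls within our framework: by Example \ref{ex1}.(1), the class of all $R$-modules with embeddings satisfies Hypothesis \ref{hyp2}, since $_{R}Mod$ is trivially closed under direct sums, submodules, and epimorphic images. This licenses the use of all the machinery developed in the paper for this particular pair.

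Next I would invoke Theorem \ref{equiv} directly with $(K, \M) = (_{R}Mod, Emb)$. This yields the equivalence that $(_{R}Mod, Emb)$ is noetherian (in the sense of Definition immediately preceding Proposition \ref{add-lim}, i.e.\ every direct sum of $\km$-injectives is $\km$-injective) if and only if the abstract elementary class $(_{R}Mod, \leq_{Emb})$ is superstable. Here one uses that, by Example \ref{ex1} item (1) of the relative-injectives list, the $\km$-injective modules for $(_{R}Mod, Emb)$ are exactly the injective modules, so being noetherian in our sense means precisely that arbitrary direct sums of injective modules are injective.

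Finally I would bring in the Bass--Papp characterization (see \cite[3.46]{lam2}), which states that direct sums of injective $R$-modules are injective if and only if $R$ is left noetherian; equivalently, $(_{R}Mod, Emb)$ is noetherian if and only if $R$ is left noetherian. Chaining the two equivalences gives that $(_{R}Mod, \leq_{Emb})$ is superstable if and only if $R$ is left noetherian, as desired.

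There is no genuine obstacle here: the entire content is packaged inside Theorem \ref{equiv}, and the only verification required is the routine observation that $(_{R}Mod, Emb)$ satisfies Hypothesis \ref{hyp2} and that its relative injectives coincide with the ordinary injective modules, so that our abstract notion of noetherianity specializes to the Bass--Papp condition. The proof is therefore a two-line composition of an already-proven theorem with a well-known classical result.
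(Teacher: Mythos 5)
Your proposal is correct and is essentially the paper's own argument: the corollary is obtained by specializing Theorem \ref{equiv} to $(_{R}Mod, Emb)$, which satisfies Hypothesis \ref{hyp2} by Example \ref{ex1}.(1) and whose $\km$-injectives are exactly the injective modules, and then invoking the Bass--Papp theorem. Indeed, the paper leaves the proof implicit and even phrases left noetherianity directly as the Bass--Papp condition (``direct sums of injective modules are injective''), so your chaining of the two equivalences is precisely what is intended.
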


We obtain a characterization of finite products of finite rings and artinian valuation rings. 

\begin{cor}\label{rd-ring} Assume $R$ is a commutative ring. $(_{R}Mod, \leq_{RD})$ is superstable if and only if $R$ is a  finite product of finite rings and artinian valuation rings.
\end{cor}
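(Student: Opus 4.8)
By Theorem~\ref{equiv}, the pair $(_{R}Mod, \leq_{RD})$ is superstable if and only if $(_{R}Mod, RD)$ is noetherian, i.e.\ if and only if arbitrary direct sums of $RD$-injective modules are $RD$-injective. So the plan is to reduce the whole statement to a purely algebraic characterization: \emph{for a commutative ring $R$, every direct sum of $RD$-injective modules is $RD$-injective if and only if $R$ is a finite product of finite rings and artinian valuation rings.} I would first verify that Hypothesis~\ref{hyp2} holds for $(_{R}Mod, RD)$ (this is the case $\M = RD$ with $K = {}_RMod$, which is Example~\ref{ex1}.(1)), so that Theorem~\ref{equiv} applies, and then spend the rest of the argument on the ring-theoretic equivalence.

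The natural first step is to identify what ``every direct sum of $RD$-injective modules is $RD$-injective'' means structurally. For commutative rings the $RD$-injective modules coincide with the pure-injective modules precisely over the rings where $RD$-exactness and pure-exactness agree, but in general I would work with the known description of the condition that $\bigoplus$ of $RD$-injectives is $RD$-injective as a chain condition on the lattice of ``$RD$-definable'' (i.e.\ cyclically defined, via the formulas $\exists w\,(rw=x)$) subgroups. Since the relevant $RD$-formulas are generated by divisibility conditions $r \mid x$, the finiteness of direct sums should translate into an ascending chain condition on annihilators and on principal-ideal divisibility, which over a commutative ring is exactly the kind of condition that forces the ring to be a finite product of indecomposable pieces of a very restricted form. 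I expect the forward direction to factor through showing $R$ is both noetherian (so that it decomposes as a finite product of local/indecomposable rings) and has each local factor of the required type.

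The main obstacle will be the algebraic classification itself, which I anticipate must be assembled from the existing literature on $RD$-rings and on rings over which $RD$-injectivity is well behaved. Concretely, the hard part is proving that the chain condition coming from superstability forces each indecomposable factor of $R$ to be either a finite ring or an artinian valuation ring, and conversely that over such factors direct sums of $RD$-injectives stay $RD$-injective. For the converse (the ``if'' direction), I would handle the two cases separately: over a finite ring everything is finite-length and the conclusion should be routine from the descending/ascending chain conditions, while over an artinian valuation ring the ideals are linearly ordered and $RD$-injectivity can be tested against the finitely many divisibility formulas, making the closure under direct sums a finite check. I would then invoke that $R$ commutative is a finite product of its local factors to glue the cases, using that both $RD$-injectivity and the noetherian property of $(_{R}Mod, RD)$ are preserved under finite products of rings.

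In summary, the proof reduces via Theorem~\ref{equiv} to an algebraic statement, and I would cite or adapt the corresponding module-theoretic characterization of when $RD$-injectives are closed under direct sums (the $RD$-analogue of the Bass--Papp theorem), the genuinely new content being the identification of the class of commutative rings satisfying it as exactly finite products of finite rings and artinian valuation rings. The delicate step, and the one I would write out in full, is the classification of the indecomposable local factors; the model-theoretic half of the argument is entirely absorbed into the already-established equivalence between noetherianity of $(K,\M)$ and superstability of $\km$.
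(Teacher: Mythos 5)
Your proposal takes essentially the same route as the paper: the paper's entire proof consists of combining Theorem~\ref{equiv} with exactly the algebraic characterization you isolate, namely Couchot's result \cite[2.1]{cou} that for a commutative ring $R$ the $RD$-injective modules are closed under arbitrary direct sums if and only if $R$ is a finite product of finite rings and artinian valuation rings. The classification of the local factors that you flag as the delicate step to be written out in full is thus already available in the literature and is simply cited, so no new algebraic argument is required.
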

\begin{proof}
Follows from \cite[2.1]{cou}. 
\end{proof}

We obtain a characterization of pure-semisimple rings via superstability. Finer results in this direction were obtained in \cite[4.28]{maz1}. Recall that a ring $R$ is left pure-semisimple if every module is pure injective. 
\begin{cor}\label{pss} $(_{R}Mod, \leq_{Pure})$ is superstable if and only if $R$ is left pure-semisimple.
\end{cor}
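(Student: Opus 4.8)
The plan is to reduce to Theorem \ref{equiv} and then to a classical algebraic characterization. First I would note that $(_{R}Mod, Pure)$ satisfies Hypothesis \ref{hyp2} (it is the first item of Example \ref{ex1}), so Theorem \ref{equiv} applies directly and yields that $(_{R}Mod, \leq_{Pure})$ is superstable if and only if $(_{R}Mod, Pure)$ is noetherian. This formal step carries all the model-theoretic weight of the section.

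Next I would rewrite ``noetherian'' concretely. When $\M = Pure$ the $\km$-injective modules are exactly the pure injective modules (this is recorded among the examples of relative injective modules in Section 4). Hence, by the definition of noetherian, $(_{R}Mod, Pure)$ is noetherian if and only if every direct sum of pure injective $R$-modules is pure injective. So it suffices to establish that this last property is equivalent to $R$ being left pure-semisimple.

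For this final equivalence, the direction from pure-semisimplicity to closure under direct sums is immediate: if every module is pure injective, then any direct sum of pure injectives, being a module, is pure injective. For the converse I would pass through $\Sigma$-pure-injectivity. Assuming every direct sum of pure injectives is pure injective, each pure injective $E$ has $E^{(\mathrm{card}(R)+\aleph_0)}$ pure injective, so $E$ is $\Sigma$-$\km$-injective by the lemma characterizing $\Sigma$-$\km$-injectivity; that is, every pure injective left $R$-module is $\Sigma$-pure-injective, which by the classical theory of pure-semisimple rings forces $R$ to be left pure-semisimple.

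I expect the main obstacle to be precisely this last implication, which is the genuine module-theoretic content and is the pure analogue of the hard direction of the Bass-Papp theorem motivating our notion of a noetherian category. Rather than reprove it I would invoke the known characterization (as obtained in \cite[4.28]{maz1}); everything else in the argument is formal, being a direct application of Theorem \ref{equiv} together with the identification of the $\km$-injectives of $(_{R}Mod, Pure)$ with the pure injective modules.
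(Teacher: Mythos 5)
Your proposal is correct and follows essentially the same route as the paper: reduce via Theorem \ref{equiv} to the statement that every direct sum of pure injectives is pure injective, note that the backward direction is immediate, and get the forward direction through $\Sigma$-pure-injectivity. The only difference is bookkeeping: where you outsource the final implication to \cite[4.28]{maz1}, the paper derives it in one line from the fact that $\Sigma$-pure-injective modules are closed under pure submodules \cite[2.11]{prest} (applied to a pure embedding of an arbitrary module into a pure injective one), a step your argument implicitly relies on as well.
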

\begin{proof}
The backward direction is clear and the forward direction follows from the fact that $\Sigma$-pure injective modules are closed under pure submodules (see for example \cite[2.11]{prest}). 
\end{proof}

We obtain a characterization of perfect rings via superstability. Finer results in this direction were obtained in \cite[3.15]{m3}. Recall that a ring $R$ is left perfect if every flat module is a cotorsion module. 

\begin{cor}\label{per} $(_{R}Flat, \leq_{Pure})$  is superstable if and only if $R$ is left perfect. \end{cor}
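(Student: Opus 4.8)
The plan is to deduce this from the main equivalence of the section, Theorem \ref{equiv}, by translating both sides into a single condition on flat cotorsion modules. First I would record the two facts that make Theorem \ref{equiv} applicable: by Example \ref{ex1}.(3) the pair $(_{R}Flat, Pure)$ is an $F$-class with pure embeddings and so satisfies Hypothesis \ref{hyp2}; and the $\km$-injective modules of $(_{R}Flat, Pure)$ are exactly the flat cotorsion modules (as recorded among the examples of relative injectives in Section 4). Consequently Theorem \ref{equiv} yields that $(_{R}Flat, \leq_{Pure})$ is superstable if and only if $(_{R}Flat, Pure)$ is noetherian, which by definition means that every direct sum of flat cotorsion modules is again flat cotorsion. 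The whole proof then reduces to the purely ring-theoretic equivalence: \emph{direct sums of flat cotorsion modules are flat cotorsion if and only if $R$ is left perfect}.

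The implication from left perfectness is immediate and I would give it in full. Assume $R$ is left perfect, i.e.\ every flat module is cotorsion. Given any family $\{M_i\}_{i\in I}$ of flat cotorsion modules, the direct sum $\bigoplus_{i\in I} M_i$ is flat, since the class of flat modules is closed under direct sums; being flat, it is cotorsion by hypothesis, hence flat cotorsion. Thus $(_{R}Flat, Pure)$ is noetherian and so, by Theorem \ref{equiv}, $(_{R}Flat, \leq_{Pure})$ is superstable. (Alternatively one may phrase this through Proposition \ref{add-lim}, but the direct-sum computation is shortest.)

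For the converse I must show that closure of the flat cotorsion modules under direct sums forces every flat module to be cotorsion. This is the one genuinely non-model-theoretic step, and I expect it to be the main obstacle: noetherianity only gives information about \emph{sums} of already-cotorsion flat modules, whereas left perfectness is a statement about \emph{all} flat modules, so one cannot simply unwind the definitions. I would handle it by invoking the established algebraic characterisation of left perfect rings in terms of the flat cotorsion modules, which rests on the flat-cover and cotorsion-envelope machinery and is the content underlying the perfect-ring case of \cite[3.15]{m3}; the contrapositive amounts to manufacturing, from a flat non-cotorsion witness over a non-perfect ring, a direct sum of flat cotorsion modules that fails to be cotorsion. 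In the write-up I would cite this equivalence directly, in the terse style of Corollary \ref{rd-ring}, rather than reproduce the flat-cover argument.
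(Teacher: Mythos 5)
Your proposal is correct and follows the paper's skeleton: both reduce the corollary via Theorem \ref{equiv} to the statement that $(_{R}Flat, Pure)$ is noetherian, and your backward direction (direct sums of flat modules are flat, hence cotorsion by perfectness) is exactly the paper's ``clear'' direction. The forward direction is where you genuinely diverge. The paper's whole proof of that direction is one cited fact: $\Sigma$-cotorsion modules are closed under pure submodules \cite{sast}. Unwound, the argument is internal to the paper: given a flat module $F$, purely embed it into a flat cotorsion module $E$ using the paper's own enough-$\km$-injectives corollary in Section 4 (so no envelope machinery is needed); noetherianity makes $E^{(I)}$ cotorsion for all $I$, i.e.\ $E$ is $\Sigma$-cotorsion; then \cite{sast} hands cotorsion down to the pure submodule $F$, so every flat module is cotorsion and $R$ is left perfect. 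You instead outsource the equivalence ``flat cotorsion closed under direct sums iff left perfect'' to the literature, resting on flat-cover and cotorsion-envelope machinery (Xu's theorem that the cotorsion envelope of a flat module is flat and the inclusion is pure, see \cite{xu}, plus in effect the same \cite{sast} closure fact). That is legitimate, but one caution: the perfect-ring case of \cite[3.15]{m3} is itself proved by precisely the superstability-and-limit-models method that the present corollary generalizes, so citing ``the content underlying'' it risks a circular flavor; you should cite the purely algebraic equivalence (e.g.\ via \cite{xu} and \cite{sast}) rather than the superstability statement. What each approach buys: yours keeps the corollary a two-line citation in the style of Corollary \ref{rd-ring}, at the cost of heavier external algebra; the paper's stays self-contained modulo the single definability theorem of \cite{sast} and needs no cotorsion envelopes, which is why its proof is shorter than your sketch suggests.
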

\begin{proof}
The backward direction is clear and the forward direction follows from the fact that $\Sigma$-cotorsion modules are closed under pure submodules by \cite{sast}. 
\end{proof}

\subsection{Another equivalent condition} We begin by recalling the notion of a $\lambda$-pure embedding. The definition will mention that some objects are $\lambda$-presentable. This is a notion of size close to cardinality.  See \cite[\S 1 and \S 2]{AR} for more details.

\begin{defin} Let $\K$ be a category and $\lambda$ be a regular cardinal. A morphism $f:A \to B$ is a $\lambda$-pure embedding provided that in each commutative square:

$$\begin{tikzcd}
A' \arrow[d, "u"'] \arrow[r, "f'"] & B' \arrow[d, "v"] \\
A \arrow[r, "f"]                   & B                
\end{tikzcd}$$ with $A'$ and $B'$ $\lambda$-presentable, there is $g: B' \to A$ such that $g \circ f' = u$. 
\end{defin}

In the category of $R$-modules with $R$-homomorphism an embedding is an $\aleph_0$-pure embedding if and only if it is a pure embedding (see for example \cite[2.1.7]{prest09}).

\begin{theorem}
Assume $(K, \M)$ satisfies Hypothesis \ref{hyp2}. The following are equivalent.
\begin{enumerate}
\item $(K, \M)$ is noetherian.
\item $\K_{\M\text{-inj}} = (\km\text{-injective modules}, \leq_\M)$ is an abstract elementary class.
\end{enumerate}
\end{theorem}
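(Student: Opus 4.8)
The plan is to route both implications through Proposition \ref{add-lim}, which recasts noetherianness as the statement that the $\km$-injective modules are closed under directed colimits in $\km$. Since the Tarski--Vaught axioms of an AEC are, in essence, a closure-under-directed-colimits requirement, this proposition is the natural bridge, and the rest of the AEC axioms for the subclass will either be inherited from $\km$ (Lemma \ref{aec}) or handled by a Löwenheim--Skolem argument.

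The implication (2) $\Rightarrow$ (1) I expect to be routine. Every AEC is closed under directed colimits of its $\leq_\M$-embeddings, and in an AEC such colimits are concrete, i.e.\ computed as unions of the underlying modules. Hence any directed system of $\km$-injective modules linked by $\M$-embeddings has its colimit, computed in $\km$, again $\km$-injective; this is exactly closure of the $\km$-injectives under directed colimits in $\km$, so $(K,\M)$ is noetherian by Proposition \ref{add-lim}.

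For (1) $\Rightarrow$ (2) I would verify the AEC axioms for $\K_{\M\text{-inj}}=(\km\text{-injective modules},\leq_\M)$ one at a time. Closure under isomorphism, the fact that $\leq_\M$ is a partial order refining the substructure relation, and the coherence property are all inherited from $\km$ by Lemma \ref{aec}. The Tarski--Vaught axioms follow from Proposition \ref{add-lim}: assuming noetherianness, the union of a $\leq_\M$-increasing chain of $\km$-injectives is a directed colimit in $\km$, hence $\km$-injective, while the order-theoretic content (that the union is an upper bound and that a chain lying below a $\km$-injective $N$ has union $\leq_\M N$) is inherited from $\km$.

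The main obstacle is the Löwenheim--Skolem--Tarski axiom, precisely because $\km$-injective envelopes need not exist (the same difficulty flagged in Lemma \ref{csb}); one must produce, for a $\km$-injective $M$ and $A\subseteq|M|$, a \emph{small} $\km$-injective $N$ with $A\subseteq|N|\leq_\M M$. Here I would exploit the Baer-like criterion of Theorem \ref{s-inj}: it shows that being $\km$-injective is equivalent to injectivity with respect to the \emph{set} $\mathcal I$ of isomorphism representatives of $\M$-embeddings between modules of cardinality $\leq\text{card}(R)+\aleph_0$. Thus $\K_{\M\text{-inj}}$ is an injectivity class for a set of morphisms inside the accessible category $\km$, hence accessible and accessibly embedded; combined with closure under directed colimits and the fact that all morphisms are monomorphisms, this yields a Löwenheim--Skolem number and identifies $\K_{\M\text{-inj}}$ as an AEC. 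Concretely, the bound can instead be produced by hand: build $N=\bigcup_{i<(\text{card}(R)+\aleph_0)^+}N_i$ as a continuous chain of $\leq_\M$-submodels of $M$, where at successor stages one closes under the Löwenheim--Skolem--Tarski axiom of $\km$ and then, for each $f\colon A'\to B'$ in $\mathcal I$ and each $g\colon A'\to N_i$, solves the lifting problem using $\km$-injectivity of $M$ and throws the image of the resulting $h\colon B'\to M$ into $N_{i+1}$; a regular index length above $\text{card}(R)+\aleph_0$ guarantees that every such $g$ into the final $N$ is captured at some stage, so $N$ satisfies the Baer criterion and is $\km$-injective. The delicate point in this hands-on route is the cardinal bookkeeping for the successor step, which is exactly what the accessibility of the injectivity class resolves uniformly, so I would present the accessible-category argument as the primary one.
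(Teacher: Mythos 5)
Your overall architecture matches the paper's: the routine direction and the Tarski--Vaught axioms are handled exactly as in the paper via Proposition \ref{add-lim}, and you correctly identify Theorem \ref{s-inj} as the key to the L\"{o}wenheim--Skolem--Tarski axiom and accessibility as the right framework. But your primary argument for that axiom has a genuine gap at its pivotal step: the claim that $\K_{\M\text{-inj}}$ ``is an injectivity class for a set of morphisms inside the accessible category $\km$, hence accessible and accessibly embedded.'' First, $\K_{\M\text{-inj}}$ is not an injectivity class \emph{in} $\km$: the test maps $g \colon A \to E$ and the fillers $h \colon B \to E$ in the definition of $\km$-injectivity are arbitrary $R$-homomorphisms, which are not morphisms of $\km$ (whose morphisms are $\M$-embeddings). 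Second, in the correct ambient category $(K, {}_R\mathrm{Morphisms})$, the standard theorem that small-injectivity classes are accessible and accessibly embedded (\cite[4.8]{AR}) requires a \emph{locally presentable} ambient category, and $(K, {}_R\mathrm{Morphisms})$ is in general only accessible, not cocomplete --- e.g.\ for $K = {}_R\mathrm{Flat}$ cokernels fail to exist in $K$ --- so there is no off-the-shelf result to cite. The paper closes exactly this hole by a concrete argument you are missing: it shows $\K_{\M\text{-inj}}$ is closed in $\km$ under $(\text{card}(R)+\aleph_0)^+$-pure subobjects (a diagram chase factoring a $(\text{card}(R)+\aleph_0)^+$-pure embedding $h\colon M \to N$ through small $M_0 \leq_\M M$, $N_0 \leq_\M N$ and using Theorem \ref{s-inj}), invokes \cite[2.36]{AR} for accessibility (with Proposition \ref{add-lim} supplying the accessible embedding), and then uses \cite[2.19]{AR} together with the argument of \cite[4.5]{mu} and the fact that in $\km$ presentability rank equals cardinality to extract an actual L\"{o}wenheim--Skolem number. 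Your route could likely be repaired --- for instance by running a small object argument in $(K, {}_R\mathrm{Morphisms})$ using the pushouts along $\M$-embeddings from Lemma \ref{push} to get weak reflections into the injectives --- but as written the accessibility assertion is unsupported.

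The fallback ``hands-on'' construction cannot rescue this, for the reason you yourself flag: at each successor stage the number of lifting problems $g \colon A' \to N_i$ is bounded only by $\|N_i\|^{\text{card}(R)+\aleph_0}$, so the construction keeps the chain small only at cardinals $\lambda$ with $\lambda^{\text{card}(R)+\aleph_0} = \lambda$. That yields closure under small strong submodels at a closed class of cardinals, which is strictly weaker than the L\"{o}wenheim--Skolem--Tarski axiom (which demands the bound $|A| + \mathrm{LS}$ for \emph{every} subset $A$, including those of singular cardinality of small cofinality). Since you explicitly defer the bookkeeping to the accessibility argument, and that argument is the unjustified step, the proposal as a whole does not yet constitute a proof of (1) $\Rightarrow$ (2); the direction (2) $\Rightarrow$ (1) and the Tarski--Vaught part are fine and agree with the paper.
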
 
\begin{proof}
The backward direction is clear as split embeddings are contained in $\M$ and finite direct sums of $\km$-injective modules are $\km$-injective modules.

We show the forward direction. The only two axioms that require an argument are the L\"{o}wenheim-Skolem-Tarski axiom and the Tarski-Vaught axioms. The Tarski-Vaught axioms follow from the assumption that $(K, \M)$ is noetherian and Proposition \ref{add-lim}. So we show the  L\"{o}wenheim-Skolem-Tarski axiom 

 We associate to $\K_{\M\text{-inj}}$ a category which objects are the $\km\text{-injective modules}$ and which arrows are the $\cm$-embeddings. For simplicity we will denote this category by $\K_{\M\text{-inj}} $. We do the same for $\km$.

\underline{Claim}: $\K_{\M\text{-inj}}$ is an accessible category.

\underline{Proof of Claim}: Observe that $\km$ is an accessible category by \cite[4.3]{mu}. Since $(K, \M)$ is noetherian, $\K_{\M\text{-inj}}$ is closed under directed colimits in $\km$ by Proposition \ref{add-lim}. Moreover, $\K_{\M\text{-inj}}$  is a full subcategory of $\km$, hence $\K_{\M\text{-inj}}$ is accessibly embedded into $\km$. To show that $\K_{\M\text{-inj}}$ is an accessible category it is enough to show that  $\K_{\M\text{-inj}}$ is closed in $\km$ under $(\text{card}(R)+\aleph_0)^+$-pure subobjects by \cite[2.36]{AR}.

Let $h:M\to N$ be a $(\text{card}(R)+\aleph_0)^+$-pure embedding in $\km$  with $N$ a $\km$-injective module. We use Theorem \ref{s-inj} to show that $M$ is $\km$-injective. Let $ A  \leq_\M B$ with $A, B \in K$, $\|A\|, \| B \| \leq  \text{card}(R) + \aleph_0$ and $u: A \to M$ be a $R$-homomorphism. 

Observe that $h \circ u: A \to N$ is a $R$-homomorphism, so there is $v:  B \to N$ a $R$-homomorphism such that $h \circ u  = v\rest_A$ because $N$ is $\km$-injective.  As $\|A\| \leq  \text{card}(R) + \aleph_0$, using the L\"{o}wenheim-Skolem-Tarski axiom in $\km$ there is $M_0 \leq_\M M$ such that $u[A] \subseteq |M_0|$ and $\| M_0 \| \leq \text{card}(R) + \aleph_0$. So $u: A \to M_0$ is  a $R$-homomorphism. Doing a similar argument in $N$ there is $N_0 \leq_\M N$ such that $h_0=h\rest_{M_0}: M_0 \to N_0$ and $v: B \to N_0$ are $R$-homomorphisms. So we get the following commutative diagram:

$$\begin{tikzcd}
A \arrow[r, "i_A"] \arrow[d, "u"']         & B \arrow[d, "v"]               \\
M_0 \arrow[r, "h_0"] \arrow[d, "i_{M_0}"'] & N_0 \arrow[d, "i_{N_0}"] \\
M \arrow[r, "h"]                                 & N                             
\end{tikzcd}$$ where all the inclusions are $\km$-embeddings. Observe that $h_0: M_0 \to N_0$ is a $\km$-embedding as $h: M \to N$ is a $\km$-embedding and  $M_0 \leq_\M M$. Moreover, $M_0$ and $N_0$ are $(\text{card}(R)+\aleph_0)^+$-presentable in $\km$ as $\| M_0 \|, \| N_0\| \leq \text{card}(R) + \aleph_0$ and by \cite[4.2]{mu}. As $h: M \to N$ is a $(\text{card}(R)+\aleph_0)^+$-pure embedding, there is $k: N_0 \to M$ such that $k \circ h_0 = i_{M_0}$. 

Let $t:= k \circ v: B \to M$. Using the top square of the above diagram and that $k \circ h_0 = i_{M_0}$, it follows that $u = t\rest_A$. Therefore, $ M$ is $\km$-injective. $\dagger_{\text{Claim}}$

Since $\K_{\M\text{-inj}}$ and $\km$ are accessible categories and $\K_{\M\text{-inj}}$ is accessibly embedded into $\km$, it follows from \cite[2.19]{AR} that there is a cardinal $\lambda \geq \LS(\km)$ such that $\K_{\M\text{-inj}}$ and $\km$ are  $\lambda^+$-accessible categories and $\lambda^+$-presentable objects are preserved between $\K_{\M\text{-inj}}$ and $\km$. Then the same argument as that given for the L\"{o}wenheim-Skolem-Tarski axiom in \cite[4.5]{mu} can be used to show that $\LS(\K_{\M\text{-inj}}) \leq \lambda$. The argument presented in \cite[4.5]{mu} can be carried out here as for every cardinal $\theta \geq \LS(\km)$, $M$ is  $\theta^+$-presentable in $\km$ if and only if $\| M \| \leq \theta$ since $\km$ is an AEC and by \cite[4.2]{mu}. \end{proof}

\begin{remark}
The above equivalence was noticed for injective modules and pure injective modules in \cite[3.24, 3.27]{maz2}.
\end{remark} 

\subsection{Extending the framework} The main result of this section (Theorem \ref{equiv}) shows that there is a deep connection between noetherian categories and superstable AECs. In this subsection we give a weaker set of assumptions under which one can prove the equivalence. Since the proofs are the same as those of the previous subsection by replacing $(\text{card}(R) + \aleph_0)^+$ by  $\lambda_{\km}$ we do not provide any details. 

\begin{hypothesis}\label{e-h}
Let $\K_\M=(K, \leq_\M)$ be an AEC with $K \subseteq R\text{-Mod}$ for $R$ a fixed ring $R$ such that:
\begin{enumerate}
\item  $K$ is closed under direct sums and direct summands.
\item Split monomorphims are contained in $\M$, i.e.,  if $A \in K$ is a direct summand of $B\in K$, then $A \leq_\M B$.
\item If $A \leq_\M C$ and $B \leq_\M D$, then $A \oplus B \leq_\M C \oplus D$.
\item $\K_\M$ has the amalgamation property.
\item  $\K_\M$ is tame and stable.\label{h-sta}
\item There is a regular cardinal $\lambda \geq \LS(\K_\M)$ such that if $M$ is $\lambda$-saturated, then $M$ is $\K_\M$-injective.  We write $\lambda_{\km}$ for the smallest such cardinal.\label{inje}

\end{enumerate}
\end{hypothesis}

\begin{example}\label{exam}\
\begin{enumerate}
\item If  $(K, \M)$ satisfies Hypothesis \ref{hyp2}, then $(K, \leq_\M)$ satisfies Hypothesis \ref{e-h}. $(K, \leq_\M)$ is an AEC by Lemma \ref{aec}.  Conditions (1) to (3) are Proposition \ref{v-basic}, Condition (4) is Proposition \ref{ap}, Condition (5) is Theorem \ref{statf} and Lemma \ref{tame} and Condition (6) is Corollary \ref{sat-inj}.


\item  $(K, \leq_{Pure})$  where $K\subseteq R\text{-Mod}$, $K$ is closed under direct sums, direct summands and pure injective envelopes and $(K, \leq_{Pure})$ is an AEC.  Among the classes satisfying these hypothesis are: all modules, absolutely pure modules, locally injective modules, and locally pure injective modules.\footnote{In \cite[3.3]{maz2} it is explained why these classes satisfy  the hypothesis.} Conditions (1) to (3) are clear, Condition (4) is \cite[3.5]{maz2}, Condition (5) is \cite[3.8, 3.10]{maz2} and Condition (6) is basically \cite[3.13]{maz2}.




\end{enumerate}
\end{example}

\begin{remark}
The classes of Example (1) above are not contained in the classes of Example (2) and vice versa. For instance the class of flat modules is not closed under pure injective envelopes and the class of absolutely pure modules is not closed under pure epimorphic images. 

\end{remark}

We record the equivalence between superstable AECs and noetherian categories.

\begin{theorem}\label{equiv2}  Assume $\km$ satisfies Hypothesis \ref{e-h}. The following are equivalent.
\begin{enumerate}
\item $(K, \M)$ is noetherian.
\item $\K_\M$ is superstable.
\end{enumerate}
\end{theorem}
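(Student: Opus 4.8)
The plan is to follow the proof of Theorem \ref{equiv} essentially verbatim, systematically replacing the cardinal $(\text{card}(R) + \aleph_0)^+$ by $\lambda_{\km}$ and invoking the abstract closure properties of Hypothesis \ref{e-h} in place of the concrete module-theoretic facts used before. First I would check that the structural preliminaries still hold: conditions (1)--(3) of Hypothesis \ref{e-h} supply direct-sum and direct-summand closure, the containment of split embeddings in $\M$, and the compatibility of $\oplus$ with $\leq_\M$, which together are exactly the content of Proposition \ref{v-basic}. Combined with amalgamation (condition (4)), these yield the joint embedding property and no maximal models by embedding into direct sums, precisely as in Proposition \ref{ap}. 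Hence $\km$ is an AEC with amalgamation, joint embedding and no maximal models, so Fact \ref{sat-lim} remains available, while condition (5) stands in for Theorem \ref{statf} and Lemma \ref{tame}.

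Next I would re-establish the supporting lemmas. The crucial replacement is that condition (6) --- $\lambda_{\km}$-saturated models are $\km$-injective --- is exactly the conclusion of Corollary \ref{sat-inj}, which is the single place where the original development invoked the Baer-like criterion (Theorem \ref{s-inj}) through Lemma \ref{sat}. Taking condition (6) as a hypothesis, Proposition \ref{lim-inj} transfers unchanged: a $(\lambda,\kappa)$-limit model with $\kappa \geq \lambda_{\km}$ regular is $\kappa$-saturated by Fact \ref{sat-lim}, hence $\km$-injective. Lemma \ref{uni+} uses only injectivity (splitting) and the closure properties (1)--(3), so it carries over directly; Lemma \ref{csb} uses only noetherianness and closure of $\km$-injectives under direct summands; and Lemma \ref{sum-lim} together with Corollary \ref{omega} combine these with Proposition \ref{lim-inj}, so all of them require nothing beyond the cardinal substitution.

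With these lemmas in place, both implications run as before. For (1) $\Rightarrow$ (2), I would let $\lambda_0$ be the least $\lambda \geq \lambda_{\km}$ for which $\km$ is $\lambda$-stable (existing by condition (5)); the first claim shows $\lambda$-stability for all $\lambda \geq \lambda_0$ by building a universal model as a direct sum of smaller limit models, which is $\km$-injective by noetherianness, and then applying Corollary \ref{omega}, while the second claim deduces uniqueness of $\lambda$-limit models from Lemma \ref{sum-lim} and Lemma \ref{csb}. For (2) $\Rightarrow$ (1), given $\km$-injectives $\{M_\alpha : \alpha < \kappa\}$ one passes to a limit model $N$ of suitably large cardinality $\lambda$ at which uniqueness of limit models holds, writes $N \cong M_\alpha \oplus N_\alpha$ using universality (Remark \ref{uni-lim}) and injectivity, and concludes that $\bigoplus_{\alpha < \kappa} M_\alpha$ is a direct summand of the $\km$-injective model $N^{(\kappa)}$, hence itself $\km$-injective.

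The hard part is conceptual rather than computational. In the original proof the genuine difficulty was showing that long limit models are $\km$-injective without a syntactic characterization of injectivity, which was overcome via the Baer-like criterion of Section 4; here that difficulty has been absorbed into condition (6) of Hypothesis \ref{e-h}. Thus the real task is to verify that every remaining step depends only on the abstract closure and saturation properties recorded in Hypothesis \ref{e-h}, and never on the particular structure of $(K,\M)$ coming from Hypothesis \ref{hyp2} --- in particular that amalgamation, joint embedding and no maximal models are genuinely recoverable from conditions (1)--(4), so that Fact \ref{sat-lim} and the limit-model machinery of Section 5 apply without change.
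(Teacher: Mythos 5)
Your proposal is correct and coincides with the paper's own proof: the authors explicitly state that Theorem \ref{equiv2} is proved ``exactly the same'' as Theorem \ref{equiv} with $(\text{card}(R)+\aleph_0)^+$ replaced by $\lambda_{\km}$, which is precisely the substitution you carry out, and your verification that each supporting lemma (Propositions \ref{v-basic}, \ref{ap}, \ref{lim-inj}, Lemmas \ref{uni+}, \ref{csb}, \ref{sum-lim}, Corollary \ref{omega}) rests only on conditions (1)--(6) of Hypothesis \ref{e-h} --- with condition (6) absorbing the role previously played by the Baer-like criterion via Corollary \ref{sat-inj} --- is exactly the intended justification.
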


\begin{remark}
With some additional work it is possible to use the previous theorem  to give other characterizations of noetherian rings and pure semisimple rings via superstability of certain classes of modules. Since this has been done in a previous paper \cite[\S 3]{maz2} we will not go into it in this paper. 
\end{remark} 

The previous theorem shows that there is a deep connection between superstable AECs and  noetherian categories. A natural  problem is to determine how far this equivalence can be pushed.

\end{document}